\documentclass[11pt]{amsart}
\usepackage[margin=1in]{geometry}
\usepackage{graphicx} 
\usepackage{amsmath}
\usepackage{caption}
\usepackage{subfigure}
\usepackage{amsfonts}
\usepackage{dsfont}
\usepackage{amsthm}
\usepackage{geometry}
\usepackage{pgfplots}
\pgfplotsset{compat=1.18} 
\usepackage{color}
\usepackage{amsthm}
\usepackage{comment}
\usepackage[utf8]{inputenc}
\usepackage{hyperref}
\usepackage{bbm}
\usepackage{amssymb}

\newtheorem{theorem}{Theorem}[section]
\newtheorem{definition}{Definition}[section]
\newtheorem{lemma}{Lemma}[section]
\newtheorem{proposition}{Proposition}[section]

\newcommand{\Poi}{\operatorname{Poisson}}

\newcommand{\Var}{\operatorname{Var}}
\newcommand{\Cov}{\operatorname{Cov}}
\newcommand{\1}{\mathds{1}}
\newcommand{\Prob}{\mathbb{P}}
\newcommand{\Ex}{\mathbb{E}}
\title[Statistics of random-to-top shuffles]{On the statistics of random-to-top shuffles}

\author{Alexander Clay}
\date{\today}
\address{Department of Mathematics, University of Southern California (USC), Los Angeles, CA, USA.}
\email{ajclay@usc.edu}

\begin{document}
\begin{abstract}
    We prove limit theorems for the number of fixed points, descents, and inversions of iterated random-to-top shuffles in two asymptotic regimes. Our proofs are analytic, and they utilize new combinatorial decompositions that represent each statistic as a randomly indexed statistic of a uniformly random permutation. This perspective gives new combinatorial proofs of the expected number of fixed points and inversions. In particular, we solve an open problem of Pehlivan on fixed points, and we answer a question of Diaconis and Fulman on inversions. 
\end{abstract}
\maketitle

\section{Introduction}
\subsection{Background} 
What characteristics do large random permutations have? This question motivates many areas of research in probability. Large random permutations are studied using probabilistic, algebraic, and representation-theoretic techniques, with connections to mathematical physics and theoretical computer science. 

A common theme in this area is the study of probability measures on the symmetric group $S_n$, particularly those which arise through card shuffling models. An excellent survey of the mathematics of card shuffling is given in Diaconis and Fulman \cite{MathofShufflingCards}. This area has applications to Markov chains, ergodic theory, and group theory. 

One way to analyze the mixing of certain shuffles is through permutation statistics. These are numerical features of permutations. Establishing limit theorems for statistics as the size of the deck grows is a central goal in this area. From a historical standpoint, permutation statistics is a classical subject, and it has seen much recent interest in the context of card shuffling. A wonderful exposition of the work in this area can be found in both Section 4 and Section 8.4 of Diaconis and Fulman \cite{MathofShufflingCards}.

The analysis of mixing times is another interesting area of research. This subject, which originated in the 1980s, draws on Markov chain theory and the representations of the symmetric group. More specifically, the mixing time of a shuffle is equal to the number of times it must be performed before the distribution of the cards is close to uniform in a suitable metric. There are many interesting results in this area such as the ``seven shuffles theorem" of Bayer and Diaconis \cite{BayerDiaconisRiffleShuffle} for riffle shuffles, which are a common way to model how humans shuffle cards.

The connection between statistics and mixing times is explored by two questions. Suppose we are given a shuffle and a statistic. 
\begin{enumerate}
    \item How many iterations of the shuffle are necessary for the statistic to have the same asymptotic distribution as it would under a uniformly random permutation?
    \item Are there nontrivial limiting distributions of the statistic after smaller (critical) numbers of shuffles?
\end{enumerate}  

In this paper, we study the fixed points, descents, and inversions of iterated random-to-top shuffles by addressing both of these questions. Random-to-top shuffling is a classical model of great interest. Despite its importance, the statistics of random-to-top shuffles have resisted detailed analysis. Prior to our work, the limiting distributions of fixed points, descents, and inversions in any asymptotic regime were unknown. Some progress included Richard Stanley's generating function for the descents of top-to-random shuffles, as Diaconis and Fulman report \cite{MathofShufflingCards}. For random-to-top shuffling, the mixing time of the entire descent set has been studied \cite{AthDiaconis}. Meanwhile, the expected value and variance of the number of fixed points by algebraic and representation-theoretic methods \cite{Pehlivan}, and the expected number of inversions by spectral theory \cite{MathofShufflingCards} have been found. In particular, Pehlivan \cite{Pehlivan} asked for the limiting distribution of the number of fixed points in the critical regime which we study. Moreover, Diaconis and Fulman \cite{MathofShufflingCards} mentioned that there ``should be a nice theory of inversions" of random-to-top shuffles.

To motivate question (1), we note that from a computational standpoint, testing randomness via permutation statistics is more efficient than analyzing a full distribution on permutations. More precisely, we introduce the following notion. Consider a deck of $n$ cards. We say that a statistic mixes after $r=r(n)$ shuffles of the deck if, as $n\to\infty$, it has the same limiting distribution after $r=r(n)$ shuffles as it would under the uniform distribution on $S_n$. We prove that the number of fixed points, descents, and inversions mix in $r=\omega(n)$, $r=(n\log n)/2$, and $r=(n\log n)/4$ iterated random-to-top shuffles, respectively. Notably, these thresholds differ from the mixing time of random-to-top shuffling in the total variation metric, which is of order $r=n\log n$ \cite{ShufflingStopping} \cite{AnalysisTopToRandom}.

As for question (2), understanding critical limiting regimes illuminates phase transitions in the behavior of a statistic as the number of shuffles grows. We show that the number of fixed points, descents, and inversions of random-to-top shuffles each exhibit nontrivial limiting distributions in a regime where the number of shuffles is of order equal to the size of the deck. These distributions depend explicitly on the asymptotic ratio of the number of shuffles to the number of cards, which reveals phase changes in each distribution. 

We note that analogous limit theorems in critical regimes are known for the permutation statistics of other shuffles. Much is known about the statistics of riffle shuffles; for a nice exposition, see Section 4 of Diaconis and Fulman \cite{MathofShufflingCards}. Results for other types of shuffles include limit theorems for cycle counts of iterated transpositions and related shuffles \cite{Schramm} \cite{fulmanIcycles} \cite{arcona}, as well as statistics on conjugacy classes and subsets of the symmetric group \cite{KimLee} \cite{LiuYin} \cite{FulmanDescents1998} \cite{loth2023permutationstatisticsconjugacyclasses} \cite{descentsmatchings}. Our work contributes to this growing research program.

Briefly explaining our methods, we note that the main ingredient to our approach is a coupling argument which originated in the analysis of the mixing time of random-to-top shuffles by Aldous and Diaconis \cite{ShufflingStopping} and Diaconis, Fill, and Pitman \cite{AnalysisTopToRandom}. The authors in \cite{ShufflingStopping} and \cite{AnalysisTopToRandom} showed that after $r$ shuffles, the set of cards moved to the top forms a uniformly random subset whose order is also uniformly random. This insight allows us to derive distributional identities for each statistic which are crucial to the proofs of our results. We hope that these ideas will be applicable to other areas in combinatorial probability.

\subsection{Main Results}
We will now summarize our results. Let $n$ be the size of the deck and $r$ be the number of shuffles. For a constant $c>0$, we refer to the $r=cn$ regime as the ``critical" regime, and to any regime with $r\gtrsim f(n)$ as a mixed regime. Our first result shows that there is a Poisson-geometric convolution for the limiting distribution of the number of fixed points of random-to-top shuffles in the critical regime. In addition, it reveals that the mixed regime happens when the number of shuffles is asymptotically larger than the size of the deck.
\begin{theorem}
\label{fixed points main}
    Let $F_{r,n}$ be the number of fixed points after $r$ iterated random-to-top shuffles of an $n$-card deck, starting from the identity. Let $c>0$ be a constant. We have
    \[F_{cn,n}\Rightarrow_d\, X+Y\]
    where $X\sim\Poi(1-e^{-c})$ and $Y$ is geometric with parameter $1-e^{-c}$ and $P(Y=k)=(1-e^{-c})(e^{-ck})$ for all $k\geq 0$. Moreover, $X$ and $Y$ are independent. If $r=\omega(n)$, then
    \[F_{r,n}\Rightarrow_d\,\Poi(1).\]
\end{theorem}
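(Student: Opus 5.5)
The plan is to use the Aldous–Diaconis / Diaconis–Fill–Pitman coupling. After $r$ random-to-top shuffles starting from the identity, let $S\subseteq[n]$ be the set of distinct cards ever chosen, and let $K=|S|$. The cards of $S$ occupy positions $1,\dots,K$, ordered by most recent touch. The untouched cards occupy positions $K+1,\dots,n$ in their original relative order. Conditional on $K$, the set $S$ is a uniformly random $K$-subset of $[n]$, and the order of $S$ in the top $K$ positions is a uniformly random bijection, independent of $S$. I will split $F_{r,n}=A+B$, where $A$ counts fixed points among touched cards and $B$ counts fixed points among untouched cards.

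\textbf{Decomposing $B$.} An untouched card $j$ sits at position $K+1+\#\{\text{untouched } i<j\}=K+j-\#\{s\in S: s<j\}$. It is therefore fixed iff every touched label is less than $j$, i.e.\ iff $j>\max S$. Hence $B=n-\max S$, with $B=0$ if $K=n$. For $A$: card $i$ is fixed iff $i\in S\cap[K]$ and the uniform bijection $S\to[K]$ sends $i$ to $i$. So, conditional on $S$, $A$ is the number of fixed points of a uniform random bijection restricted to a designated set of size $M=|S\cap[K]|$. I will show its factorial moments are $\Ex[(A)_m\mid S]=(M)_m/(K)_m$, by the same computation as for ordinary fixed points. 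Moreover, conditional on $S$, $A$ depends only on the random ordering, so $A$ and $B$ are conditionally independent given $S$.

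\textbf{Critical regime $r=cn$.} By the standard coupon-collector second-moment computation, $K/n\to p:=1-e^{-c}$ in probability. Given $K$,
\[
\Prob(B\ge k\mid K)=\binom{n-k}{K}\Big/\binom{n}{K}\to (1-p)^k=e^{-ck},
\]
which gives the stated geometric law for $Y$. Next, $M=|S\cap[K]|$ is hypergeometric with mean $K^2/n\approx p^2 n$, so $M/K\to p$ in probability. I also need this convergence to hold conditionally on $\max S$. Conditioning on $\max S=n-k$ for fixed $k$ only alters the law of $S\cap[n-k]$ by removing one point, so the concentration of $M$ is unaffected. Then $\Ex[(A)_m\mid S]=(M)_m/(K)_m\to p^m$, the factorial moments of $\Poi(p)$. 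Combining conditional independence with the method of moments (or generating functions $\Ex[x^Ay^B]$) should give $A+B\Rightarrow X+Y$ with $X,Y$ independent.

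\textbf{Regime $r=\omega(n)$ and the main obstacle.} Here $\Prob(B\ge1)=\Prob(n\notin S)=(1-1/n)^r\to0$. Also $\Ex[n-K]=n(1-1/n)^r=o(n)$, so $M/K\ge 1-(n-K)/K\to1$ in probability, and the factorial moments of $A$ converge to $1$. This gives $F_{r,n}\Rightarrow\Poi(1)$. The main technical point is justifying the passage from conditional to unconditional limits. Since $(M)_m/(K)_m\le1$, bounded convergence handles the factorial moments. The joint limit with $B$ requires uniform control of the hypergeometric concentration of $M$ given $\max S$, and I would handle this with a direct variance bound.
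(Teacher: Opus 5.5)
Your proposal is correct, and it reaches the result by a different route from the paper. The decomposition is the same one: your touched set $S$ plays the role of $\{\pi(1),\dots,\pi(K_{r,n})\}$ in Theorem~\ref{decomposition of fixed points}, with $B=n-\max S$ and $A$ the fixed points in the top $K$ positions. The difference is in how the limit is extracted.

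\textbf{The paper's route.} It first freezes the cutoff at a deterministic value $\lfloor a_n n\rfloor$. It computes the exact probability mass function of $n-\max_{i\le \lfloor a_n n\rfloor}\pi(i)+\sum_{i\le\lfloor a_n n\rfloor}\1(\pi(i)=i)$ by conditioning on the maximum and using the inclusion--exclusion formula $Q(k,m,s)$, then passes to the limit term by term. A separate step, using an $L^1$ bound and the $\min/\max$ ratio lemma, shows that replacing $\lfloor a_n n\rfloor$ by $K_{cn,n}$ asymptotically changes neither the maximum nor the fixed-point count.

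\textbf{Your route.} You condition on all of $S$. This makes $B$ measurable and gives $A$ the explicit conditional factorial moments $(M)_m/(K)_m$, with $M=|S\cap[K]|$ hypergeometric. The random index is then absorbed by the concentration of $K/n$ and $M/K$, with no intermediate deterministic model. The same mechanism, $M/K\to1$ together with $\Prob(n\notin S)\to0$, handles $r=\omega(n)$.

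\textbf{Trade-off.} Your route buys brevity and makes it transparent why $X$ and $Y$ come out independent. The paper's route gives an exact finite-$n$ formula for the deterministic-cutoff model. It also sets up the random-versus-deterministic-index template that the paper reuses for descents and inversions.

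\textbf{The step you flag as the main obstacle is unnecessary.} You do not need to condition on $\max S$ or control concentration uniformly. For $x\in[0,1]$, since $A\le n$ is bounded,
$\Ex[x^A\mid S]=\sum_{m\ge0}\frac{(x-1)^m}{m!}\frac{(M)_m}{(K)_m}$.
Each ratio lies in $[0,1]$ and tends to $p^m$ in probability, so dominated convergence over $m$ gives $\Ex[x^A\mid S]\to e^{p(x-1)}$ in probability. Since $B$ is $S$-measurable and everything is bounded, for $y\in[0,1]$ you get $\Ex[x^Ay^B]=\Ex\bigl[y^B\,\Ex[x^A\mid S]\bigr]=e^{p(x-1)}\Ex[y^B]+o(1)$. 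This yields the independent Poisson--geometric limit directly.
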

Our second result exhibits asymptotic normality for the number of descents of iterated random-to-top in the critical regime, with the parameters depending on the ratio of the number of shuffles to the number of cards. Also, it indicates that the mixed regime occurs precisely at $(n\log n)/2 +c_n n$ shuffles, where $c_n\to\infty$. This complements the work of Athanasiadis and Diaconis \cite{AthDiaconis}, as they showed that the entire descent set of random-to-top shuffling mixes in $(n\log n)/2$ shuffles.
\begin{theorem}
\label{descents main}
    Let $D_{r,n}$ be the number of descents after $r$ iterated random-to-top shuffles of an $n$-card deck, starting from the identity. Let $c>0$ be a constant. We have
    \[\frac{D_{cn,n}-n(1-e^{-c})/2}{\sqrt{n}}\Rightarrow_d\,\mathcal{N}\left(0,\frac{1+2e^{-c}-3(1+c)e^{-2c}}{12}\right).\]
    If there exists a sequence $c_n\to\infty$ such that $r\gtrsim (n\log n)/2 + c_n n$, then
    \[\frac{D_{r,n}-n/2}{\sqrt{n}}\Rightarrow_d\,\mathcal{N}(0,1/12).\]
\end{theorem}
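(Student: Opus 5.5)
Everything rests on the coupling of Aldous--Diaconis and Diaconis--Fill--Pitman. After $r$ random-to-top shuffles of the identity, let $K$ be the number of distinct cards that have been moved to the top. The deck then consists of these $K$ cards on top, in uniformly random order and forming a uniformly random $K$-subset of $\{1,\dots,n\}$. Below them sit the remaining $n-K$ cards in increasing order, and conditionally on $K$ these two pieces are independent.

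\textbf{Descents.} I would count descents by where they occur. Inside the top block, the relative order is a uniform permutation of $K$ elements, so the number of descents there equals $D_K$, the descent count of a uniform element of $S_K$. Inside the bottom block the cards are increasing, so there are no descents. At the junction (positions $K$ and $K+1$) there is at most one descent. This gives the distributional identity
\[D_{r,n} \stackrel{d}{=} D'_{K} + \epsilon, \qquad \epsilon\in\{0,1\},\]
where $D'_K$ is a uniform-permutation descent count independent of $K$.

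\textbf{The random index $K$.} $K$ is the number of distinct coupons collected in $r$ uniform draws from $n$. For $r=cn$ it satisfies the CLT
\[\frac{K-n(1-e^{-c})}{\sqrt n}\Rightarrow \mathcal N(0,\sigma_K^2), \qquad \sigma_K^2=e^{-c}-(1+c)e^{-2c}.\]
This is classical, e.g.\ via the occupancy CLT or Poissonization.

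\textbf{Critical regime.} Write
\[D_{K}-\frac{n(1-e^{-c})}{2}=\Bigl(D_K-\frac{K-1}{2}\Bigr)+\frac{K-n(1-e^{-c})}{2}+O(1).\]
Conditionally on $K$, the first term over $\sqrt n$ is asymptotically $\mathcal N\bigl(0,\tfrac{K}{12n}\bigr)$, with $\tfrac{K}{12n}\to\tfrac{1-e^{-c}}{12}$. This follows from the uniform-permutation descent CLT, applied uniformly in $K$ since $K/n\to 1-e^{-c}$ in probability. The second term is independent of the first conditional fluctuation and is asymptotically $\mathcal N(0,\sigma_K^2/4)$.

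To combine them I would use characteristic functions. Condition on $K$, then use the uniform convergence of $\Ex[e^{it(D_K-(K-1)/2)/\sqrt K}]\to e^{-t^2/2}$ and dominated convergence. This makes the limit a sum of independent normals with variance
\[\frac{1-e^{-c}}{12}+\frac{e^{-c}-(1+c)e^{-2c}}{4}=\frac{1+2e^{-c}-3(1+c)e^{-2c}}{12},\]
which matches the statement.

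\textbf{Mixed regime.} Here I would show that $n-K=o_P(\sqrt n)$. We have $\Ex[n-K]=n(1-1/n)^r\le ne^{-r/n}$. For $r\ge(n\log n)/2+c_nn$ this gives $\Ex[n-K]\le \sqrt n\, e^{-c_n}=o(\sqrt n)$, so Markov's inequality yields $n-K=o_P(\sqrt n)$. Then
\[D_K-\frac{n}{2}=\Bigl(D_K-\frac{K-1}{2}\Bigr)-\frac{n-K}{2}+O(1),\]
and the same conditioning argument gives the $\mathcal N(0,1/12)$ limit, because $K/n\to1$.

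\textbf{Main obstacle.} The delicate step is justifying the conditional mixing: the descent CLT must hold uniformly over a random index concentrated near $\alpha n$. I would handle this by the characteristic-function argument restricted to the event $|K-\alpha n|\le n^{3/4}$, which has probability tending to one. Verifying the coupon-collector variance $\sigma_K^2$ is routine.
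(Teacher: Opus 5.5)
Your proof is correct, and its skeleton matches the paper's. Both use the same coupling decomposition (Theorem~\ref{decomposition of descents}). Both split the centered count into a conditional fluctuation $D_K-(K-1)/2$ plus the mean term $(K-n(1-e^{-c}))/2$; these are the paper's $A_n$ and $B_n$ in Theorem~\ref{descents technical}. Both use the same bound $\Ex[n-K]\le\sqrt n\,e^{-c_n}$ in the mixed regime.

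Where you differ is in how you remove the random index from the fluctuation term. The paper builds everything on one i.i.d.\ sequence $U_i$. It compares $A_n=\sum_{i<K_n}W_i$ with the deterministically indexed sum $f(a_n)=\sum_{i<\lfloor a_nn\rfloor}W_i$ through the exact second-moment identity $\Ex[(A_n-f(a_n))^2\mid K_n]=(|K_n-a_nn|+1)/12$. Because $f(a_n)$ is independent of $K_n$ by construction, the sum of independent normals follows from Lemma~\ref{slutsky ii}. The cost is that the paper relies on the one-dependent covariance structure and on $\Var K_n=O(n)$.

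Your conditional characteristic-function argument instead treats the descent CLT for $S_k$ as a black box. It needs only $K/n\to\alpha$ in probability and the CLT for $K$, so it would carry over unchanged to any statistic with a fixed-size CLT. Its cost is the uniform-on-compacts convergence of characteristic functions, which follows from tightness, and which you correctly flag as the delicate step.

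In the mixed regime the paper avoids rerunning any conditional CLT. It embeds the top block in the descents of a full uniform permutation and shows the tail $G_{r,n}=\sum_{i=K}^{n-1}\1(U_i>U_{i+1})$ is $o_P(\sqrt n)$. Your reuse of the critical-regime argument with $\alpha=1$ is equally valid.

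Two small slips:
\begin{itemize}
\item With normalization by $\sqrt K$, the limiting characteristic function is $e^{-t^2/24}$, not $e^{-t^2/2}$, since the variance is $1/12$. Your final variance computation is unaffected.
\item The two blocks of the deck are not conditionally independent, since their card sets are complementary. All you actually use is that, given $K$, the relative order of the top block is uniform on $S_K$.
\end{itemize}
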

Our last result concerns the asymptotic normality of the number of inversions of iterated random-to-top shuffles in the critical regime, and also it indicates a threshold for the mixing regime. It is similar to the result on descents.
\begin{theorem}
\label{inversions main}
    Let $I_{r,n}$ be the number of inversions after $r$ iterated random-to-top shuffles of an $n$-card deck, starting from the identity. Let $c>0$ be a constant. We have
    \[\frac{I_{cn,n}-n^2(1-e^{-2c})/4}{n^{3/2}}\Rightarrow_d\,\mathcal{N}\left(0,\frac{1+8e^{-3c}-9(1+c)e^{-4c}}{36}\right).\]
    If there exists a sequence $c_n\to\infty$ such that $r\gtrsim (n\log n)/4 + c_n n$, then
    \[\frac{I_{r,n}-n^2/4}{n^{3/2}}\Rightarrow_d\,\mathcal{N}(0,1/36).\]
\end{theorem}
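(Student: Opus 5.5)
The plan is to use the Aldous–Diaconis / Diaconis–Fill–Pitman description of random-to-top shuffling recalled in the introduction. After $r$ shuffles starting from the identity, let $K=K_{r,n}$ be the number of distinct cards that have been moved. This is the number of occupied boxes after throwing $r$ balls into $n$ boxes. Conditionally on $K$, the set $S$ of moved cards is a uniformly random $K$-subset of $[n]$, and it sits on top. The relative order of $S$ on top is an independent uniform permutation $\sigma\in S_K$. The remaining $n-K$ cards lie below in increasing order. Counting inversions by type (top–top, top–bottom, bottom–bottom) gives the exact decomposition
\[
I_{r,n}\;=\;\operatorname{inv}(\sigma)\;+\;C,\qquad C=\sum_{a\in S}\#\{b\notin S:\ b<a\}=\sum_{a\in S}a-\frac{K(K+1)}{2}.
\]
Bottom–bottom pairs contribute nothing. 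Conditionally on $K$, the two terms $\operatorname{inv}(\sigma)$ and $C$ are independent, and $C$ is a function of a uniform random $K$-subset only.

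Next I compute the moments conditionally on $K$.
\begin{itemize}
\item For the uniform permutation, $\Ex\operatorname{inv}(\sigma)=K(K-1)/4$ and $\Var\operatorname{inv}(\sigma)\sim K^3/36$.
\item The subset sum $\sum_{a\in S}a$ is a sample sum without replacement, so it has mean $K(n+1)/2$ and variance $K(n-K)(n+1)/12$.
\item Hence $\Ex[C\mid K]=K(n-K)/2$ and $\Var(C\mid K)\sim K(n-K)n/12$.
\end{itemize}
Therefore $\Ex[I\mid K]\approx g(K):=Kn/2-K^2/4$.

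For $r=cn$, set $p=1-e^{-c}$. The classical occupancy CLT gives $(K-pn)/\sqrt n\Rightarrow\mathcal N\bigl(0,\,e^{-c}-(1+c)e^{-2c}\bigr)$. Since $g'(pn)=n(1-p)/2$, the fluctuation of $K$ enters $I$ at scale $n^{3/2}$, and we also get $g(pn)=n^2(1-e^{-2c})/4$. Adding the three independent Gaussian contributions gives the variance $p^3/36+p(1-p)/12+e^{-2c}\bigl(e^{-c}-(1+c)e^{-2c}\bigr)/4$. A short computation shows this equals $\bigl(1+8e^{-3c}-9(1+c)e^{-4c}\bigr)/36$, as claimed.

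To make this rigorous I will work with characteristic functions. Write
\[
\frac{I-g(pn)}{n^{3/2}}=\frac{\operatorname{inv}(\sigma)-\Ex[\operatorname{inv}\sigma\mid K]}{n^{3/2}}+\frac{C-\Ex[C\mid K]}{n^{3/2}}+\frac{g(K)-g(pn)}{n^{3/2}}+o(1),
\]
where the $o(1)$ absorbs the lower-order difference between $\Ex[I\mid K]$ and $g(K)$.
\begin{itemize}
\item The conditional characteristic function of the first term converges to $e^{-t^2p^3/72}$, uniformly over $K$ in a window $|K-pn|\le n^{2/3}$. This uses the standard CLT for inversions of a uniform permutation, applied along any sequence $K_n/n\to p$.
\item The second term is handled the same way by the Hájek finite-population CLT for sums without replacement.
\item The third term is a deterministic function of $K$ and converges by the occupancy CLT together with the delta method.
\end{itemize}
Conditional independence factorizes the conditional characteristic function. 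The window has probability tending to one by Chebyshev, so dominated convergence finishes the critical case. I expect the main technical point to be this uniformity in $K$, which I will obtain by a subsequence/contradiction argument: if uniform convergence failed on the window, one would extract a sequence $K_n$ violating the pointwise CLTs above.

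For the mixed regime $r\gtrsim(n\log n)/4+c_nn$, let $M=n-K$. Then $\Ex M=n(1-1/n)^r\le n^{3/4}e^{-c_n}$, so $M=o_P(n^{3/4})$. In the decomposition, $g(K)=n^2/4-M^2/4=n^2/4+o_P(n^{3/2})$, and $\Var(C\mid K)\lesssim Mn^2=o_P(n^3)$, so $(C-\Ex[C\mid K])/n^{3/2}\to0$ in probability. Meanwhile $\operatorname{inv}(\sigma)$ with $K=n-o_P(n^{3/4})$ satisfies $(\operatorname{inv}\sigma-K(K-1)/4)/n^{3/2}\Rightarrow\mathcal N(0,1/36)$ by the same uniform-in-$K$ argument. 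Slutsky's lemma then gives the limit $\mathcal N(0,1/36)$.
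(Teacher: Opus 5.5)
Your proposal is correct, but it takes a different route from the paper.

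The paper (Theorem~\ref{decomposition of inversions}) resamples the deck from a uniform permutation and uses its Lehmer code. This gives $I_{r,n}=_d\sum_{i=1}^{K_{r,n}}R_i$, with the $R_i$ independent, uniform on $\{0,\ldots,n-i\}$, and independent of $K_{r,n}$. The critical case then comes from Theorem~\ref{general inversions}, in three steps:
\begin{enumerate}
\item write $T_n-\Ex T_n=A_n+B_n$ and replace the random index in $A_n$ by $\lfloor a_n n\rfloor$ via an $L^2$ bound;
\item apply Lindeberg--Feller to the resulting sum of independent summands;
\item treat $B_n$ by the exact quadratic expansion of $\mu_n(M)=M(2n-M-1)/4$ (your $g$ up to $-M/4$), using independence of $f(a_n)$ and $K_n$ to combine the two pieces.
\end{enumerate}

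Your top--top/top--bottom split reads off the shuffle structure directly. It also splits the conditional variance into interpretable parts, $p^3/36+p(1-p)/12=(1-(1-p)^3)/36$: the internal order of the moved cards, and the choice of which cards moved. The price is two distinct CLTs (inversions of a uniform permutation, and H\'ajek's finite-population CLT) plus your uniform-in-$K$ characteristic-function argument. The paper needs only one CLT for independent summands, and it obtains a result for any index sequence satisfying its assumptions (1)--(3), parallel to the descents case.

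In the mixed regime the two arguments are close. The paper uses $G_{r,n}=\sum_{i>K_{r,n}}R_i\ge 0$ with $\Ex G_{r,n}=n(n-1)(1-2/n)^r/4$. You use $\Ex(n-K)\le n^{3/4}e^{-c_n}$ to make $M^2/4$ and the conditional fluctuation of $C$ negligible. Both give the same $(n\log n)/4$ threshold.
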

The (group-theoretic) inverse of random-to-top shuffling is top-to-random shuffling \cite{MathofShufflingCards}. Therefore, Theorem $\ref{fixed points main}$ and Theorem $\ref{inversions main}$ also apply to the fixed points and inversions of top-to-random shuffles, respectively. Unfortunately, the limiting distributions of the number of descents of top-to-random shuffles are not well-understood via our techniques. This is because a permutation and its inverse do not share the same number of descents, in general. We include a conjecture about this topic in Section $\ref{Section 8}$.
\subsection{Outline}
In Section $\ref{Section2}$, we define the permutation statistics we work with and review the necessary background material. Section $\ref{Section3}$ develops reformulations of some classical results from occupancy problems which are important to our arguments. In Section $\ref{Section 4}$, we recall the connection between card shuffling and random permutations, and we examine the combinatorics of random-to-top shuffles, establishing equalities in distribution for each statistic. Sections $\ref{Section 5}$, $\ref{Section 6}$, and $\ref{Section 7}$ motivate and prove Theorem $\ref{fixed points main}$, Theorem $\ref{descents main}$, and Theorem $\ref{inversions main}$, respectively. Finally, Section $\ref{Section 8}$ discusses several open questions and directions for future work.

\subsection{Notation}
\label{notation}
We write $[n]$ to denote the integers $\{1,2,\ldots,n\}$. We write $(n)_j$ to denote the falling factorial $n(n-1)(n-2)\cdots(n-j+1)$. For functions $f$ and $g$, we say $f(n)=\omega(g(n))$ or $f(n)\gg g(n)$ whenever  $f(n)/g(n)\to\infty$. We say that $f(n)\gtrsim g(n)$ whenever $f(n)\geq g(n)$ for all large $n$. We denote by $g(n)=O(f(n))$ sequences such that $\lim\sup_n |g(n)/f(n)|<\infty$. Write $f(n)\sim g(n)$ whenever $f(n)/g(n)\to 1$. The indicator of an event $A$ will be denoted by $\1(A)$, and an indicator random variable is denoted analogously. Permutations $\pi$ are written in one-line form $\pi=\pi(1)\pi(2)\pi(3)\ldots\pi(n)$. We write $X_n=o_p(f(n))$ to denote a sequence of random variables $X_n$ such that $X_n/f(n)\to 0$ in probability, and $X_n=O_p(f(n))$ denotes a sequence of random variables such that $X_n/f(n)$ is bounded in probability. Let $\Rightarrow_d$ denote convergence in distribution.  Denote by $F_{r,n}$, $D_{r,n}$, and $I_{r,n}$ the number of fixed points, descents, and inversions, respectively, of an $n$-card deck given $r$ iterated random-to-top shuffles, started from the identity. We denote by $K_{r,n}$ the number of occupied bins when $r$ balls are thrown into $n$ bins, independently and uniformly at random. Let $\mathcal{N}(\mu,\sigma^2)$ denote a normal random variable with mean $\mu$ and variance $\sigma^2$. We write $X=_d\, Y$ to denote equality in distribution.

\section*{Acknowledgments}
The author would like to thank his advisor, Jason Fulman, for suggesting this topic and for his references. We would also like to thank Richard Stanley, Evgeni Dimitrov, Greta Panova, and Steven Heilman, for their kind advice, and Dominic Arcona, for good discussions.
\section{Permutation Statistics}
\label{Section2}
In this section, we recall classical results from permutation statistics that will be used in the proofs of our main theorems. Readers familiar with combinatorial probability may wish to proceed directly to Section $\ref{Section3}$.

First, we define the statistics which we will work with. A fixed point is an index $i$ such that $\pi(i)=i$. The permutation $\pi=25143$ has $1$ fixed point: $\pi(4)=4$. One can think of a fixed point as a card that is in its original position after shuffling.

A descent of a permutation $\pi$ is an index $i\in[n-1]$ such that $\pi(i)>\pi(i+1)$. For example, the permutation $\pi=25143$ has descents at positions $2$ and $4$. Descents are a classical tool to test randomness.

An inversion is a pair $(i,j)$ with $1\leq i<j\leq n$ and $\pi(i)>\pi(j)$. The permutation $\pi=25143$ has $5$ inversions: the pairs $(1,3)$, $(2,3)$, $(2,4)$, $(2,5)$, and $(4,5)$. Inversions test how out of order a permutation is.

Under the uniform distribution on permutations, the limiting distributions of the number of fixed points, descents, and inversions are well known, classical results. We state these results here for later use.
\begin{proposition}
\label{uniform case convergences}
    Let $\pi$ be a uniformly random permutation of $[n]$. Let $F(\pi)$ be the number of fixed points, $d(\pi)$ be the number of descents, and $I(\pi)$ be the number of inversions. Then, we have
    \begin{equation} 
    \label{ordinary fixed points}
    F(\pi)\Rightarrow_d\; \Poi(1),
    \end{equation}
    \begin{equation}
    \label{ordinary descents}
        \frac{d(\pi)-n/2}{\sqrt{n}}\Rightarrow_d\; \mathcal{N}(0,1/12),\;\;\;\; \text{and}
    \end{equation}
    \begin{equation}
    \label{ordinary inversions}
        \frac{I(\pi)-n^2 / 4}{n^{3/2}}\Rightarrow_d\; \mathcal{N}(0,1/36).
    \end{equation}
\end{proposition}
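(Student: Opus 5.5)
We treat the three limits in Proposition~\ref{uniform case convergences} separately. All three are classical. The plan below uses the method of moments for fixed points and independent-increment representations for descents and inversions.

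\emph{Fixed points.} Write $F(\pi)=\sum_{i=1}^n \1(\pi(i)=i)$. For each $k\ge 1$, the factorial moment $\Ex[(F(\pi))_k]$ counts ordered $k$-tuples of distinct fixed points. There are $(n)_k$ such tuples, and each set of $k$ prescribed positions is fixed with probability $(n-k)!/n!$. Hence $\Ex[(F(\pi))_k]=1$ for all $k\le n$. These are exactly the factorial moments of $\Poi(1)$. The Poisson law is determined by its moments, so the method of moments gives \eqref{ordinary fixed points}. Alternatively, inclusion--exclusion gives the exact derangement formula $P(F=j)=\frac{1}{j!}\sum_{i=0}^{n-j}(-1)^i/i!\to e^{-1}/j!$.

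\emph{Inversions.} Use the Lehmer code. For $j\in[n]$, let $Z_j=\#\{i<j:\pi(i)>\pi(j)\}$. The map $\pi\mapsto(Z_1,\dots,Z_n)$ is a bijection from $S_n$ onto $\prod_j\{0,\dots,j-1\}$. So under the uniform measure the $Z_j$ are independent, with $Z_j$ uniform on $\{0,\dots,j-1\}$. Then $I(\pi)=\sum_j Z_j$. From $\Ex Z_j=(j-1)/2$ and $\Var Z_j=(j^2-1)/12$ we get $\Ex I=n(n-1)/4$ and $\Var I=\sum_j (j^2-1)/12\sim n^3/36$. Each summand satisfies $|Z_j-\Ex Z_j|\le n=o(n^{3/2})$, so the Lindeberg condition holds trivially. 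The Lindeberg CLT then gives \eqref{ordinary inversions}, since $(n(n-1)/4-n^2/4)/n^{3/2}\to0$.

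\emph{Descents.} This is the step needing the most care, because the indicators $\1(\pi(i)>\pi(i+1))$ are dependent. Realize $\pi$ as the ranks of i.i.d.\ uniforms $U_1,\dots,U_n$ on $[0,1]$. Then $d(\pi)=\sum_{i=1}^{n-1}\1(U_i>U_{i+1})$ is a sum of a stationary $1$-dependent sequence. We have $\Ex d=(n-1)/2$. For the variance, adjacent pairs have $P(U_i>U_{i+1}>U_{i+2})=1/6$, so the covariance is $1/6-1/4=-1/12$. Hence
\[\Var d=(n-1)\tfrac14-2(n-2)\tfrac1{12}=\tfrac{n+1}{12}.\]
The CLT for bounded $m$-dependent stationary sequences (Hoeffding--Robbins) then gives \eqref{ordinary descents}. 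If one prefers a self-contained argument, the sum can be split into alternating long blocks separated by single indices; the blocks are independent, and the dropped terms contribute $o_p(\sqrt n)$. Alternatively, one can cite the Eulerian generating function $\sum_\pi x^{d(\pi)}$, whose roots are real and negative. By Harper's method, $d(\pi)$ is then a sum of independent Bernoullis, and Lindeberg applies directly since $\Var d\to\infty$.
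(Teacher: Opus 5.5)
Your proposal is correct and follows essentially the same route as the paper: inclusion--exclusion for fixed points (the paper uses the formula you list as an alternative to your factorial-moment argument), the $1$-dependent representation via i.i.d.\ uniforms with the Hoeffding--Robbins CLT for descents, and a decomposition into independent discrete uniforms with Lindeberg for inversions. Your left-to-right Lehmer code $Z_j$ is the mirror image of the paper's $R_i=\#\{k>i:\pi(i)>\pi(k)\}$, and your variance computations and centering adjustments (which the paper leaves implicit) are accurate.
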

\begin{proof}
    We will sketch the proofs, as similar techniques will be used in later sections. Throughout, we assume that $\pi$ is a uniformly random permutation of $[n]$.
    
    Let $F(\pi)$ be the number of fixed points. By inclusion–exclusion, one obtains
    \[\Prob(F(\pi)=k)=\frac{1}{k!}\sum_{j=0}^{n-k}\frac{(-1)^j}{j!}\to \frac{e^{-1}}{k!}.\]
    This implies the convergence in Equation $\eqref{ordinary fixed points}$.
    
    Let $d(\pi)$ be the number of descents. One has the decomposition
    \begin{equation}
        \label{decomposition of uniformly random descents}
        d(\pi)=\sum_{i=1}^{n-1} \1(\pi(i)>\pi(i+1))=_d\,\sum_{i=1}^{n-1}\1(U_{i}>U_{i+1})
    \end{equation}
    where the $(U_i)_{i\geq 1}^{n}$ are i.i.d., that is, independent and identically distributed, continuous uniform $(0,1)$ random variables. We note that the equality in distribution follows from the standard method of generating uniformly random permutations with the order statistics of i.i.d. continuous distributions. Let $X_i=\1(U_i>U_{i+1})$. It is straightforward to verify that the random variables $(X_i)_{i\geq 1}^{n-1}$ form a one-dependent sequence, i.e. when $|i-j|>1$, we have that $X_i$ and $X_j$ are independent. The central limit theorem for $m$-dependent random variables, as in \cite[Theorem 1]{m-dependentclt-hoeffding}, implies the convergence in Equation $\eqref{ordinary descents}$. 

    Finally, let $I(\pi)$ be the number of inversions. For each $i\in[n-1]$, set
    \begin{equation}
    \label{inversions uniform decomposition}
        R_i=\sum_{k=i}^n \1(\pi(i)>\pi(k)).
    \end{equation}
    One checks that 
    \[I(\pi)=_d\,\sum_{i=1}^{n} R_i\]
    and that the $(R_i)_{i\geq 1}^n$ are independent discrete uniform random variables, each supported on $\{0,1,2,$ $\ldots,n-i\}$. The convergence in Equation $\eqref{ordinary inversions}$ then follows by Lindeberg-Feller as in \cite[Theorem 3.4.10]{durrett2019probability}.
\end{proof}
We note that the results in Proposition $\ref{uniform case convergences}$ have been substantially refined over the years. Indeed, there is a large literature on the statistics of uniformly random permutations. It is beyond the scope of this paper to elaborate on the treasure trove of results. We refer the reader to Arratia and Tavar\'e \cite{ArratiaTavare1992}, Diaconis and Chatterjee \cite{ChatterjeeDiaconisDescents}, and Margolius \cite{Margolius}, for more recent studies into fixed points, descents, and inversions, respectively. More specifically, we mention that precise convergence rates to normality for each of the three statistics in Proposition $\ref{uniform case convergences}$ are known. In particular, Fulman \cite{FulmanDescents} uses Stein's method with an explicit connection to card shuffling to establish a convergence rate in Equation $\eqref{ordinary descents}$ of Proposition $\ref{uniform case convergences}$.

To conclude this section, we will state two important lemmas concerning convergence in distribution. These will eventually help us apply the results from Proposition $\ref{uniform case convergences}$ to iterated top-to-random shuffles. The following lemma is known as Slutsky's Theorem or the convergence-together lemma, and is found in \cite[Exercise 3.2.13]{durrett2019probability}. We leave out the proof.
\begin{theorem}[Slutsky] 
\label{slutsky}
Suppose $X_n$ and $Y_n$ are sequences of random variables with $X_n\Rightarrow_d X$ and $Y_n\Rightarrow_d a$, where $a$ is a constant random variable. Then, $X_n+Y_n\Rightarrow_d\, X+a$.
\end{theorem}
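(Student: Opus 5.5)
The most direct route is through characteristic functions and Lévy's continuity theorem; the statement is classical and the paper omits its proof. Write $\varphi_Z(t)=\Ex[e^{itZ}]$. By the continuity theorem it suffices to show that $\varphi_{X_n+Y_n}(t)\to e^{ita}\varphi_X(t)$ for every real $t$. Since $\varphi_{X+a}(t)=e^{ita}\varphi_X(t)$ when $a$ is a constant, this is exactly the claim $X_n+Y_n\Rightarrow_d X+a$.

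First I will upgrade $Y_n\Rightarrow_d a$ to $Y_n\to a$ in probability. Fix $\epsilon>0$. The closed set $C=\{y:|y-a|\geq\epsilon\}$ does not contain $a$. The portmanteau theorem then gives
\[\limsup_n \Prob(|Y_n-a|\geq\epsilon)\leq \Prob(a\in C)=0.\]

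Next I decompose
\[\varphi_{X_n+Y_n}(t)-e^{ita}\varphi_X(t)=\Ex\bigl[e^{itX_n}(e^{itY_n}-e^{ita})\bigr]+e^{ita}\bigl(\varphi_{X_n}(t)-\varphi_X(t)\bigr).\]
The second term tends to $0$ because $X_n\Rightarrow_d X$. For the first term, I use $|e^{itX_n}|=1$ and $|e^{iu}-e^{iv}|\leq\min(2,|u-v|)$. Splitting on the event $\{|Y_n-a|<\epsilon\}$ bounds the first term in absolute value by
\[|t|\epsilon+2\Prob(|Y_n-a|\geq\epsilon).\]
Letting $n\to\infty$ and then $\epsilon\to0$ shows the first term vanishes as well, which finishes the argument.

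The only point that needs any care is that $X_n$ and $Y_n$ may be dependent and need not live on a common space in any useful way. The decomposition above never uses independence, because the unimodular factor $e^{itX_n}$ is simply bounded by $1$. If one prefers to avoid characteristic functions, there is an alternative. For a bounded Lipschitz test function $f$, the bound $|\Ex f(X_n+Y_n)-\Ex f(X_n+a)|\leq L\epsilon+2\|f\|_\infty\Prob(|Y_n-a|\geq\epsilon)$ combined with $X_n+a\Rightarrow_d X+a$ gives the same conclusion, via the bounded-Lipschitz form of the portmanteau theorem.
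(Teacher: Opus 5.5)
Your proof is correct and complete. The paper itself gives no proof of this statement; it only cites Durrett, Exercise 3.2.13. So the useful comparison is with the standard elementary proof that citation points to. That exercise sits in Durrett's weak-convergence section, before characteristic functions are introduced.

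The textbook proof also first upgrades $Y_n\Rightarrow_d a$ to $Y_n\to a$ in probability. It then uses the distribution-function sandwich
\[\Prob(X_n+a\le x-\epsilon)-\Prob(|Y_n-a|>\epsilon)\le\Prob(X_n+Y_n\le x)\le\Prob(X_n+a\le x+\epsilon)+\Prob(|Y_n-a|>\epsilon),\]
lets $n\to\infty$ with $x\pm\epsilon$ continuity points of the law of $X+a$, and then sends $\epsilon\downarrow 0$.

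Your characteristic-function version uses the same two ingredients: convergence in probability, and splitting on $\{|Y_n-a|<\epsilon\}$. It trades the continuity-point bookkeeping for L\'evy's continuity theorem. That makes it cleaner and lets it extend to $\mathbb{R}^d$ unchanged, at the price of heavier machinery than the statement needs. Your bounded-Lipschitz alternative is the most economical of the three, since it needs only the portmanteau theorem and $X_n+a\Rightarrow_d X+a$.

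One wording point: for each fixed $n$, $X_n$ and $Y_n$ \emph{must} live on a common probability space for $X_n+Y_n$ to be defined. What is not assumed is anything about their joint law, and you are right that your argument never uses it.
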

The next lemma from \cite[Theorem 2.7]{AsymptoticStatistics} will allow us to compare randomly indexed and deterministically indexed models of permutation statistics. We omit the proof.
\begin{lemma}
\label{slutsky ii}
    Suppose $X_n$ and $Y_n$ are sequences of random variables such that $|X_n-Y_n|\to 0$ in probability and $X_n\Rightarrow_d X$. Then $Y_n\Rightarrow_d X$.
\end{lemma}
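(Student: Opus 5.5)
The quickest route is to reduce this to Slutsky's Theorem (Theorem \ref{slutsky}). Write
\[Y_n = X_n + (Y_n - X_n).\]
By hypothesis $X_n\Rightarrow_d X$. The assumption $|X_n-Y_n|\to 0$ in probability means $Y_n-X_n\to 0$ in probability, and convergence in probability to a constant implies convergence in distribution to that constant. Hence $Y_n - X_n \Rightarrow_d 0$. Theorem \ref{slutsky}, applied with $a=0$, then gives $Y_n\Rightarrow_d X+0 = X$.

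For a self-contained argument that does not rely on Slutsky's Theorem, I would work with distribution functions directly. Let $x$ be a continuity point of $F_X(t)=\Prob(X\le t)$ and fix $\varepsilon>0$. The event inclusions
\[\{Y_n\le x\}\subseteq \{X_n\le x+\varepsilon\}\cup\{|X_n-Y_n|>\varepsilon\}\]
and
\[\{X_n\le x-\varepsilon\}\subseteq \{Y_n\le x\}\cup\{|X_n-Y_n|>\varepsilon\}\]
give the two-sided bound
\[\Prob(X_n\le x-\varepsilon)-\Prob(|X_n-Y_n|>\varepsilon)\le \Prob(Y_n\le x)\le \Prob(X_n\le x+\varepsilon)+\Prob(|X_n-Y_n|>\varepsilon).\]
Suppose $x\pm\varepsilon$ are also continuity points of $F_X$. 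Taking $\limsup$ and $\liminf$ as $n\to\infty$, the probability term vanishes and the $X_n$ terms converge, so
\[F_X(x-\varepsilon)\le \liminf_n \Prob(Y_n\le x)\le \limsup_n \Prob(Y_n\le x)\le F_X(x+\varepsilon).\]

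The only delicate step is letting $\varepsilon\downarrow 0$ while keeping $x\pm\varepsilon$ at continuity points of $F_X$. This is possible because a monotone function has at most countably many discontinuities, so such $\varepsilon$ can be chosen arbitrarily small. Continuity of $F_X$ at $x$ then squeezes both bounds to $F_X(x)$, which proves $Y_n\Rightarrow_d X$.
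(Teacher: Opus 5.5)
Your proof is correct, and both routes work. The paper gives no proof of its own for this lemma; it simply cites van der Vaart \cite{AsymptoticStatistics}, Theorem 2.7(iv). So the only comparison available is with that source.

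There, for a bounded $L$-Lipschitz $f$ one bounds
\[|\Ex f(X_n)-\Ex f(Y_n)|\le \varepsilon L+2\|f\|_\infty\Prob(|X_n-Y_n|>\varepsilon)\]
and then invokes the portmanteau lemma. That argument works verbatim for random elements of any metric space. Your distribution-function sandwich is more elementary but specific to real-valued variables, which is all the paper ever needs.

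One caveat on your first route. Deducing the lemma from Theorem~\ref{slutsky} is fine within the paper, which takes Slutsky as given. However, in van der Vaart the logical dependence runs the other way: Slutsky's lemma is derived from Theorem 2.7, via part (v), whose proof uses part (iv), i.e.\ this very statement. So the reduction counts as a genuine proof only if Slutsky is established independently.

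Your second argument supplies exactly that independent proof. The two event inclusions are right. The countability of the discontinuity set of $F_X$ lets you choose $\varepsilon\downarrow 0$ with $x\pm\varepsilon$ both continuity points. Continuity of $F_X$ at $x$ then closes the squeeze.
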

This completes the background material needed for the remainder of the paper.

\section{Balls in Boxes}
\label{Section3}
In this section, we study the distribution of the random variable $K_{r,n}$, which counts the number of occupied boxes when $r$ balls are thrown independently and uniformly into $n$ boxes. Later in this paper, we will exploit our knowledge of the random variable $K_{r,n}$ together with the combinatorial decompositions in Section $\ref{Section 4}$ to prove our main results. The occupancy (or coupon collector) problem has been studied in great generality, and it is beyond the scope of this paper to survey the many results. A good reference in this area is Ferrante and Saltalamacchia \cite{Ferrante}. We will provide reformulations of a few classical results of Weiss \cite{Weiss1958Limiting}, who studied the number of \textit{un}occupied boxes in the same setup.
\begin{proposition}
\label{facts about krn}
    Let $K_{r,n}$ be the number of occupied boxes when $r$ balls are thrown into $n$ boxes uniformly at random. Then, we have
    \begin{enumerate}
        \item $\Ex[K_{r,n}]=n-n(1-1/n)^r$ 
        \item $\Var K_{r,n} = n(1-1/n)^r +n(n-1)(1-2/n)^r -n^2(1-1/n)^{2r}\;\;\;\\ \text{and}\;\;\;\Var K_{cn,n}\sim n(e^{-c}-(1+c)e^{-2c})$.
        \item We have
        \[\frac{K_{cn,n}- n(1-e^{-c})}{\sqrt{n(e^{-c}-(1+c)e^{-2c})}}\Rightarrow_d\,\mathcal{N}(0,1). \]
    \end{enumerate}
\end{proposition}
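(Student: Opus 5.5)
Write $K_{r,n}=n-\sum_{j=1}^n Z_j$, where $Z_j=\1(\text{box } j \text{ is empty after } r \text{ throws})$. Parts (1) and (2) then reduce to computing the first two moments of the indicators, and part (3) to a central limit theorem for the number of empty boxes. That CLT is Weiss's classical theorem \cite{Weiss1958Limiting}, which we would reformulate.

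For (1): each ball misses box $j$ with probability $1-1/n$, and the balls are independent, so $\Prob(Z_j=1)=(1-1/n)^r$. Linearity of expectation gives $\Ex[K_{r,n}]=n-n(1-1/n)^r$.

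For (2): since $\Var K_{r,n}=\Var\sum_j Z_j$, we expand the sum. For $i\neq j$ we have $\Prob(Z_i=Z_j=1)=(1-2/n)^r$. Hence
\[\Ex\Big[\Big(\sum_j Z_j\Big)^2\Big]=n(1-1/n)^r+n(n-1)(1-2/n)^r,\]
and subtracting $n^2(1-1/n)^{2r}$ gives the exact formula.

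For the asymptotics at $r=cn$, use the expansion $\log(1-x)=-x-x^2/2+O(x^3)$. This gives
\[(1-1/n)^{cn}=e^{-c}\bigl(1-\tfrac{c}{2n}+O(n^{-2})\bigr),\qquad (1-2/n)^{cn}=e^{-2c}\bigl(1-\tfrac{2c}{n}+O(n^{-2})\bigr),\]
and therefore
\[(1-1/n)^{2cn}=e^{-2c}\bigl(1-\tfrac{c}{n}+O(n^{-2})\bigr).\]
Substituting, the terms of order $n^2$ cancel: $n^2e^{-2c}-n^2e^{-2c}=0$. The order-$n$ terms are
\[ne^{-c}\;-\;ne^{-2c}\;-\;2cne^{-2c}\;+\;cne^{-2c}\;=\;n\bigl(e^{-c}-(1+c)e^{-2c}\bigr).\]
This cancellation is the only delicate point in (2). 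It requires carrying the second-order terms of all three powers consistently: an error of order $O(1)$ inside $n^2(\cdot)$ would swamp the answer. One should also note that $e^{-c}-(1+c)e^{-2c}>0$, because $e^{c}>1+c$, so the normalization in (3) is nondegenerate.

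For (3): we invoke Weiss's theorem. When $r/n\to c\in(0,\infty)$, the number of empty boxes $M=\sum_j Z_j$ satisfies
\[\frac{M-\Ex M}{\sqrt{\Var M}}\Rightarrow_d\mathcal N(0,1).\]
Since $K_{cn,n}-\Ex K_{cn,n}=-(M-\Ex M)$ and the standard normal is symmetric, $K_{cn,n}$ obeys the same CLT with its exact mean and variance.

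It remains to replace these with the stated centering and scaling.
\begin{itemize}
\item \emph{Centering.} By the expansion above, $\Ex K_{cn,n}-n(1-e^{-c})=\tfrac{c}{2}e^{-c}+O(1/n)=O(1)$. After division by $\sqrt n$ this is $o(1)$, so the shift is negligible by Theorem~\ref{slutsky}, or equivalently Lemma~\ref{slutsky ii}.
\item \emph{Scaling.} Part (2) gives $\Var K_{cn,n}/\bigl(n(e^{-c}-(1+c)e^{-2c})\bigr)\to1$, so changing the denominator multiplies the statistic by a factor tending to $1$. This is again handled by a Slutsky-type argument.
\end{itemize}

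The main obstacle is Weiss's CLT itself, since the $Z_j$ are dependent. If one preferred a self-contained proof, the route would be Poissonization. Throw $\Poi(cn)$ balls instead of $cn$, so that box occupancies become i.i.d. and the CLT for the number of occupied boxes is classical. Then de-Poissonize by conditioning on the total number of balls. This step needs a joint CLT for the number of occupied boxes and the total ball count, followed by a conditional-variance computation, and that computation is exactly what produces the $-ce^{-2c}$ correction in the variance. Here we would simply cite \cite{Weiss1958Limiting}.
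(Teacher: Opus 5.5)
Your proposal is correct and follows essentially the same route as the paper: indicators of empty boxes for the mean, a covariance expansion for the exact variance, and Weiss's CLT for the number of empty boxes transferred to $K_{cn,n}$ via $x\mapsto -x$ (your exact-moment normalization followed by a Slutsky adjustment of centering and scaling is equivalent to the paper's direct citation with centering $ne^{-c}$). Your second-order expansions of $(1-1/n)^{cn}$, $(1-2/n)^{cn}$ and $(1-1/n)^{2cn}$ are more careful than the paper, which only appeals to $(1-a/n)^{cn}\to e^{-ac}$; that first-order limit cannot resolve the cancellation of the order-$n^2$ terms, so your computation is what actually justifies $\Var K_{cn,n}\sim n(e^{-c}-(1+c)e^{-2c})$.
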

\begin{proof}
    Adopting the methods from Weiss \cite{Weiss1958Limiting}, we first prove (1). Let $\widetilde{K}_{r,n}$ be the number of \textit{un}occupied boxes in this setup. Since $K_{r,n}+\widetilde{K}_{r,n}=n$, we have $\Ex[K_{r,n}]=n-\Ex[\widetilde{K}_{r,n}]$. Let $X_i$ be the indicator that box $i$ is unoccupied. Then $\Ex[X_i]=(1-1/n)^r$, so that
    \[\Ex[K_{r,n}]=n-\sum_{i=1}^n \Ex[X_i]=n-n(1-1/n)^r.\]
    To show (2), we will do a covariance expansion. First, notice that $\Var{K_{r,n}}=\Var\widetilde{K}_{r,n}$. We have
    \[\Var \widetilde{K}_{r,n}=\sum_{i=1}^n\Var X_i+2\sum_{i<j}\text{Cov}(X_i,X_j)\]
    We can compute $\Var X_i=(1-1/n)^r-(1-1/n)^{2r}$, and, for $i\neq j$, 
    \begin{equation*}
        \begin{aligned}
            \text{Cov}(X_i,X_j) &=\Prob(\text{boxes\,}i\text{ and }j \text{ both unoccupied})-(1-1/n)^{2r}\\
            &=(1-2/n)^{r}-(1-1/n)^{2r}
        \end{aligned}
    \end{equation*}
    so that 
    \begin{equation*}
        \begin{aligned}
            \Var K_{r,n} &=\Var \widetilde{K}_{r,n} \\
            &=n((1-1/n)^r-(1-1/n)^{2r})+n(n-1)((1-2/n)^r-(1-1/n)^{2r})\\
            &=n(1-1/n)^r +n(n-1)(1-2/n)^r -n^2(1-1/n)^{2r}
        \end{aligned}
    \end{equation*}
    as desired. The stated asymptotics follow by taking $r=cn$ and using $(1-a/n)^{cn}\to e^{-ac}$ for any constants $a$ and $c$ where $c>0$. For (3), Weiss \cite{Weiss1958Limiting} proves that
    \[\frac{\widetilde{K}_{cn,n}-ne^{-c}}{\sqrt{n(e^{-c}-(1+c)e^{-2c})}}\Rightarrow_d \,\mathcal{N}(0,1).\]
    To transfer this to the convergence in (3) with $K_{cn,n}$, notice that
    \[\frac{K_{cn,n}-(n-ne^{-c})}{\sqrt{n(e^{-c}-(1+c)e^{-2c})}}=\frac{(n-\widetilde{K}_{cn,n})-(n-ne^{-c})}{\sqrt{n(e^{-c}-(1+c)e^{-2c})}}\]
    \[=-\,\frac{\widetilde{K}_{cn,n}-ne^{-c}}{\sqrt{n(e^{-c}-(1+c)e^{-2c})}}\Rightarrow_d\,\mathcal{N}(0,1)\]
    where we used both the continuous mapping theorem with $f(x)=-x$, and the elementary fact that when $Z$ is a standard normal random variable, then $-Z$ is as well.
\end{proof}
To conclude, we state the probability mass function of $K_{r,n}$ and note a connection between occupancy problems and Stirling numbers. Let $S(r,k)$ denote the Stirling numbers of the second kind, that is, the number of ways to divide a set of $r$ objects into $k$ non-empty subsets. We have
\[\Prob(K_{r,n}=k)=\frac{\binom{n}{k} k! S(r,k)}{n^r},\]
which follows from choosing $k$ boxes to be occupied, permuting the boxes, and then dividing the $r$ balls among each of the $k$ boxes.

\section{Combinatorics of Random-to-Top Shuffles}
\label{Section 4}
This section recalls the setup of iterated random-to-top shuffles and investigates their combinatorial structure. We will use the notation for $K_{r,n}$, $F_{r,n}$, $D_{r,n}$, and $I_{r,n}$ as in Section $\ref{notation}$. The goal of this section is to prove the following three equalities in distribution. 
\begin{theorem}
\label{decomposition of fixed points}
Let $\pi$ be a uniformly random permutation of $[n]$, independent of $K_{r,n}$. We have
\[F_{r,n} =_d \; n-\max_{1\leq i\leq K_{r,n}} (\pi(i))+\sum_{i=1}^{K_{r,n}} \1(\pi(i)=i).\]
\end{theorem}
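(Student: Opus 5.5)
We would prove Theorem \ref{decomposition of fixed points} by describing the deck after $r$ random-to-top shuffles explicitly, following the coupling idea of Aldous and Diaconis \cite{ShufflingStopping} and Diaconis, Fill, and Pitman \cite{AnalysisTopToRandom}. Each random-to-top shuffle chooses a card uniformly at random and moves it to the top. Let $C_1,\ldots,C_r$ be the labels chosen, which are i.i.d.\ uniform on $[n]$, and let $K$ be the number of distinct labels among them. Then $K=_d K_{r,n}$, since the $r$ choices are $r$ balls thrown into the $n$ boxes $[n]$. Starting from the identity, the deck after $r$ shuffles has the following structure.
\begin{enumerate}
    \item The top $K$ positions hold the distinct chosen cards, ordered by the time of their most recent choice, most recent on top.
    \item The bottom $n-K$ positions hold the never-chosen cards in their original relative order, which is increasing.
\end{enumerate}
This is a quick induction on $r$.

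The first step is a symmetry claim. Let $(a_1,\ldots,a_K)$ be the ordered tuple of chosen cards read from the top. We claim that, conditionally on $K=k$, this tuple is uniformly distributed over all injective $k$-tuples from $[n]$. To see this, apply any fixed relabeling $\sigma\in S_n$ to the sequence $(C_1,\ldots,C_r)$. This preserves the law of the sequence and preserves $K$, and it maps the tuple $(a_i)$ to $(\sigma(a_i))$. Since $S_n$ acts transitively on injective $k$-tuples, the conditional law must be uniform. Consequently, if $\pi$ is a uniform permutation independent of $K_{r,n}$, then
\[
(K,a_1,\ldots,a_K)=_d\,\bigl(K_{r,n},\pi(1),\ldots,\pi(K_{r,n})\bigr),
\]
because the first $k$ values of a uniform permutation form a uniform injective $k$-tuple. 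We expect this step to need the most care: one has to make sure that conditioning on $K$ does not bias which ordered tuple appears. The relabeling-invariance argument handles this cleanly.

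The second step counts fixed points as a deterministic function of $(K,a_1,\ldots,a_K)$.
\begin{itemize}
    \item In the top block, position $i\le K$ is fixed exactly when $a_i=i$. This contributes $\sum_{i=1}^{K}\1(a_i=i)$.
    \item In the bottom block, let $v$ be an unchosen card and let $m$ be the number of unchosen cards less than $v$. The card $v$ sits at position $K+m+1$. Also $v-1=m+\#\{\text{chosen cards}<v\}$. Hence $v$ is fixed if and only if all $K$ chosen cards are less than $v$, that is, $v>\max_i a_i$.
    \item Every card larger than $\max_i a_i$ is unchosen. So the bottom block contributes exactly $n-\max_{1\le i\le K} a_i$ fixed points.
\end{itemize}
If $K=0$ we interpret the maximum as $0$; this only occurs when $r=0$.

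Combining the two steps, $F_{r,n}$ equals a fixed function of $(K,a_1,\ldots,a_K)$. This tuple has the same law as $(K_{r,n},\pi(1),\ldots,\pi(K_{r,n}))$, so substituting yields the stated equality in distribution.
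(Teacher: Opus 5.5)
Your proof is correct and follows the same route as the paper. In both, the top $K$ positions form a uniform injective $K$-tuple (the paper's Proposition~\ref{resampling uniform}), and the increasing bottom block contributes exactly $n-\max_{i\le K}\pi(i)$ fixed points. The differences are minor: you prove the uniformity directly by relabeling invariance instead of citing Diaconis--Fill--Pitman, and you check the bottom-block count with an explicit position calculation instead of the paper's ``consecutive run ending in $n$'' description.
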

\begin{theorem}
\label{decomposition of descents}
Let $(U_i)_{i\geq 1}^n$ be i.i.d. continuous uniform$(0,1)$ random variables, independent of $K_{r,n}$. We have
\[D_{r,n} =_d \; O_p(1)+\sum_{i=1}^{K_{r,n}-1}\1(U_i>U_{i+1}) \]
where the $O_p(1)$ term is bounded by $1$.
\end{theorem}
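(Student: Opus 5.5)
The plan is to read off the structure of the deck after $r$ random-to-top shuffles started from the identity, following the coupling idea of Aldous--Diaconis and Diaconis--Fill--Pitman recalled in the introduction. Each shuffle chooses a card label uniformly from $[n]$, independently of the past, and moves that card to the top. After $r$ shuffles, let $K$ be the number of \emph{distinct} labels that were ever chosen. Identifying labels with boxes and shuffles with balls, $K$ has exactly the law of $K_{r,n}$.

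The deck then looks as follows. The top $K$ positions hold the chosen cards, ordered by the time of their \emph{last} selection, most recent on top. The remaining $n-K$ positions hold the never-chosen cards in their original relative order, so the bottom block is increasing.

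The first step is to establish the distributional claim for the top block. Conditional on $K=k$, the ordered $k$-tuple of cards in the top block (by last-selection time) is uniformly distributed over all ordered $k$-tuples of distinct elements of $[n]$. This follows from exchangeability. The law of the sequence of chosen labels is invariant under any relabeling $\sigma\in S_n$. Such a relabeling maps the set of sequences with a given ordered tuple of distinct last-selections bijectively onto the set with the relabeled tuple, and it preserves the number of distinct labels. In particular, the relative order of the $k$ values in the top block is a uniformly random permutation of $[k]$, independent of $K$. By the standard construction used in the proof of Proposition~\ref{uniform case convergences}, it therefore has the same law as the relative order of i.i.d.\ uniforms $U_1,\ldots,U_k$ independent of $K_{r,n}$.

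Next, I decompose the descent count by position. A descent at position $i$ falls into one of three cases:
\begin{itemize}
\item $i\le K-1$, so both entries lie in the top block;
\item $i=K$, the boundary between the two blocks;
\item $i\ge K+1$, where no descent can occur because the bottom block is increasing.
\end{itemize}
Hence
\[
D_{r,n}=\sum_{i=1}^{K-1}\1(\pi(i)>\pi(i+1))+\1(K<n,\ \pi(K)>\pi(K+1)).
\]
The last term is a $\{0,1\}$-valued random variable, which is the $O_p(1)$ term bounded by $1$. The first sum depends only on $K$ and the relative order of the top block. By the previous step, jointly with $K$ it has the law of $\sum_{i=1}^{K_{r,n}-1}\1(U_i>U_{i+1})$ with the $U_i$ independent of $K_{r,n}$. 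This gives the stated equality in distribution.

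The main obstacle is the uniformity claim in the first step, specifically checking that conditioning on $K=k$ does not bias the order of last selections. The relabeling argument above handles it, since relabeling commutes with taking last-selection times and preserves the number of distinct labels. A minor point of care is that the boundary term is not independent of the main sum. However, the statement only asserts a decomposition with an error bounded by $1$, so this dependence is harmless. Later applications only use Lemma~\ref{slutsky ii} after dividing by $\sqrt{n}$, where a bounded term is negligible.
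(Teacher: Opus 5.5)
Your proof is correct and follows essentially the same argument as the paper. Both split $D_{r,n}$ into descents inside the top block of $K_{r,n}$ moved cards (whose relative order is uniform), no descents in the increasing bottom block, and a single boundary indicator bounded by $1$. The only difference is that you prove the uniformity of the top block directly, by the relabeling/exchangeability argument on last-selection times. The paper instead invokes Proposition~\ref{resampling uniform} (citing Diaconis--Fill--Pitman) and couples the shuffled deck with a uniform permutation $\pi$ whose last $n-K_{r,n}$ entries are sorted.
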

\begin{theorem}
\label{decomposition of inversions}
    We have 
    \[I_{r,n}=_d \sum_{i=1}^{K_{r,n}} R_i\]
    where the $R_i$ are independent discrete uniform random variables each supported on $\{0,1,2,\ldots,n-i\}$ and are independent of $K_{r,n}$.
\end{theorem}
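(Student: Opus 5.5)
The plan is to make the coupling of Aldous--Diaconis and Diaconis--Fill--Pitman explicit, conditioning on $K_{r,n}=k$, and then read off the inversions through a Lehmer-code (inversion table) argument restricted to the top $k$ positions.

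\emph{Step 1: the structure of the deck.} In a random-to-top shuffle a uniformly random position is chosen and that card is moved to the top. Whatever the current arrangement, a uniform position holds a uniformly random card. So, starting from the identity, the sequence of card labels $L_1,\dots,L_r$ moved to the top is i.i.d.\ uniform on $[n]$. Think of the labels as balls and the cards as boxes. The number of distinct labels moved is then exactly $K_{r,n}$.

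After the $r$ shuffles, the top $K_{r,n}$ positions hold the distinct moved labels, ordered by most recent last occurrence. The bottom $n-K_{r,n}$ positions hold the never-touched cards in increasing order, since untouched cards keep their relative order.

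\emph{Step 2: symmetry.} I claim that, conditionally on $K_{r,n}=k$, the ordered tuple $(\pi(1),\dots,\pi(k))$ is uniformly distributed over ordered $k$-tuples of distinct elements of $[n]$. For any $\sigma\in S_n$, the sequence $(\sigma(L_1),\dots,\sigma(L_r))$ has the same law as $(L_1,\dots,L_r)$. Applying $\sigma$ preserves the number of distinct labels and maps the tuple of last occurrences to its image under $\sigma$. Hence the conditional law of the tuple is invariant under the transitive action of $S_n$ on ordered $k$-tuples of distinct elements, so it is uniform. I expect this step to be the main obstacle: the point needing care is that the event $\{K_{r,n}=k\}$ is $\sigma$-invariant and that the map to the top-$k$ tuple commutes with relabeling.

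\emph{Step 3: counting inversions.} For $i\le k$, set $R_i=\#\{j>i:\pi(i)>\pi(j)\}$. Any pair $(i,j)$ with $k<i<j$ contributes nothing, because the bottom block is increasing. Therefore
\[
I_{r,n}=\sum_{i=1}^{k}R_i .
\]
The values in positions $j>i$ are exactly $[n]\setminus\{\pi(1),\dots,\pi(i)\}$, whether those positions lie in the top or the bottom block. So $R_i$ equals the rank of $\pi(i)$ among the $n-i+1$ values not yet used, minus one.

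By Step 2, given $\pi(1),\dots,\pi(i-1)$, the value $\pi(i)$ is uniform on the $n-i+1$ remaining values. Hence $R_i$ is uniform on $\{0,\dots,n-i\}$ and independent of $R_1,\dots,R_{i-1}$; this is the standard Lehmer-code bijection, exactly as in the proof of Proposition \ref{uniform case convergences}. This conditional law depends on $k$ only through the number of summands.

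We may therefore realize the sum with a fixed i.i.d.-indexed family $(R_i)_{i\le n}$, independent of $K_{r,n}$. Summing over $k$ against $\Prob(K_{r,n}=k)$ gives $I_{r,n}=_d\sum_{i=1}^{K_{r,n}}R_i$, as claimed.
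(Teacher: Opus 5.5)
Your proposal is correct and follows essentially the same route as the paper: condition on $K_{r,n}=k$, use that the top $k$ positions form a uniformly random $k$-permutation and that the sorted bottom block contributes no inversions, then read off $\sum_{i\le k}R_i$ via the Lehmer code (the paper does this last step by sorting the tail of a uniform $\pi$ and invoking the decomposition from Proposition~\ref{uniform case convergences}). The one real addition is your relabeling-symmetry argument in Step 2, which proves the uniformity that Proposition~\ref{resampling uniform} simply cites from Diaconis--Fill--Pitman; also, the $R_i$ are independent but not identically distributed, so ``i.i.d.-indexed family'' should read ``independent family.''
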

We outline the approach to these decompositions as follows. We will decompose permutations obtained by iterated random-to-top shuffling into sub-structures that depend on the number of distinct cards which have moved to the top, which is equal in distribution to $K_{r,n}$. Then, we will exhibit bijections between these sub-structures with subsets of uniformly random permutations, and we will take their statistics. The distributional equivalences in Theorem $\ref{decomposition of fixed points}$, Theorem $\ref{decomposition of descents}$, and Theorem $\ref{decomposition of inversions}$ are powerful because they allow us to exploit well-known results on balls-in-bins and permutation statistics to prove our main results. 

To begin motivating our theory, we will first define the connection between card shuffling and permutations. We can illustrate this relationship as follows. Label a deck of cards $1,2,\ldots,n$. Given a probability $\mathbb{P}$ on the symmetric group $S_n$, pick a permutation $\pi$ from $\mathbb{P}$. Then $\pi(i)$ corresponds to the card in position $i$ of the shuffled deck. For example, if we pick $\pi=25143$, then $\pi(1)=2$, so card $2$ is in position $1$, $\pi(2)=5$, so card $5$ is in position $2$, and so on.

We will consider iterated shuffling as a random walk on the symmetric group. Specifically, we start with the identity permutation $id=12\ldots n$. Given a probability $\mathbb{P}$ on $S_n$ corresponding to a shuffle, we can perform $r$ iterated shuffles by picking permutations $\pi_1,\pi_2,\ldots,\pi_r$ independently from $\mathbb{P}$ and multiplying them together on the left. This forms the product
\[\pi_r\pi_{r-1}\cdots\pi_1\cdot id=\pi_r\pi_{r-1}\cdots\pi_1.\]
Here, multiplication is given as composition in the symmetric group $S_n$, where, for permutations $\pi$ and $\sigma$, we have $(\pi\cdot\sigma)(i)=\pi(\sigma(i))$. One can check that this definition of iterated shuffling is compatible with our above definition of a single shuffle. We remark that the induced probability measure on $S_n$ corresponding to $r$ iterated shuffles from a given measure $\mathbb{P}$ is calculated by performing an $r$-fold convolution of $\mathbb{P}$ with itself. This concept is explained in detail in Chapter $1$ of Diaconis and Fulman \cite{MathofShufflingCards}, but we will not need it to obtain our results. Indeed, for random-to-top shuffles, iterated shuffling admits a rich combinatorial description. We will show in this section that such a characterization is sufficient for our analysis. The first step is to recall the definition of random-to-top shuffling which is consistently used in the literature, and which we will follow.
\begin{definition}
    A random-to-top shuffle is performed by selecting a random card and moving it to the top of the deck. In particular, we allow the top card to be selected, in which the ordering of the deck does not change. Iterated random-to-top shuffles start with the deck in order $12\cdots n$.
\end{definition}
For example, if the deck has $n=8$ cards and we start with the identity permutation $\pi=12\cdots8$ and perform a random-to-top shuffle with card $4$ selected, the resulting permutation is $41235678$. If we again start with $\pi=12\cdots8$ and we perform $r=3$ iterated random-to-top shuffles, with cards $3,6,1$ selected in order, then the resulting permutation is $16324578$. 

Now, we will show the distributional equivalence which allows us to conditionally sample the permutation statistics of random-to-top shuffles from independent uniformly random permutations. Recall that a $k$-permutation of $[n]$ is defined as a permutation of a subset of $k$ distinct integers of $[n]$. For example, $\sigma=3285$ is a $4$-permutation of $[11]$.  
\begin{proposition}
\label{resampling uniform}
    Suppose that after $r$ random-to-top shuffles of an $n$-card deck, exactly $k$ unique cards have been moved to the top. Then, the cards which have been selected are in the first $k$ positions of the deck, and they form a uniformly random $k$-permutation of $[n]$. The cards that have not been selected are in the last $n-k$ positions of the deck in the same relative (increasing) order.
\end{proposition}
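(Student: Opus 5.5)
We prove the statement in two parts: a deterministic structural claim, then a distributional claim obtained by conditioning on the set of selected cards and using a symmetry of the selection sequence.

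\textbf{Structure.} Fix any selection sequence $c_1,\ldots,c_r\in[n]$, and let $S=\{c_1,\ldots,c_r\}$ with $|S|=k$. By induction on $r$, the deck after $r$ moves is as follows.
\begin{itemize}
\item The cards of $S$ occupy the top $|S|$ positions, ordered by the time of their \emph{last} selection, with the most recent on top.
\item The cards of $[n]\setminus S$ follow below in increasing order.
\end{itemize}
The base case $r=0$ is the identity. For the inductive step, selecting $c$ moves $c$ to the top. If $c$ was already in $S$, it leaves the selected block and re-enters at the top, so the block stays ordered by last-selection time and the unselected cards are undisturbed. If $c$ was new, it leaves the unselected block without changing the relative order of the rest, and $S$ grows by one. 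The case where $c$ is already the top card is covered by the same argument. This gives the claims about positions and the increasing order of the last $n-k$ cards.

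\textbf{Uniformity.} The selections are i.i.d.\ uniform on $[n]$. Condition on the event that exactly $k$ distinct cards are chosen. By the structure step, the top $k$ cards, read from the top, are the distinct selected cards listed in decreasing order of last-occurrence time. This is a deterministic function $\Phi(c_1,\ldots,c_r)$ taking values in the set of injective words of length $k$ over $[n]$.

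For any permutation $\tau\in S_n$, the map $(c_1,\ldots,c_r)\mapsto(\tau(c_1),\ldots,\tau(c_r))$ is measure-preserving and preserves the number of distinct values. It also satisfies $\Phi(\tau c)=\tau\circ\Phi(c)$, because relabelling cards does not change occurrence times. Hence the conditional law of $\Phi$ is invariant under $S_n$ acting letterwise on injective $k$-words. That action is transitive, since any injective $k$-word can be mapped to any other by some $\tau$. An invariant probability measure under a transitive action must be uniform, so $\Phi$ is uniform over all $(n)_k$ $k$-permutations of $[n]$.

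\textbf{Main obstacle.} The only delicate point is the bookkeeping in the structure step, namely handling repeated selections of the same card. Once the ``ordered by last selection'' description is established, uniformity follows from the symmetry argument without any counting of Stirling-type sums.
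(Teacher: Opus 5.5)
Your proof is correct, and it is more self-contained than the paper's. The paper argues in two pieces. First, the set of selected cards is a uniform $k$-subset because every card is equally likely to be chosen. Second, the order of those cards given the subset is uniform, and for this the paper cites Diaconis, Fill, and Pitman rather than proving it. The paper also declares it ``easy to see'' that the unselected cards sit at the bottom in increasing order.

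You instead prove the structural claim explicitly by induction, showing that the top block is ordered by last selection time. You then obtain uniformity of the subset and of its order at once from a single relabelling symmetry. The law of $(c_1,\ldots,c_r)$ and the event $\{|S|=k\}$ are $S_n$-invariant, $\Phi(\tau c)=\tau\circ\Phi(c)$, and $S_n$ acts transitively on injective $k$-words. Hence the conditional law of $\Phi$ is uniform on all $(n)_k$ of them. Conditional uniformity of the order given the subset then falls out as a corollary rather than needing a separate input.

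What your route buys:
\begin{itemize}
\item It removes the dependence on the citation.
\item It makes clear that uniformity of the card choice is the only probabilistic input. For biased (Tsetlin library) selection the relabelling symmetry is lost, and the conclusion fails in general.
\item Your ``ordered by last selection time'' lemma is exactly the fact the paper later uses without proof when computing $\Ex[I_{r,n}]$: a pair $(i,j)$ is an inversion iff card $j$ was last moved after card $i$. So your structural step supports that argument too.
\end{itemize}
The paper's version is shorter, but it outsources the only nontrivial step.
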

\begin{proof}
    Suppose that exactly $k$ unique cards have been moved to the top after $r$ random-to-top shuffles. The cards which have been moved are a uniformly random $k$-subset of $[n]$ since all cards are equally likely to be moved to the top. Now the ordering of the $k$ cards which have been moved is a uniformly random permutation of the uniformly random $k$-subset of $[n]$ which we chose, by \cite{AnalysisTopToRandom}. Therefore, the first $k$ cards are equal in distribution to the first $k$ positions of an independently chosen uniformly random permutation. It is easy to see that the $n-k$ cards which have not been moved to the top are in the same relative order at the bottom of the deck.
\end{proof}
We can equate in distribution the number of distinct cards which have been moved to the top of the deck after $r$ to the occupancy random variables $K_{r,n}$ from Section $\ref{Section3}$.
\begin{lemma}
    The random variable $K_{r,n}$ is equal in distribution to the number of distinct cards which have been moved to the top after $r$ top-to-random shuffles of an $n$-card deck.
\end{lemma}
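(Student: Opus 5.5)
We prove this by identifying the random selections of the shuffle with balls thrown into boxes. (The statement says ``top-to-random,'' but in context it means the random-to-top shuffles just defined; we treat it that way.)

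\textbf{Step 1: the model.} By the definition of a random-to-top shuffle, each shuffle selects a card uniformly from the $n$ cards, including the top card. Distinct shuffles make independent selections. The selected card's label is uniform on $[n]$ no matter what the current arrangement of the deck is, since a uniformly chosen position holds a uniformly chosen card. So the sequence of selected labels $C_1,\ldots,C_r$ is i.i.d.\ uniform on $[n]$. If the shuffle is instead described as picking a uniform position, the same conclusion holds: conditionally on the past, the label at a uniform position is uniform on $[n]$, and this gives the independence.

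\textbf{Step 2: the coupling.} View card labels as the $n$ boxes and shuffles as the $r$ balls. Ball $t$ goes into box $C_t$. Then the set of cards that have been moved to the top at least once equals the set of occupied boxes, so its cardinality equals $K_{r,n}$ exactly under this coupling. Equality in distribution follows.

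\textbf{Step 3: the convention about the top card.} The only subtle point is selecting the card that is already on top. The deck does not change, but that card still counts as ``moved to the top'' under the paper's convention, which matches Proposition~\ref{resampling uniform}. With this convention a selected card is counted even when the selection does not change the deck, so the count is exactly $|\{C_1,\ldots,C_r\}|$.

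The main obstacle is justifying the independence of the labels in Step 1, and it is handled by the conditional-uniformity argument given there.
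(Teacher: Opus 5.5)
Your proof is correct and is the same argument as the paper's: the card chosen at shuffle $t$ is identified with the box receiving ball $t$, so occupied boxes are exactly the distinct cards moved to the top. You also make explicit two points the paper leaves implicit, namely that the selected labels are i.i.d.\ uniform on $[n]$ and that selecting the top card counts as a move, and you rightly read ``top-to-random'' as a typo for random-to-top.
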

\begin{proof}
    We establish a bijection as follows. An assignment of a ball to a box is the same as choosing a card and moving it to the top of the deck. The number of boxes which are occupied is then equal to the number of distinct cards which have been moved to the top.
\end{proof}
We now have all of the ingredients to prove the decompositions in Theorem $\ref{decomposition of fixed points}$, Theorem $\ref{decomposition of descents}$, and Theorem $\ref{decomposition of inversions}$. We start with the number of fixed points.
\begin{proof}[Proof of Theorem $\ref{decomposition of fixed points}$]
 By Proposition $\ref{resampling uniform}$, we can consider a uniformly random permutation $\pi$, independent of $K_{r,n}$. Then, reorder the elements in the last $n-K_{r,n}$ positions of $\pi$ in increasing order to form a permutation $\pi'$. This induces the same conditional distribution on $r$ iterations of random-to-top shuffles when exactly $K_{r,n}$ distinct cards have been moved to the top. Count the number of fixed points in the first $K_{r,n}$ positions of $\pi'$. This is equal to the number of fixed points in the first $K_{r,n}$ positions of $\pi$. Then, count the fixed points in the remaining $n-K_{r,n}$ positions of $\pi'$. This quantity is equal to the length of the string of \textit{consecutive} cards $\{n-i,n-i+1,\ldots,n\}$ which ends in $n$ at the end of the permutation $\pi'$. This is equal to $n-\max_{1\leq i\leq K_{r,n}}\pi(i)$, and in particular, is equal to zero if and only if card $n$ has been moved to the top. One obtains the formula in Theorem $\ref{decomposition of fixed points}$.
\end{proof}

For example, suppose $n=9$, $r=7$, and $K_{r,n}=5$, and we sample a uniformly random permutation 
\begin{equation}
\label{sampling fpts}
    \pi=273519684.
\end{equation}
The corresponding reordered permutation corresponding to iterated random-to-top shuffles, conditional on $K_{r,n}=5$ distinct cards being moved to the top, is given by 
\begin{equation}
\label{sampling fpts ii}
    \pi'=\underbrace{27351}_{\text{length } K_{r,n}}4689.
\end{equation}
There is one fixed point in position $3$ from the first $K_{r,n}$ elements of $\pi$, and the maximal element in the first $5$ positions of $\pi$ is equal to $7$. The number of fixed points of $\pi'$ is equal to $3$, which agrees with the quantity from Theorem $\ref{decomposition of fixed points}$.

We next prove the decomposition of the number of descents.
\begin{proof}[Proof of Theorem $\ref{decomposition of descents}$]
We use a similar argument to the proof of Theorem $\ref{decomposition of fixed points}$. Proposition $\ref{resampling uniform}$ allows us to consider a uniformly random permutation $\pi$, independent of $K_{r,n}$. Then, reorder the elements in the last $n-K_{r,n}$ positions of $\pi$ in ascending order to form a permutation $\pi'$. Since the last $n-K_{r,n}$ elements of $\pi'$ are in ascending order, there are no descents in these positions. The number of descents in the first $K_{r,n}$ positions of $\pi'$ is equal to the number of descents in the first $K_{r,n}$ positions of $\pi$. Since $\pi$ is uniformly random, we have
\[\sum_{i=1}^{K_{r,n}-1}(\pi(i)>\pi(i+1))=_d\sum_{i=1}^{K_{r,n}-1}\1(U_i>U_j)\]
where the $(U_i)_{i\geq 1}^n$ are i.i.d. uniform$(0,1)$ random variables independent of $K_{r,n}$, and the equality in distribution follows from Equation $\eqref{decomposition of uniformly random descents}$.
It remains to count whether $\pi(K_{r,n})>\pi'(K_{r,n}+1)$. This term contributes at most one descent, so it is $O_p(1)$.
\end{proof}

Working with the example from Equation $\eqref{sampling fpts}$ and Equation $\eqref{sampling fpts ii}$, we find that the number of descents in the first $K_{r,n}=5$ positions of $\pi$ is equal to $2$. The last thing we need to check is the corresponding $O_p(1)$ term, and this is equal to the indicator that $\pi(5)>\pi'(6)$, which contributes zero in this case. So, in this example, the corresponding number of descents after iterations of random-to-top shuffles, conditional on $K_{r,n}$, is equal to $2$.

Lastly, we prove the decomposition of the number of inversions.
\begin{proof}[Proof of Theorem $\ref{decomposition of inversions}$]
    Suppose that exactly $K_{r,n}$ distinct cards have been moved to the top. As before, let $\pi$ be a uniformly random permutation of $[n]$. Reorder the last $n-K_{r,n}$ elements of $\pi$ into increasing order to form a new permutation $\pi'$. Then $\pi'$ has the same distribution as a permutation obtained by $r$ iterated random-to-top shuffles, conditional on $K_{r,n}$ distinct cards being moved to the top. Since the last $n-K_{r,n}$ cards of $\pi'$ are in increasing order, we only need to count inversions $(i,j)$ of $\pi'$ such that $i\leq K_{r,n}$ (with no restrictions on $j$ besides $j>i$). But this is equal to the number of inversions $(i',j')$ of $\pi$ such that $i'\leq K_{r,n}$. The decomposition follows.
\end{proof}

Within the example from Equation $\eqref{sampling fpts}$ and Equation $\eqref{sampling fpts ii}$, we clearly see that all of the inversions $(i,j)$ of $\pi'$ have $i\leq 5$, and $K_{r,n}=5$ in this case. This concludes our analysis of the equalities in distribution of each statistic.

\section{Fixed Points}
\label{Section 5}
We begin by giving a combinatorial proof of the expected number of fixed points, based on the return probabilities of individual cards. We then develop the additional ingredients needed to prove Theorem~\ref{fixed points main} and conclude with the proof of that result.

In her PhD thesis, Pehlivan \cite{Pehlivan} gave three proofs of the formula for the expected number of fixed points after iterated random-to-top shuffles. She showed that
\begin{equation}
\label{expected fixed points Pehlivan}
    \Ex[F_{r,n}]=1+\sum_{k=0}^{n-2}\left(\frac{k}{n}\right)^r.
\end{equation}
As summarized by Diaconis and Fulman \cite{MathofShufflingCards}, Pehlivan's first proof used an explicit formula for the chance of a permutation after $r$ shuffles, her second proof used the eigenvalues of the transition matrix of a single shuffle, and her third used representation theory. The simplicity of Equation \eqref{expected fixed points Pehlivan} suggests that there should also be a direct combinatorial argument. We obtain such a proof by analyzing the return probability of each card to its original position.
\begin{proposition}
\label{return probability}
Let $W_{r,k}$ be the event that, after $r$ random-to-top shuffles, card $k$ is in position $k$, i.e. $k$ is a fixed point of the permutation corresponding to the shuffled deck. Then
\[
\Prob(W_{r,k})=\left(\frac{k-1}{n}\right)^r+\frac{\Prob(K_{r,n}\ge k)}{n},
\]
where $K_{r,n}$ is the number of occupied boxes when $r$ balls are thrown independently and uniformly into $n$ boxes.
\end{proposition}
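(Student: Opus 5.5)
We compute $\Prob(W_{r,k})$ by splitting on whether card $k$ has ever been selected during the $r$ shuffles. These two events are disjoint and exhaustive, and each piece produces one of the two terms in the formula. Throughout, we use Proposition~\ref{resampling uniform}: after the shuffles, the selected cards occupy the top $K_{r,n}$ positions in uniformly random order, and the unselected cards sit below them in increasing order. We also use the preceding lemma, which says the number of distinct selected cards is distributed as $K_{r,n}$.

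\emph{Case 1: card $k$ never selected.} Write $s_<$ and $s_>$ for the numbers of distinct selected cards with labels less than $k$ and greater than $k$, respectively. Card $k$ lies in the increasing bottom block. Above it are all $s_<+s_>$ selected cards, together with the $(k-1)-s_<$ unselected cards with smaller labels. Its position is therefore
\[
(s_<+s_>)+(k-1-s_<)+1 = k+s_>.
\]
So card $k$ is at position $k$ exactly when $s_>=0$, that is, when no card with label $\geq k$ is ever chosen. This holds iff every one of the $r$ independent choices lies in $\{1,\dots,k-1\}$, which has probability $((k-1)/n)^r$. This event is contained in the event that card $k$ is never selected, so it contributes exactly the first term.

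\emph{Case 2: card $k$ selected.} Condition on $K_{r,n}=m$. By symmetry among the cards, the selected set is a uniformly random $m$-subset of $[n]$, so $\Prob(k \text{ selected}\mid K_{r,n}=m)=m/n$. Now condition further on the selected set (containing $k$). By Proposition~\ref{resampling uniform}, the selected cards appear in the top $m$ positions in uniformly random order, so card $k$ is equally likely to be in each of positions $1,\dots,m$. Hence the conditional probability that it sits at position $k$ is $\1(m\ge k)/m$. Summing over $m$ gives
\[
\sum_{m}\Prob(K_{r,n}=m)\cdot\frac{m}{n}\cdot\frac{\1(m\geq k)}{m}=\frac{\Prob(K_{r,n}\ge k)}{n},
\]
which is the second term.

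The only delicate point is Case 2. There we need the order of the selected cards to be uniform conditionally on both $K_{r,n}=m$ and the specific selected set, not just on average. This is exactly the content of Proposition~\ref{resampling uniform} (following \cite{AnalysisTopToRandom}). I would invoke it in that conditional form and note that it makes the $m/n$ and $1/m$ factors cancel cleanly. Case 1 is a direct position count.
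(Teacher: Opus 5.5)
Your proof is correct. It differs from the paper's mainly in where the two cases are cut and in how the second term is computed.

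\textbf{The paper's route.} The paper splits on the event $A_{r,k}$ that some card in $\{k,\ldots,n\}$ is ever selected.
\begin{enumerate}
\item On $A_{r,k}^c$, card $k$ is trivially fixed.
\item On $A_{r,k}$, the paper asserts, without spelling out why, that card $k$ can be fixed only if $K_{r,n}\ge k$.
\item It then reads off the \emph{content of position $k$}: given $K_{r,n}\ge k$, the first $k$ positions form a uniformly random $k$-permutation of $[n]$, so position $k$ holds card $k$ with probability $1/n$.
\item The inclusion $\{K_{r,n}\ge k\}\subseteq A_{r,k}$ closes the computation.
\end{enumerate}

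\textbf{Your route.} You split instead on whether card $k$ is selected, and you track the \emph{location of card $k$}. The two partitions produce the same two pieces, namely $A_{r,k}^c$ and $W_{r,k}\cap\{k \text{ selected}\}$.

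\textbf{What each buys.} Your exact formula $k+s_{>}$ for the position of an unselected card $k$ supplies precisely the justification the paper omits: an unselected card $k$ is pushed below position $k$ as soon as any larger card is chosen. In exchange, your Case 2 must condition on $K_{r,n}=m$ and on the selected set, then sum over $m$ to see the factors $m/n$ and $1/m$ cancel. The paper's position-centric view gives the factor $1/n$ in one step with no summation.

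Both routes rest on the same input: Proposition~\ref{resampling uniform} in its conditional form, namely uniform order given the selected set. You correctly flag this, and it does hold. Relabeling the sequence of choices by any permutation of the selected set is a bijection on sequences with that value set, and it permutes the order of last selections accordingly.
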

\begin{proof}
Let $A_{r,k}$ be the event that at least one of the cards in $\{k,k+1,\ldots,n\}$ has been moved to the top during the $r$ shuffles. If no card in $\{k,k+1,\ldots,n\}$ is ever selected, then card $k$ is certainly fixed. Since each shuffle selects one of the first $k-1$ cards with probability $(k-1)/n$, we have that $(A_{r,k})^c$ has probability $((k-1)/n)^r$.

Now suppose that $A_{r,k}$ occurs. On this event, card $k$ can be fixed only if at least $k$ distinct cards have been moved to the top, that is, only if $K_{r,n}\geq k$. Conditional on $\{K_{r,n}\geq k\}$, Proposition~\ref{resampling uniform} implies that the first $k$ positions of the shuffled deck are distributed as the first $k$ positions of a uniformly random permutation of $[n]$. Therefore, conditional on $\{K_{r,n}\ge k\}$, the probability that position $k$ is a fixed point is $1/n$. Spelling this out, we have the following tower of conditional probabilities.
    \begin{equation}
    \begin{aligned}
        \Prob(W_{r,k}) &  =\Prob(W_{r,k}\mid A_{r,k})\cdot\Prob(A_{r,k})+\Prob(W_{r,k}\mid (A_{r,k})^c)\cdot\Prob((A_{r,k})^c) \\
        & =\Prob(W_{r,k}\mid A_{r,k})\cdot\Prob(A_{r,k})+\left(\frac{k-1}{n}\right)^r \label{conditional prob fixed points}
        \end{aligned}
    \end{equation}
    By our combinatorial reasoning, we have
    \begin{equation}
    \label{conditional prob expansion}
        \begin{aligned}
            \Prob(W_{r,k}\mid A_{r,k}) &=\Prob(W_{r,k}\cap \{K_{r,n}\geq k\}\mid A_{r,k}) \\
            &=\Prob(W_{r,k}\mid \{K_{r,n}\geq k\}\cap A_{r,k})\cdot\Prob(K_{r,n}\geq k\mid A_{r,k}) \\
            &=\frac{1}{n}\cdot\Prob(K_{r,n}\geq k\mid A_{r,k}) \\
            & =\frac{1}{n}\cdot\frac{\Prob(\{K_{r,n}\geq k\}\cap A_{r,k})}{\Prob(A_{r,k})} \\
            & =\frac{1}{n}\cdot\frac{\Prob(K_{r,n}\geq k)}{\Prob(A_{r,k})}
        \end{aligned}
    \end{equation}
    where the last equality follows from the pigeonhole principle, since whenever at least $k$ distinct cards have been selected, it is necessary that at least one of the cards $\{k,k+1,\ldots, n\}$ has been selected. This implies that $\{K_{r,n}\geq k\}\subseteq A_{r,k}$. Putting Equation $\eqref{conditional prob expansion}$ together with Equation $\eqref{conditional prob fixed points}$, we obtain the proposition.
\end{proof}
We will now use Proposition $\ref{return probability}$ to recover the equality in Equation $\eqref{expected fixed points Pehlivan}$. Let $X_{r,k}$ be the indicator that card $k$ is in position $k$ after $r$ top-to-random shuffles started from the identity. We have
\[F_{r,n}=\sum_{k=1}^n X_{r,k},\]
so taking expectations of both sides and applying Proposition $\ref{return probability}$,
\begin{equation*}
\begin{aligned}
    \Ex[F_{r,n}] 
             &=\sum_{k=1}^n \Prob(W_{r,k}) \\
             &=\sum_{k=1}^n\left(\frac{\Prob(K_{r,n}\geq k)}{n}+\left(\frac{k-1}{n}\right)^r \right) \\
            &=\frac{\Ex[K_{r,n}]}{n}+\sum_{k=1}^n \left(\frac{k-1}{n}\right)^r \\
            &=1-\left(\frac{n-1}{n}\right)^r+\sum_{k=1}^n \left(\frac{k-1}{n}\right)^r \\
            & =1+\sum_{k=0}^{n-2}\left(\frac{k}{n}\right)^r.
    \end{aligned}
\end{equation*}
This gives a combinatorial proof of Pehlivan's expected value.

Pehlivan further showed that when $r=cn$ for a constant $c>0$ and $n\to\infty$,
\[
\Ex[F_{cn,n}]\to 1-e^{-c}+\frac{1}{e^c-1},
\]
and
\[
\Var F_{cn,n}\to 1-e^{-c}+\frac{e^c}{(e^c-1)^2}.
\]
In particular, these limits are not equal, so $F_{cn,n}$ does not converge in distribution to a Poisson random variable.

To motivate our characterization of the limiting distribution of $F_{cn,n}$, it is helpful to revisit the decomposition from Theorem $\ref{decomposition of fixed points}$. Recall that
\[
F_{r,n}=_d\, n-\max_{1\le i\le K_{r,n}}\pi(i)+\sum_{i=1}^{K_{r,n}}\1(\pi(i)=i),
\]
where $\pi$ is a uniformly random permutation of $[n]$ independent of $K_{r,n}$. By Proposition~\ref{facts about krn}, when $r=cn$, we have
\[
\Ex[K_{cn,n}]=n\Bigl(1-(1-1/n)^{cn}\Bigr)\sim n(1-e^{-c}).
\]
This suggests replacing the random index $K_{cn,n}$ by the deterministic value $\lfloor n(1-e^{-c})\rfloor$. Under that heuristic, one expects the deterministically indexed random variables

\[n-\max_{1\leq i\leq \lfloor n(1-e^{-c})\rfloor} (\pi(i))\]
and 
\[\sum_{i=1}^{\lfloor n(1-e^{-c})\rfloor} \1(\pi(i)=i)\] to converge to a zero-indexed geometric random variable and a Poisson random variable, respectively, both with parameter $1-e^{-c}$. More precisely, if $X\sim \Poi(1-e^{-c})$ and $Y$ is a zero-indexed geometric random variable with parameter $1-e^{-c}$, and $X$ and $Y$ are independent, then
\[
\Ex[X+Y]=1-e^{-c}+\frac{1}{e^c-1}
\]
and
\[
\Var(X+Y)=1-e^{-c}+\frac{e^c}{(e^c-1)^2},
\]
matching the limits computed by Pehlivan \cite{Pehlivan}. This was the main clue that the decomposition in Theorem $\ref{decomposition of fixed points}$ was the correct starting point.

Our proof of Theorem $\ref{fixed points main}$ proceeds in two steps. First, we prove convergence for the model obtained by replacing $K_{cn,n}$ in $F_{cn,n}$ with $\lfloor a_n n\rfloor$, where $a_n$ is a deterministic sequence taking values in $(0,1]$ which converges uniformly to a limit $a\in(0,1]$. Then, we show that the random index $K_{cn,n}$ may be replaced by its mean without changing the limiting distribution.

To begin developing the material for the proof, we will need a few technical lemmas. The first lemma will allow us to match the limiting distributions.

\begin{lemma}
    Let $X\sim\Poi(a)$ and let $Y$ be geometric with $P(Y=k)=a(1-a)^k$. Suppose $X$ and $Y$ are independent. Then, we have 
    \begin{equation}
    \label{pmf of x+y}   
        \Prob(X+Y=\ell)=\sum_{j=0}^{\ell}\frac{a(1-a)^j e^{-a}a^{\ell -j}}{(\ell -j)!}
    \end{equation}
\end{lemma}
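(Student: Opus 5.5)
This is a direct convolution computation, and I would carry it out as follows. Since $X$ and $Y$ are independent and both take values in the nonnegative integers, the law of total probability gives, for each integer $\ell\geq 0$,
\[
\Prob(X+Y=\ell)=\sum_{j=0}^{\infty}\Prob(Y=j)\,\Prob(X=\ell-j).
\]
The terms with $j>\ell$ vanish because $X\geq 0$, so only $0\le j\le \ell$ survive. Independence lets the joint probability $\Prob(Y=j, X=\ell-j)$ factor into the product above.

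Next I substitute the two mass functions: $\Prob(Y=j)=a(1-a)^j$ and $\Prob(X=\ell-j)=e^{-a}a^{\ell-j}/(\ell-j)!$. Multiplying them gives exactly the summand in Equation \eqref{pmf of x+y}, which finishes the argument.

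Two small points need checking, and neither is a real obstacle. First, the events $\{Y=j\}$ for $j\ge 0$ partition the sample space, which justifies the total probability decomposition. Second, the parameter range must be $a\in(0,1]$ so that $a(1-a)^j$ is a genuine probability mass function; summing the geometric series gives total mass $1$. It is also worth recording, for later use in Theorem \ref{fixed points main}, that with $a=1-e^{-c}$ one has $1-a=e^{-c}$, so this $Y$ is the geometric variable of that theorem.

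If a closed form were wanted, one could factor out $a e^{-a}(1-a)^{\ell}$ and write the sum as $\sum_{m=0}^{\ell} (a/(1-a))^{m}/m!$, a truncated exponential series. The statement does not require this, so I would stop at the displayed identity.
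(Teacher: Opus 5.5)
Your proposal is correct and is the same direct convolution computation the paper cites as its one-line proof: condition on $Y=j$, use independence to factor $\Prob(Y=j, X=\ell-j)$, and substitute the two mass functions. One minor point concerns only your optional closed form, not the lemma: it divides by $1-a$ and so needs $a<1$. At $a=1$ one has $Y\equiv 0$, and the sum reduces to its $j=0$ term.
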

\begin{proof}
    This is a direct convolution computation.
\end{proof}
The next lemma concerns the distribution of the maximum of the first given number of positions of a random permutation.
\begin{lemma}
\label{max of uniformly random}
    Let $\pi$ be a uniformly random permutation of $[n]$. We have
    \[\Prob\left(\max_{1\leq i\leq j}\pi(i)=m\right)=\frac{\binom{m-1}{j-1}}{\binom{n}{j}}.\]
\end{lemma}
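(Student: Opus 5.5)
The plan is to reduce the statement to counting unordered subsets. The key observation is that for a uniformly random permutation $\pi$ of $[n]$, the set $\{\pi(1),\ldots,\pi(j)\}$ of values in the first $j$ positions is a uniformly random $j$-subset of $[n]$. This holds because each $j$-subset $S$ arises from exactly $j!\,(n-j)!$ permutations: we arrange $S$ in the first $j$ positions in $j!$ ways and the complement in the remaining positions in $(n-j)!$ ways. Every subset therefore has probability $j!(n-j)!/n! = 1/\binom{n}{j}$. Since $\max_{1\le i\le j}\pi(i)$ is a function of this set alone, it suffices to compute the distribution of the maximum of a uniformly random $j$-subset of $[n]$.

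Next we count the $j$-subsets $S\subseteq[n]$ with $\max S = m$. Such a subset must contain $m$, and its other $j-1$ elements must be chosen from $\{1,\ldots,m-1\}$. This gives exactly $\binom{m-1}{j-1}$ subsets. Dividing by the total number $\binom{n}{j}$ of $j$-subsets yields
\[\Prob\left(\max_{1\le i\le j}\pi(i)=m\right)=\frac{\binom{m-1}{j-1}}{\binom{n}{j}}.\]
The convention $\binom{m-1}{j-1}=0$ for $m<j$ correctly gives probability zero in that range. As a consistency check, the hockey-stick identity $\sum_{m=j}^n\binom{m-1}{j-1}=\binom{n}{j}$ confirms that the probabilities sum to one.

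No step is a real obstacle. The only point requiring care is the reduction in the first paragraph, namely that the unordered set of the first $j$ values is uniform. One could alternatively argue by exchangeability of the positions of $\pi$, but the direct count above is cleaner and is the argument I would present.
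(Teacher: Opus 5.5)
Your proof is correct and follows the same route as the paper: the values in the first $j$ positions form a uniformly random $j$-subset of $[n]$, and the maximum equals $m$ exactly for the $\binom{m-1}{j-1}$ subsets that contain $m$ and draw the other $j-1$ elements from $[m-1]$. The only difference is that you justify uniformity of that subset with the explicit $j!\,(n-j)!$ count, whereas the paper simply asserts it.
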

\begin{proof}
    The first $j$ entries of a uniformly random permutation are a uniformly random ordered $j$-subset of $[n]$. The event that their maximum is $m$ occurs exactly when $m$ is selected and the other $j-1$ entries are chosen from $[m-1]$. There are $\binom{m-1}{j-1}$ such choices out of $\binom{n}{j}$ total.
\end{proof}
We also need a counting lemma for the number of fixed points in a truncated permutation. We define fixed points of $k$-permutations analogously to those of permutations. For example, $\sigma=3285$, considered as a $4$-permutation of $[11]$, has one fixed point, since $\sigma(2)=2$.
\begin{lemma}
Let $Q(k,m,s)$ be the probability that a random $k-1$-permutation of $[m-1]$ has exactly $s$ fixed points. We have
\begin{equation}
\label{formula for qkms}
Q(k,m,s)=\frac{1}{s!}\sum_{t=s}^{k-1}\frac{(-1)^{t-s}(k-1)_t}{(t-s)!(m-1)_t}.
\end{equation}
\end{lemma}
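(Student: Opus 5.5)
We prove the formula in \eqref{formula for qkms} by inclusion--exclusion, in the same spirit as the derangement formula used for Equation \eqref{ordinary fixed points} in Proposition \ref{uniform case convergences}. Throughout, set $j=k-1$ and $N=m-1$, so that we are working with a uniformly random $j$-permutation $\sigma=\sigma(1)\cdots\sigma(j)$ of $[N]$; we assume $j\le N$, as is implicit in the statement. A fixed point is a position $i\in[j]$ with $\sigma(i)=i$. The $j$-permutations are exactly the injective maps $[j]\to[N]$, and there are $(N)_j$ of them, all equally likely.

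\textbf{Step 1 (factorial moments).} For a set $T\subseteq[j]$ with $|T|=t$, we count the $j$-permutations satisfying $\sigma(i)=i$ for all $i\in T$. The remaining $j-t$ positions must be filled injectively from the $N-t$ unused values, which can be done in $(N-t)_{j-t}$ ways. Hence
\[\Prob(\sigma(i)=i \text{ for all } i\in T)=\frac{(N-t)_{j-t}}{(N)_j}=\frac{1}{(N)_t},\]
using the identity $(N)_j=(N)_t\,(N-t)_{j-t}$. Summing over all $T$ of size $t$, we obtain
\[S_t:=\sum_{|T|=t}\Prob(\sigma(i)=i\ \forall i\in T)=\binom{j}{t}\frac{1}{(N)_t}=\frac{(j)_t}{t!\,(N)_t}.\]

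\textbf{Step 2 (Bonferroni / inclusion--exclusion for exactly $s$).} We apply the standard identity
\[\Prob(\text{exactly } s \text{ of the events occur})=\sum_{t=s}^{j}(-1)^{t-s}\binom{t}{s}S_t.\]
Substituting the value of $S_t$ from Step 1 gives
\[\binom{t}{s}\frac{(j)_t}{t!\,(N)_t}=\frac{(j)_t}{s!\,(t-s)!\,(N)_t}.\]
Summing over $t$, and then replacing $j=k-1$ and $N=m-1$, yields exactly \eqref{formula for qkms}.

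\textbf{Main point of care.} The only genuinely nontrivial ingredient is the identity $(N-t)_{j-t}/(N)_j=1/(N)_t$. This identity is what turns the per-set probabilities into the clean expression $(k-1)_t/(m-1)_t$. We should also check the edge cases. When $j=0$, the formula correctly gives $\1(s=0)$. When $s>j$, the sum is empty and equals $0$.
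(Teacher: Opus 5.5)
Your proposal is correct and follows essentially the same route as the paper: inclusion--exclusion, using the count $(m-1-t)!/(m-k)!$ of $(k-1)$-permutations of $[m-1]$ that fix a prescribed $t$-set, combined with the exactly-$s$ formula $\sum_{t\ge s}(-1)^{t-s}\binom{t}{s}S_t$. Your version also writes out the simplification via $(N)_j=(N)_t(N-t)_{j-t}$ and checks the edge cases, both of which the paper leaves to the reader.
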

\begin{proof}
    We use the principle of inclusion-exclusion to calculate $Q$. The total number of possible permutations is $(m-1)!/(m-k)!$. To calculate the number of permutations with at least $t$ fixed points, first choose $t$ elements from $[k-1]$ to fix. Then, permute $k-t-1$ of the $m-t-1$ remaining elements. There are $(m-t-1)!/(m-k)!$ ways to do this. Hence,
    \[Q(k,m,s)=\sum_{t=s}^{k-1}(-1)^{t-s}\binom{t}{s}\binom{k-1}{t}\frac{(m-t-1)!(m-k)!}{(m-k)!(m-1)!} \]
    which, after simplifying, equals the expression for $Q(k,m,s)$ in Equation $\eqref{formula for qkms}$.
\end{proof}
We have enough tools to prove the convergence in distribution of the deterministically-indexed model.
\begin{proposition}
\label{prop convergence of deterministic}
    Let $\pi$ be a uniformly random permutation of $[n]$ and let $0<a\leq 1$ be a constant. Then, we have
    \begin{equation}
    \label{convergence of deterministic model}
    L_{a,n}:=n-\max_{1\leq i\leq \lfloor an \rfloor} (\pi(i))+\sum_{i=1}^{\lfloor an \rfloor} \1(\pi(i)=i)\Rightarrow_d X+Y
    \end{equation}
    where $X$ and $Y$ are independent random variables with $X\sim\Poi(a)$ and $Y$ is a geometric random variable with parameter equal to $a$ and $P(Y=k)=a(1-a)^k$. If $a_n\in(0,1]$ is a sequence with $a_n\to a\in(0,1]$ as $n\to\infty$ such that for some $\alpha>0$, we have $a_n>\alpha>0$ for all $n$, then $L_{a_n,n}$ has the same limiting distribution as $L_{a,n}$ in equation $\eqref{convergence of deterministic model}$.
\end{proposition}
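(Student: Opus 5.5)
Write $j=\lfloor an\rfloor$, $M=\max_{1\le i\le j}\pi(i)$ and $Y_n=n-M$, $X_n=\sum_{i=1}^{j}\1(\pi(i)=i)$, so that $L_{a,n}=X_n+Y_n$. The approach is to condition on $M$: given $M=m$, the remaining $j-1$ values are uniform among ordered $(j-1)$-subsets of $[m-1]$ placed in the first $j$ positions other than the position of $m$. The plan is to show that $Y_n$ converges to the geometric law and that, conditional on $Y_n=y$ with $y$ fixed, $X_n$ converges to $\Poi(a)$. This is exactly the product structure in the convolution formula \eqref{pmf of x+y}. Dominated convergence in the sum over $y$, together with tightness of $Y_n$, then gives $\Prob(L_{a,n}=\ell)\to\Prob(X+Y=\ell)$ for each $\ell$, which is the convergence in \eqref{convergence of deterministic model}.

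\textbf{Step 1 (the geometric part).} By Lemma \ref{max of uniformly random} with $m=n-y$,
\[
\Prob(Y_n=y)=\frac{\binom{n-y-1}{j-1}}{\binom{n}{j}}=\frac{j}{n}\prod_{i=1}^{y}\frac{n-j-i+1}{n-i+1}\to a(1-a)^y .
\]
These limits sum to $1$, so $Y_n$ is tight.

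\textbf{Step 2 (the Poisson part, conditionally).} Fix $y$ and put $m=n-y$. The entry $m$ sits in a uniformly random position $p\in[j]$, and it is a fixed point only if $p=m$. Since $m=n-y>j$ for large $n$ when $a<1$, this cannot happen. The other $j-1$ positions carry a uniform injection into $[m-1]$. Their fixed points can be counted by the same inclusion--exclusion as in the lemma giving $Q(k,m,s)$, with the index set being the $j-1$ positions other than $p$, each position $i\le m-1$ being eligible. This gives a probability of the form $Q(j,m,s)$ up to the $O(1/n)$ effect of the missing position $p$. The terms of \eqref{formula for qkms} are
\[
\frac{(-1)^{t-s}(k-1)_t}{s!(t-s)!(m-1)_t}\to \frac{(-1)^{t-s}a^t}{s!(t-s)!}
\]
with $k=j$, since $(j-1)_t/(n-y-1)_t\to a^t$ for fixed $t$. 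Each term is bounded by $1/(s!(t-s)!)$, so dominated convergence (Tannery) gives $Q\to e^{-a}a^s/s!$.

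\textbf{Step 3 (combining).} Combining the two steps,
\[
\Prob(L_{a,n}=\ell)=\sum_{y=0}^{\ell}\Prob(Y_n=y)\,\Prob(X_n=\ell-y\mid Y_n=y)\to\sum_{y=0}^{\ell}a(1-a)^y\frac{e^{-a}a^{\ell-y}}{(\ell-y)!},
\]
which is \eqref{pmf of x+y}. The sum is finite, so no tail control is needed beyond the pointwise limits. For the variant with $a_n\to a$, I would repeat the same computations with $j=\lfloor a_n n\rfloor$. All the limits used depend only on $j/n\to a$, and the lower bound $a_n>\alpha$ keeps $j\to\infty$, so the ratios $\binom{n-y-1}{j-1}/\binom nj$ and $(j-1)_t/(m-1)_t$ have the same limits.

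\textbf{Main obstacle.} The hardest step is the boundary case $a=1$ (and $a_n\to1$). There $n-j$ may be small, the geometric limit degenerates to $Y=0$, and $m$ could lie in $[j]$, so the position of $m$ can create a fixed point. In this case I would argue directly. First, $\Prob(Y_n\ge1)\to0$. Second, the first $j$ positions are then essentially the full permutation, so $X_n$ differs from the fixed-point count of $\pi$ by at most the number of fixed points among the last $n-j=o(n)$ positions, whose expectation is $(n-j)/n\to0$. Proposition \ref{uniform case convergences} and Lemma \ref{slutsky ii} then give $\Poi(1)$, which matches $X+Y$ with $Y\equiv0$. I would also need to check carefully that the missing position $p$ in Step 2 perturbs the inclusion--exclusion by only $O(1/n)$ uniformly in $t$; this is the one place I expect real bookkeeping.
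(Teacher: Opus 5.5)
Your proposal is correct and is essentially the paper's argument: condition on the maximum of the first $\lfloor an\rfloor$ entries, read off the geometric factor from Lemma~\ref{max of uniformly random}, get the Poisson factor from $Q(k,m,s)$, and pass to the limit in the finite sum over $n-m\le\ell$. Two remarks: your Step 1 product should have denominators $n-i$, giving $(n-1)_y$ rather than $(n)_y$ (a harmless slip the paper shares), and your Tannery bound and separate treatment of $a_n\to1$ usefully tighten the paper's ``uniform on compact subsets'' remark; the $O(1/n)$ bookkeeping you anticipate in Step 2 does not arise, because when $m>\lfloor an\rfloor$ every one of the $\lfloor an\rfloor-1$ positions other than that of $m$ lies in $[m-1]$ and the inclusion--exclusion count depends only on the number of eligible positions, so the conditional law is exactly $Q(\lfloor an\rfloor,m,\cdot)$, which is what the paper's ``without loss of generality $\pi'(k)=m$'' expresses.
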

\begin{proof}
    Throughout the proof, we let $Q(k,m,s)$ be the probability that a random $(k-1)$-permutation of $[m-1]$ has exactly $s$ fixed points as in Equation $\ref{formula for qkms}$.
    
    We prove the result for a constant parameter $a$ first. Let us calculate the probability mass function of $L_{a,n}$. Let $M_{an}=\max_{1\leq i\leq \lfloor an \rfloor} (\pi(i))$. We will condition on the value of $M_{an}$. By Lemma $\ref{max of uniformly random}$, we have
    \begin{equation}
    \label{distribution of Ma_n}
    \Prob(M_{an}=m)=\frac{\binom{m-1}{\lfloor an\rfloor-1}}{\binom{n}{\lfloor an\rfloor}}.
    \end{equation}
    Set 
    \[H_{an}=\sum_{i=1}^{\lfloor an \rfloor} \1(\pi(i)=i).\]
    Fix an integer $\ell\geq0$. We want to find the conditional distribution $\Prob(H_{an}=\ell -n+m\mid M_{an} =m)$. This boils down to solving the following counting problem. Given that the maximum of a random $\lfloor an\rfloor$-permutation $\pi'$ of $[n]$ is equal to $m$, we need to find the probability that $\pi'$ has $\ell-n+m$ fixed points. Note that when $m=\lfloor an\rfloor$, then by assumption $\pi'$ has maximum element $\lfloor an\rfloor$. By the pigeonhole principle, $\pi'$ is a uniformly random permutation of $\lfloor an\rfloor$ in this case. Therefore, it is immediate that
    \begin{equation}
    \label{m equals an case}
    \Prob(H_{an}=\ell -n+\lfloor an\rfloor\mid M_{an} =\lfloor an\rfloor)=Q(\lfloor an\rfloor,\lfloor an\rfloor,\ell-n+\lfloor an\rfloor).
    \end{equation}
    On the other hand, suppose that $m>\lfloor an\rfloor$. Set $k=\lfloor an\rfloor$, and let $\pi'$ be a uniformly random $k$-permutation of $[n]$ conditioned to have $m$ as its maximal element. Since $m>k$, the position of $\pi'$ in which $m$ is placed cannot be a fixed point. Without loss of generality, we may assume that $\pi'(k)=m$. This is because all of the positions of $\pi'$ are equally likely to be fixed, and $m$ is equally likely to be in any position. Now only elements of $[m-1]$ can be placed in the remaining first $k-1$ positions of $\pi'$ by assumption, since $m$ is the maximal element. Therefore, we can count the number of fixed points of a random $(k-1)$-permutation of $[m-1]$.  Combining this reasoning with Equation $\ref{m equals an case}$, we have, for any $m\geq \lfloor an\rfloor$, 
    \begin{equation}
    \label{distribution of phi^a_n}
    \Prob(H_{an}=\ell-n+m\mid M_{an}=m)=Q(\lfloor an\rfloor,m,\ell-n+m).
    \end{equation}
    By the law of total probability, Equation $\eqref{distribution of phi^a_n}$, and Equation $\eqref{distribution of Ma_n}$, we have
    \begin{equation*}
        \begin{aligned}
            \Prob(L_{a,n}=\ell)& =\sum_{m=\lfloor an\rfloor}^{n}Q(\lfloor an\rfloor,m,\ell -n+m)\cdot \Prob(M_{an,n}=m) \\
                            & =\sum_{m=\lfloor an\rfloor}^{n}Q(\lfloor an\rfloor,m,\ell -n+m)\cdot\frac{\binom{m-1}{\lfloor an\rfloor-1}}{\binom{n}{\lfloor an\rfloor}} \\
                            &=\sum_{j=0}^{\ell}Q(\lfloor an\rfloor,n-j,\ell -j)\cdot \frac{\binom{n-j-1}{\lfloor an\rfloor-1}}{\binom{n}{\lfloor an\rfloor}} \\
        \end{aligned}
    \end{equation*}
    where in the last equality, we re-indexed, setting $j=n-m$. 
    The interval of summation does not depend on $n$, so we may consider the asymptotics of the summand. For the remainder of the proof, we will use the falling factorial estimate 
    \[(n)_j\sim n^j\;\;\;\text{as}\;\;\; n\to\infty\]
    freely. We have
    \[\frac{\binom{n-j-1}{\lfloor an\rfloor-1}}{\binom{n}{\lfloor an\rfloor}}=\frac{\lfloor an\rfloor}{n}\cdot\frac{(n-\lfloor an\rfloor)_j}{(n)_j}\sim a\cdot(n(1-a))^j/(na)^j\to a(1-a)^j.\]
    On the other hand, using Lemma $\ref{formula for qkms}$, we have
    \begin{equation*}
        \begin{aligned}
            Q(\lfloor an\rfloor,n-j,\ell -j) & \sim \frac{1}{(\ell-j)!}\sum_{t=\ell-j}^{an}\frac{(-1)^{t-(\ell-j)}(an)^t}{(t-(\ell -j))!(n-j)^t} \\
            & \sim \frac{1}{(\ell-j)!}\sum_{t=\ell-j}^{an}\frac{(-1)^{t-(\ell-j)}a^t}{(t-(\ell -j))!} \\
            & =\frac{a^{\ell -j}}{(\ell-j)!}\sum_{t=0}^{an-(\ell -j)}\frac{(-1)^t a^t}{t!} \\
            & \to \frac{e^{-a}\cdot a^{\ell -j}}{(\ell -j)!}.
        \end{aligned}
    \end{equation*}
    where the limit is taken as $n\to\infty$. We conclude that
    \begin{equation}
    \label{expression for poisson-geometric limit deterministic}
        \lim_{n\to\infty} \Prob(L_{a,n}=\ell)=\sum_{j=0}^{\ell}\frac{a(1-a)^j e^{-a}a^{\ell -j}}{(\ell -j)!}=\Prob(X+Y=\ell)
    \end{equation}
    where the probability mass function of $X+Y$ is from Equation $\eqref{pmf of x+y}$. Hence $L_{a,n}\Rightarrow_d X+Y$. The same argument is uniform for any constant $a$ in compact subsets of $(0,1]$. Therefore, if $a_n\to a$ and the sequence $(a_n)$ is bounded away from zero, then $L_{a_n,n}$ has the same distributional limit as $L_{a,n}$.
\end{proof}
\begin{figure}   
\centering

\includegraphics[height=0.25\textheight]{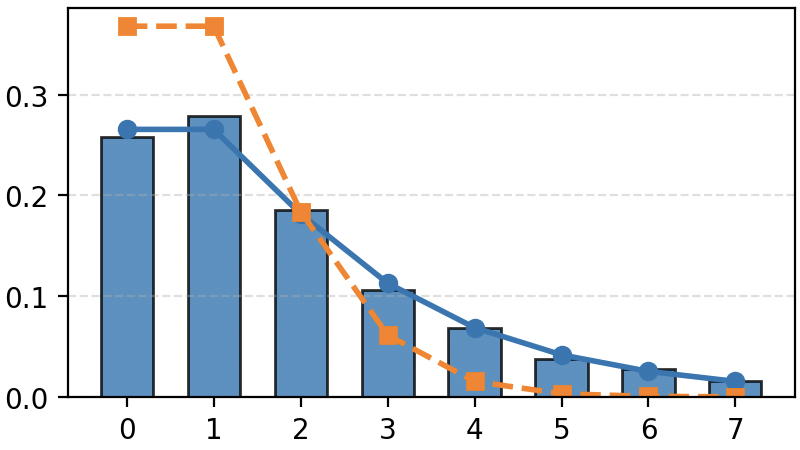}

\vspace{0.3cm}

\includegraphics[height=0.25\textheight]{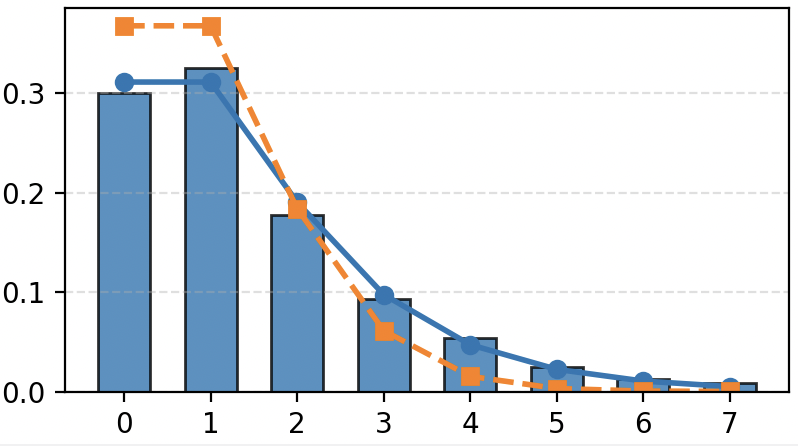}

\vspace{0.3cm}

\includegraphics[height=0.25\textheight]{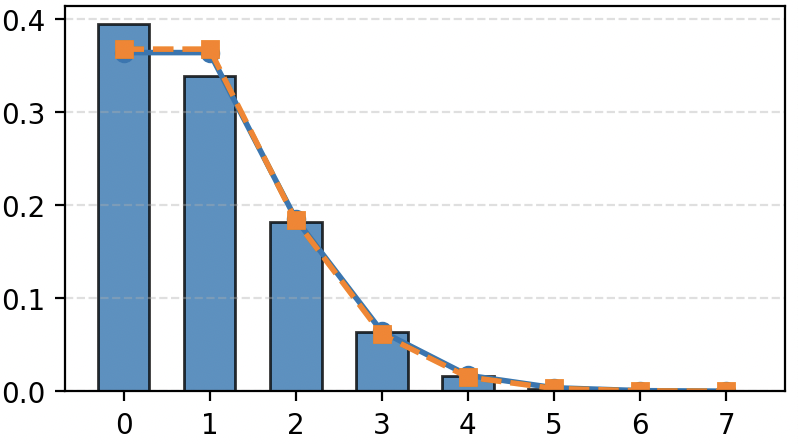}

\caption{Histograms from $2000$ trials of the number of fixed points of $n=10000$-card decks after iterated random-to-top shuffles, normalized to form probability distributions. The overlays are linearly interpolated versions of the predicted Poisson--geometric convolutions from Theorem~\ref{fixed points main} in blue and a $\Poi(1)$ distribution in orange. The top histogram corresponds to $r=5000$ shuffles, the middle to $r=10000$ shuffles, and the bottom to $r=20000$ shuffles.}
\label{fig: fixed points}
\end{figure}
We now turn to the corresponding statistic indexed by $K_{cn,n}$. The next lemma helps control the fluctuation between the deterministic and random-indexed statistics.
\begin{lemma}
\label{bounds on max and min}
Let $X$ be a random variable such that $X\geq 0$ a.s. and $\Ex [X]=\mu<\infty$. Then, we have
\[\Ex\left[\frac{\min(\mu,X)}{\max(\mu,X)}\right]\geq 1-\frac{\Ex|X-\mu|}{\mu}.\]
\end{lemma}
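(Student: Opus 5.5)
We prove the bound pointwise and then take expectations. We assume $\mu>0$; this is implicit in the statement, since it divides by $\mu$. With $\mu>0$ the ratio $\min(\mu,X)/\max(\mu,X)$ is well defined, because $\max(\mu,X)\geq\mu>0$. The key observation is the pointwise inequality
\[
\frac{\min(\mu,X)}{\max(\mu,X)}\;\geq\;1-\frac{|X-\mu|}{\mu},
\]
valid for every outcome with $X\geq 0$. Once this holds, taking expectations and using linearity gives exactly the claimed bound.

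To verify the pointwise inequality, we split into two cases according to which of $X$ and $\mu$ is larger.

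\emph{Case $X\leq\mu$.} The left side is $X/\mu$, and the right side is $1-(\mu-X)/\mu=X/\mu$. So equality holds.

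\emph{Case $X>\mu$.} The left side is $\mu/X$, and the right side is $1-(X-\mu)/\mu=(2\mu-X)/\mu$. We need $\mu/X\geq (2\mu-X)/\mu$. Since $X>0$ and $\mu>0$, this is equivalent to $\mu^2\geq 2\mu X-X^2$. That rearranges to $(X-\mu)^2\geq 0$, which is true. (If $X>2\mu$, the right side is already negative, so the inequality is trivial anyway.)

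Finally, we take expectations. The left side is bounded between $0$ and $1$, so its expectation exists. The right side has finite expectation because $\Ex|X-\mu|\leq 2\mu<\infty$. Monotonicity of expectation then yields
\[
\Ex\left[\frac{\min(\mu,X)}{\max(\mu,X)}\right]\geq 1-\frac{\Ex|X-\mu|}{\mu}.
\]

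There is no real obstacle here. The only point needing care is the case $X>\mu$, where the inequality reduces to a perfect square.
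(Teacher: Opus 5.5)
Your proof is correct and follows the paper's argument. Both prove the pointwise bound $1-\min(\mu,X)/\max(\mu,X)\le |X-\mu|/\mu$ by splitting into the cases $X\le\mu$ and $X>\mu$, then take expectations. Your reduction of the case $X>\mu$ to $(X-\mu)^2\ge 0$, and your remarks on $\mu>0$ and integrability, fill in details the paper leaves implicit.
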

\begin{proof}
If $X\leq \mu$, then $\min(\mu,X)/\max(\mu,X)=X/\mu$. Otherwise, $X\geq \mu$, and then the ratio equals $\mu/X$. In either case,
\[
1-\frac{\min(\mu,X)}{\max(\mu,X)}\leq \frac{|X-\mu|}{\mu}.
\]
Taking expectations yields the claim.
\end{proof}

We are now ready to prove Theorem $\ref{fixed points main}$.

\begin{proof}[Proof of Theorem $\ref{fixed points main}$]
    Let
    \[
    a_n:=\frac{1}{n}\Ex[K_{cn,n}].
    \]
    By Proposition~\ref{facts about krn}, we have $a_n\to 1-e^{-c}$ and, moreover, $a_n\ge 1-e^{-c}$ for all $n$. By Proposition~\ref{prop convergence of deterministic}, the deterministic model $L_{a_n,n}$ from Equation $\eqref{convergence of deterministic model}$ converges in distribution to $X+Y$, where $X$ and $Y$ are independent, $X\sim \Poi(1-e^{-c})$, and $Y$ is geometric with $P(Y=k)=(1-e^{-c})e^{-ck}$.

It remains to compare $L_{a_n,n}$ with the random-indexed model
    \[
    L_{(n^{-1}K_{cn,n}),n}
    =
    n-\max_{1\le i\le K_{cn,n}}\pi(i)+\sum_{i=1}^{K_{cn,n}}\mathbf{1}(\pi(i)=i).
    \]
    By Theorem $\ref{decomposition of fixed points}$, we have that $L_{(n^{-1}K_{cn,n}),n}=_d F_{cn,n}$. Let 
    \[M_{K_{cn,n}}=\max_{1\leq i\leq K_{cn,n}} \pi(i)\;\;\;\;\text{and}\;\;\;\;M_{a_n n}=\max_{1\leq i\leq \lfloor a_n n\rfloor } \pi(i),\]
    and set 
    \[H_{K_{cn,n}}=\sum_{i=1}^{K_{cn,n}} \1(\pi(i)=i)\;\;\;\;\text{and}\;\;\;\;H_{a_n n}=\sum_{i=1}^{\lfloor a_n n\rfloor} \1(\pi(i)=i).\]
    We have
    \[L_{a_n,n}-L_{(n^{-1}K_{cn,n}),n}=H_{a_n n}-H_{K_{cn,n}}+M_{K_{cn,n}}-M_{a_nn},\]
    so it suffices to show that $H_{K_{cn,n}
    }-H_{a_nn}$ and $M_{K_{cn,n}}-M_{a_nn}$ both converge to zero in probability. We first control the fixed-point term.
    By the law of iterated expectations, Proposition $\ref{facts about krn}$, and Cauchy-Schwarz, we have
    \begin{equation}
    \label{cauchy schwarz and markov}   
        \Ex|H_{a_n n}-H_{K_{cn,n}}| = \frac{\Ex|K_{cn,n}-\Ex[K_{cn,n}]|}{n} \leq \frac{\sqrt{\Var K_{cn,n}}}{n}\to 0 
    \end{equation}
    as $n\to\infty$. Therefore, $H_{a_n n}-H_{K_{cn,n}}\to 0$ in $L^1$, and hence in probability.
    
    The next step is to control the maximum term $M_{K_{cn,n}}-M_{a_nn}$. Let $\alpha_n=\min(\Ex[K_{cn,n}],$ $K_{cn,n})$, and let $\beta_n=\max(\Ex[K_{cn,n}],$ $K_{cn,n})$. We see that $M_{K_{cn,n}}-M_{a_nn}\neq 0$ if and only if $\max_{i\in [\alpha_n]}(\pi(i))$ $<\max_{\alpha_n +1\leq i\leq\beta_n} (\pi(i))$. We have, for all $\varepsilon\in (0,1)$,
    \begin{equation*}
        \begin{aligned}
            \Prob(|M_{K_{cn,n}}-M_{a_nn}|>\varepsilon) & =\Prob\left(\max_{i\in [\alpha_n]}(\pi(i))<\max_{\alpha_n +1\leq i\leq\beta_n} (\pi(i))\right) \\
            & =\Ex\left[\Prob\left(\max_{i\in [\alpha_n]}(\pi(i))<\max_{\alpha_n +1\leq i\leq\beta_n} (\pi(i))\right)\mid K_{r,n}\right] \\
            & =\Ex\left[(\beta_n-\alpha_n)/\beta_n\right] \\
            & =1-\Ex\left[\alpha_n/\beta_n\right]
    \leq \frac{\Ex|K_{r,n}-\Ex[K_{r,n}]|}{\Ex[K_{r,n}]}
        \end{aligned}
    \end{equation*}
    where we used Lemma $\ref{bounds on max and min}$ in the last inequality. The argument above also works for $\varepsilon\geq 1$ by monotonicity. Using the Cauchy-Schwarz bound from Equation $\eqref{cauchy schwarz and markov}$ together with Proposition $\ref{facts about krn}$, we conclude that $M_{K_{cn,n}}-M_{a_nn}\to 0$ in probability. Combining this with the convergence in probability of the fixed-point term $H_{a_n n}-H_{K_{cn,n}}$, we deduce that $L_{a_n,n}-L_{(n^{-1}K_{cn,n}),n}\to 0$ in probability. By Lemma $\ref{slutsky ii}$, we conclude Theorem $\ref{fixed points main}$ in the critical regime. 
    
    We now treat the mixed regime. Let $\pi$ be a uniformly random permutation of $[n]$, and define
    \[M_{K_{r,n}}=\max_{1\leq i\leq K_{r,n}}\pi(i)\;\;\;\text{and}\;\;\; H_{K_{r,n}}=\sum_{i=1}^{K_{r,n}} 1(\pi(i)=i).\]
    Observe that, by Proposition $\ref{facts about krn}$,
    \begin{equation*}
        \begin{aligned}
            \Prob\left(M_{K_{r,n}}=n\right)& =\sum_{k=1}^n\Prob(M_{K_{r,n}}=n\, |\, K_{r,n}=k)\Prob(K_{r,n}=k) \\
            & =\sum_{k=1}^n\frac{k}{n}\Prob(K_{r,n}=k) \\
            &=\frac{\Ex[K_{r,n}]}{n}=1-\left(1-\frac{1}{n}\right)^r.
        \end{aligned}
    \end{equation*}
    This probability tends to $1$ when $r=\omega(n)$, so we conclude that $n-M_{K_{r,n}}\to 0$ in probability in this case. By Theorem $\ref{slutsky}$, $F_{r,n}$ and $H_{K_{r,n}}$ have the same limiting distribution when $r=\omega(n)$. We can write
    \[H_{K_{r,n}}=\sum_{i=1}^n \1(\pi(i)=i) -\underbrace{\sum_{i=K_{r,n}+1}^n \1(\pi(i)=i)}_{:=\,G_{r,n}}.\]
    By the law of iterated expectations, we have
    \begin{equation*}
            \Ex[G_{r,n}]= \frac{\Ex[n-K_{r,n}]}{n} =(1-1/n)^r\to 0
    \end{equation*}
    as $n\to\infty$ when $r=\omega(n)$. This implies that $G_{r,n}\to 0$ in $L^1$ and hence in probability in this case. Applying Theorem $\ref{slutsky}$ again, we see that $F_{r,n}$ and the number of fixed points of a uniformly random permutation share the same limiting distribution when $r=\omega(n)$. The convergence in distribution of $F_{r,n}$ to a $\Poi(1)$ random variable when $r=\omega(n)$ hence follows from Proposition $\ref{uniform case convergences}$. This concludes the proof of Theorem $\ref{fixed points main}$.
\end{proof}

\section{Descents}
\label{Section 6}
In this section, we investigate the limiting distribution of the number of descents after iterated top-to-random shuffles and develop the tools needed to prove Theorem $\ref{descents main}$. We conclude the section with its proof.

We begin by analyzing a centered version of the decomposition in Theorem $\ref{decomposition of descents}$, where the random index is replaced by a deterministic sequence. In this setting, we establish convergence of the normalized descent statistic.

\begin{proposition}
\label{convergence of centered descents}
Let $a_n\in (0,1)$ be a deterministic sequence such that $a_n\to a\in(0,1)$. Let $(U_i)_{i\geq1}^n$ be i.i.d. continuous uniform random variables on $(0,1)$. Set
\begin{equation}
\label{deterministic descents}
    f(a_n)=\sum_{i=1}^{\lfloor a_n n\rfloor -1} \left(\1(U_i>U_{i+1})-\frac{1}{2}\right).
\end{equation}
Then, we have
\[\frac{f(a_n)}{\sqrt{n}}\Rightarrow_d\mathcal{N}(0,a/12).\]
\end{proposition}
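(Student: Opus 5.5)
Write $m_n=\lfloor a_n n\rfloor-1$ and $X_i=\1(U_i>U_{i+1})-\tfrac12$, so that $f(a_n)=\sum_{i=1}^{m_n}X_i$. The $X_i$ form a stationary, centered, bounded one-dependent sequence, as already noted in the proof of Proposition \ref{uniform case convergences}. The plan is to treat the uniform case $f(a_n)$ with $a_n\equiv 1$, namely Equation \eqref{ordinary descents}, as the basic input, and then rescale. Let $S_m=\sum_{i=1}^{m}X_i$. Equation \eqref{ordinary descents}, via Equation \eqref{decomposition of uniformly random descents}, says
\[
\frac{S_{n-1}}{\sqrt n}\Rightarrow_d\mathcal N(0,1/12).
\]
Since $(n-1)/n\to1$, this also gives $S_m/\sqrt m\Rightarrow_d\mathcal N(0,1/12)$ as $m\to\infty$ along the integers.

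Because the law of $S_m$ depends only on $m$, we have $S_{m_n}/\sqrt{m_n}\Rightarrow_d\mathcal N(0,1/12)$, using only that $m_n\to\infty$, which holds because $a>0$. Next write
\[
\frac{f(a_n)}{\sqrt n}=\sqrt{\frac{m_n}{n}}\cdot\frac{S_{m_n}}{\sqrt{m_n}},
\]
and note $m_n/n\to a$. By Slutsky's theorem in its multiplicative form (or the continuous mapping theorem combined with Lemma \ref{slutsky ii}, after observing that $(\sqrt{m_n/n}-\sqrt a)\,S_{m_n}/\sqrt{m_n}\to0$ in probability since the second factor is tight), we get
\[
\frac{f(a_n)}{\sqrt n}\Rightarrow_d\sqrt a\,\mathcal N(0,1/12)=\mathcal N(0,a/12).
\]

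As a sanity check on the constant $1/12$, compute the variance directly:
\[
\Var X_i=\tfrac14,\qquad \Cov(X_i,X_{i+1})=\Prob(U_i>U_{i+1}>U_{i+2})-\tfrac14=\tfrac16-\tfrac14=-\tfrac1{12},
\]
and all other covariances vanish. Hence
\[
\Var S_m=\tfrac m4-\tfrac{2(m-1)}{12}\sim \tfrac m{12}.
\]
Alternatively, one can apply Hoeffding's $m$-dependent central limit theorem directly to the triangular array.

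The only delicate point is the passage from a deterministic to a general index sequence. Because each $f(a_n)$ is a deterministic-length partial sum of a single fixed stationary sequence, this reduces entirely to the subsequence statement above, so no uniformity issue arises. I expect no serious obstacle; the main care is ensuring that the floor and the $-1$ in the index do not affect the scaling, which follows from $m_n/n\to a$.
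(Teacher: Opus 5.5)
Your argument is correct, but it is organized differently from the paper's. The paper proves the proposition directly. It computes the exact variance $\Var(f(a_n))=(\lfloor a_n n\rfloor+1)/12$ from one-dependence, using $\Var W_1=1/4$ and $\Cov(W_1,W_2)=-1/12$, which is exactly your sanity check. It notes this is asymptotic to $an/12$, and then applies the Hoeffding--Robbins central limit theorem for $m$-dependent sequences to the sum of length $\lfloor a_n n\rfloor-1$.

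You instead observe that the law of $f(a_n)$ depends only on its length, so it is a partial sum $S_{m_n}$ of a single stationary sequence. The uniform-permutation result \eqref{ordinary descents} then gives $S_m/\sqrt m\Rightarrow_d\mathcal N(0,1/12)$ along all integers, after absorbing the negligible shift coming from $d(\pi)-n/2=S_{n-1}-1/2$. Hence the same limit holds along $m_n\to\infty$. The deterministic factor $\sqrt{m_n/n}\to\sqrt a$ is handled by continuous mapping plus Lemma~\ref{slutsky ii}.

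What your route buys is economy and transparency. The CLT is invoked only once, inside the uniform case, and the limiting variance is inherited rather than recomputed. You also make explicit why a deterministic, possibly non-monotone, index sequence causes no uniformity issue, a point the paper leaves implicit when it cites Hoeffding--Robbins.

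What the paper's route buys is twofold. First, the exact finite-$n$ variance identity is the same computation it reuses in the proof of Theorem~\ref{descents technical} to evaluate $\Ex[(A_n-f(a_n))^2\mid K_n]$. Second, it gives a template that carries over to inversions. There the summands $R_i$ have laws depending on $n$ and $i$, so the truncated statistic is not a rescaled copy of the full one: its limiting variance is $(1-(1-a)^3)/36$, not $a/36$. Your stationarity-based reduction would therefore not apply to Proposition~\ref{deterministic convergence inversions}.
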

\begin{proof}
Let 
$m_n = \lfloor a_n n \rfloor$, and set $W_i=\1(U_i>U_{i+1})-1/2$. Then, by a covariance expansion,
\[
\Var(f(a_n))=\sum_{i=1}^{m_n-1} \Var(W_i)+2\left(\sum_{1 \le i < j \le m_n-1} \Cov(W_i, W_j)\right).
\]
Since the random variables $W_i$ are one-dependent, $\Cov(W_i,W_j)=0$ whenever $|i-j|>1$, and hence
\[
\Var(f(a_n))
=
(m_n-1)\Var(W_1)
+
2(m_n-2)\Cov(W_1, W_2).
\]
Now,
\[
\Var(W_1)=\frac{1}{4} \;\;\;\text{and}
\;\;\;
\Cov(W_1,W_2)=\Prob(U_1>U_2>U_3)-\frac{1}{4}
=\frac{1}{6}-\frac{1}{4}=-\frac{1}{12}.
\]
Therefore,
\[
\Var(f(a_n))
=
(m_n-1)\frac{1}{4}
+
2(m_n-2)\left(-\frac{1}{12}\right)
=
\frac{m_n+1}{12}.
\]
Consequently,
\[
\Var\left(\frac{f(a_n)}{\sqrt{n}}\right)
=
\frac{m_n+1}{12n}
\to \frac{a}{12},
\]
since $m_n = \lfloor a_n n \rfloor$ and $a_n \to a$.
By the central limit theorem for $m$-dependent random variables as in \cite{m-dependentclt-hoeffding}, we have that the convergence in distribution of $n^{-1/2}f(a_n)$ holds.
\end{proof}

\begin{figure}[htbp]
\centering

\includegraphics[height=0.28\textheight,keepaspectratio]{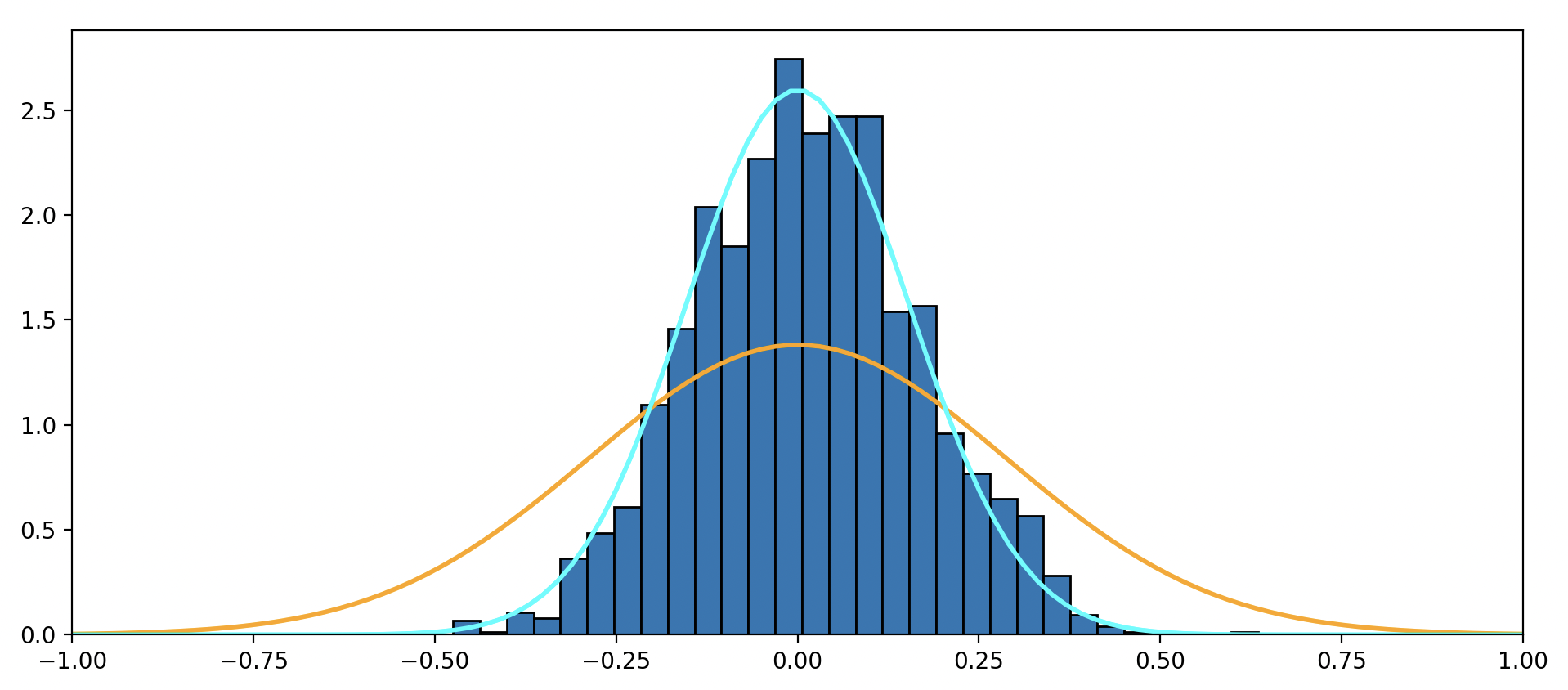}

\vspace{0.3cm}

\includegraphics[height=0.28\textheight,keepaspectratio,width=0.85\textwidth]{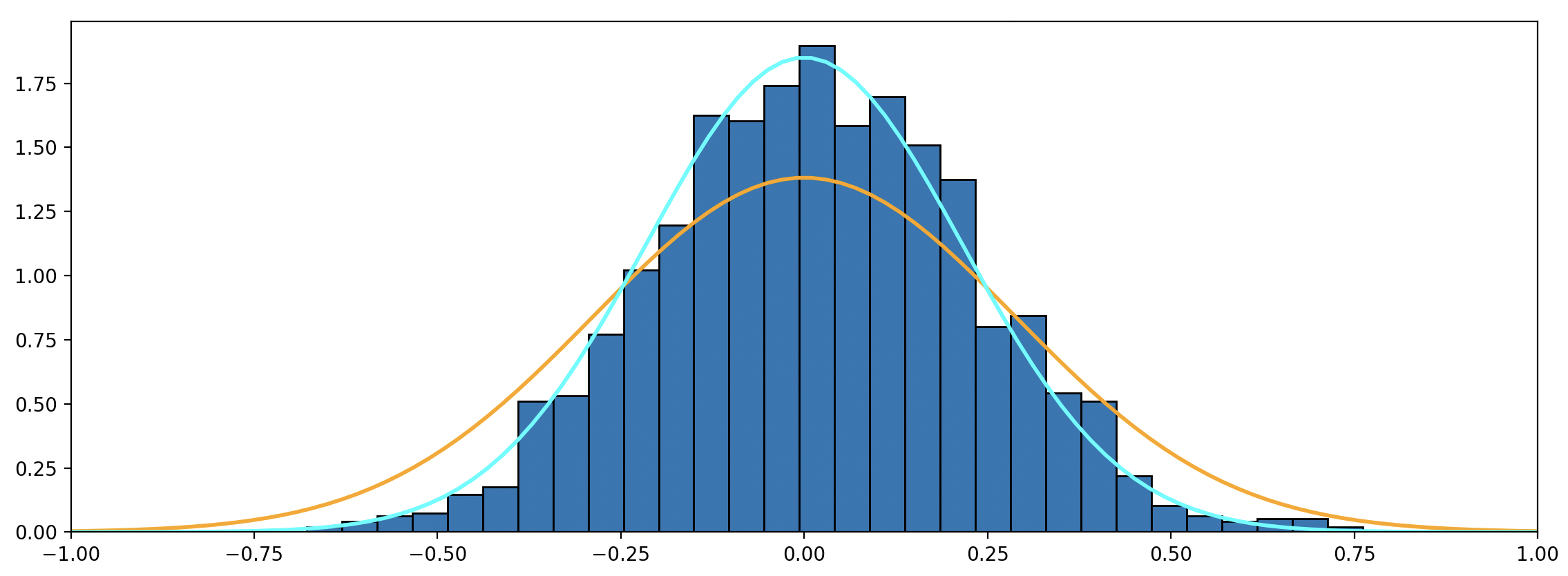}

\vspace{0.3cm}

\includegraphics[height=0.28\textheight,keepaspectratio]{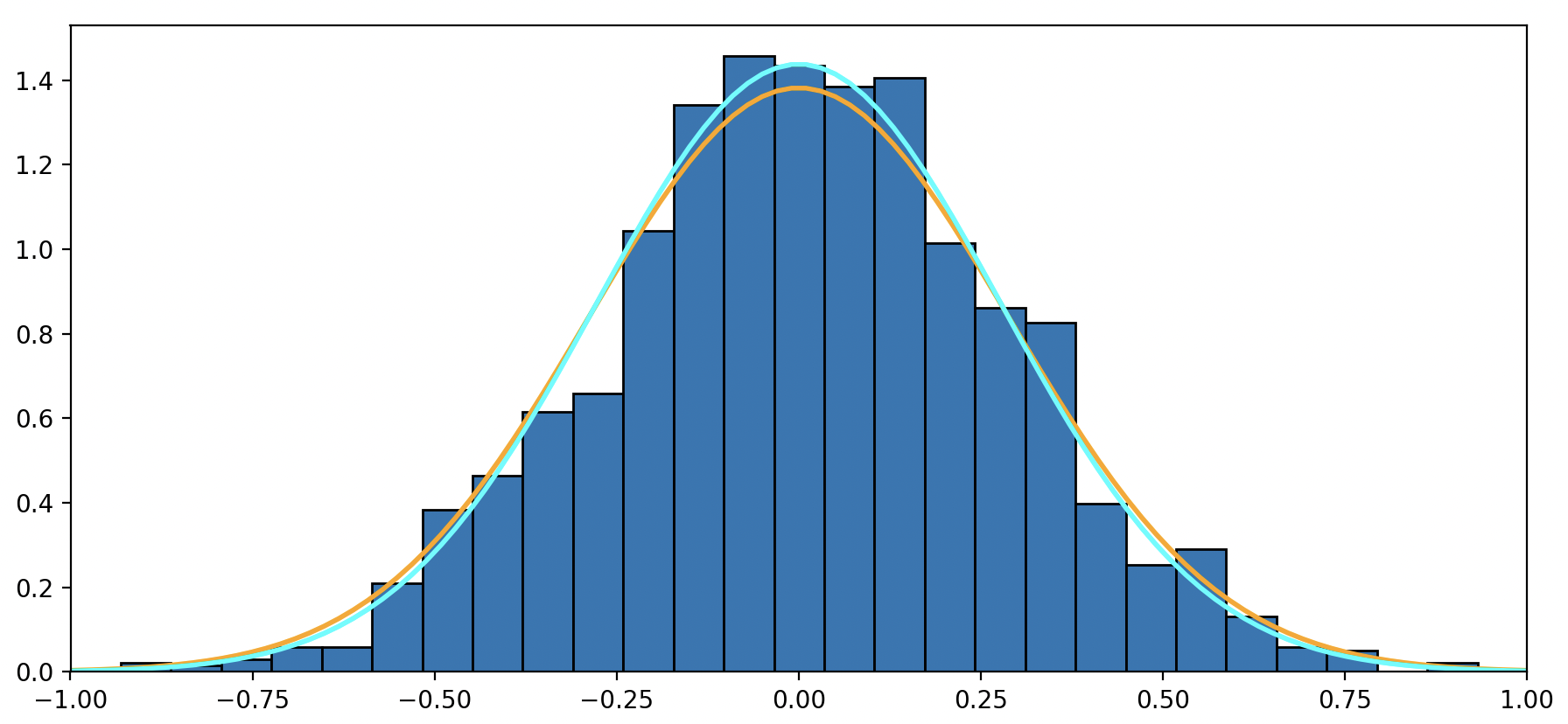}

\caption{Histograms based on $2000$ trials of the number of descents for $n=10000$ under iterated random-to-top shuffles, centered by $n(1-e^{-r/n})$ and scaled by $n^{-1/2}$, and normalized to form probability distributions. Top: $r=2500$. Middle: $r=5000$. Bottom: $r=10000$. The cyan curves show the predicted limiting densities in the $r=cn$ regime from Theorem~\ref{descents main}, while the orange curves correspond to the $\mathcal{N}(0,1/12)$ density.}
\label{fig:descents-comparison}

\end{figure}

Now consider comparing the statistic $f(a_n)$ with the corresponding randomly centered, randomly indexed statistic. We can show that in a distributional limit with $n^{-1/2}$ scaling, under proper assumptions on the indexing random variables, the randomly indexed and centered statistic can be replaced by $f(a_n)$. This leads us to the following general result.
\begin{theorem}
\label{descents technical}
    Let $K_n$ be a sequence of random variables satisfying the following assumptions.
    \begin{enumerate}
        \item $K_n/n\to a\in (0,1)$ in probability.
        \item $n^{-1/2}(K_n-\Ex K_n)\Rightarrow_d\mathcal{N}(0,\tau^2)$ for some fixed $\tau\in[0,\infty)$.
        \item $\Var K_n=O(n)$ and $\Ex[K_n]=an+o(\sqrt{n})$.
    \end{enumerate} 
    Let $U_i$ be iid continuous uniform on $(0,1)$. Set $T_n=\sum_{i=1}^{\lfloor K_n\rfloor -1}\, \1(U_i>U_{i+1})$. We have
    \[\frac{T_n-\Ex [T_n]}{\sqrt{n}}\Rightarrow_d\,\mathcal{N}\left(0,\frac{a}{12}+\frac{\tau^2}{4}\right).\]
\end{theorem}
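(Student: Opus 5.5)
We will split $T_n-\Ex[T_n]$ into a part driven by the uniforms and a part driven by $K_n$, and then combine the two. Implicit in the statement is that $K_n$ is independent of the $U_i$, as in Theorem \ref{decomposition of descents}; we assume this throughout. Write $W_i=\1(U_i>U_{i+1})-\tfrac12$ and $N_n=\lfloor K_n\rfloor-1$. Then
\[
T_n=\sum_{i=1}^{N_n}W_i+\frac{N_n}{2},\qquad \Ex[T_n]=\frac{\Ex[N_n]}{2},
\]
where the second identity uses independence and $\Ex W_i=0$. Hence
\[
\frac{T_n-\Ex T_n}{\sqrt n}=\underbrace{\frac{1}{\sqrt n}\sum_{i=1}^{N_n}W_i}_{A_n}+\underbrace{\frac{N_n-\Ex N_n}{2\sqrt n}}_{B_n}.
\]
The floor changes $N_n-\Ex N_n$ only by an amount bounded by $1$, so assumption (2) gives $B_n\Rightarrow_d\mathcal N(0,\tau^2/4)$. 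The target variance $a/12+\tau^2/4$ is exactly the sum of the two limiting variances, so the goal is to show that $A_n$ and $B_n$ become asymptotically independent, with $A_n\to\mathcal N(0,a/12)$.

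\textbf{Step 1: replace the random index in $A_n$.} Let $m_n=\lfloor an\rfloor-1$ and $A_n^\ast=n^{-1/2}\sum_{i=1}^{m_n}W_i$. By Proposition \ref{convergence of centered descents} with $a_n\equiv a$, we have $A_n^\ast\Rightarrow_d\mathcal N(0,a/12)$. Conditional on $K_n$, the difference $A_n-A_n^\ast$ is a signed sum of $|N_n-m_n|$ consecutive one-dependent centered terms. By the variance formula in the proof of Proposition \ref{convergence of centered descents}, its conditional second moment is at most $C(|N_n-m_n|+1)/n$. Taking expectations and using assumption (3),
\[
\Ex|N_n-m_n|\le \sqrt{\Var K_n}+|\Ex K_n-an|+2=O(\sqrt n),
\]
so $\Ex(A_n-A_n^\ast)^2=O(n^{-1/2})\to0$. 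Therefore $A_n-A_n^\ast\to0$ in probability. This is the step where independence of $K_n$ from the $U_i$ is essential.

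\textbf{Step 2: joint convergence.} The variable $A_n^\ast$ is a function of the $U_i$ alone, and $B_n$ is a function of $K_n$ alone, so they are independent. Their joint characteristic function factors, which gives
\[
(A_n^\ast,B_n)\Rightarrow_d (Z_1,Z_2),
\]
with $Z_1\sim\mathcal N(0,a/12)$ and $Z_2\sim\mathcal N(0,\tau^2/4)$ independent. By the continuous mapping theorem, $A_n^\ast+B_n\Rightarrow_d\mathcal N(0,a/12+\tau^2/4)$. Since $A_n+B_n-(A_n^\ast+B_n)\to0$ in probability, Lemma \ref{slutsky ii} finishes the proof.

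The main obstacle is Step 1, namely controlling the randomly indexed partial sum. Because $N_n$ is independent of the one-dependent sequence $(W_i)$, conditioning on $K_n$ reduces this to a deterministic variance bound. The $O(\sqrt n)$ fluctuation of the index then produces an $O(n^{-1/4})$ error in $L^2$. Assumption (1) is not needed beyond what assumption (3) already provides, and if $\tau=0$ the argument applies verbatim with $B_n\to0$.
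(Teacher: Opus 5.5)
Your proof is correct and follows essentially the same route as the paper. You use the same split of $T_n-\Ex[T_n]$ into $T_n-\Ex[T_n\mid K_n]$ and $\Ex[T_n\mid K_n]-\Ex[T_n]$, replace the random index by a deterministic one via the conditional one-dependent variance bound, and use independence of $K_n$ from the $U_i$ to add the two Gaussian limits. The differences are cosmetic: you compare against the index $\lfloor an\rfloor$ rather than $\lfloor \Ex K_n\rfloor$, you center $B_n$ exactly at $\Ex N_n$, and you spell out the independence assumption and the characteristic-function step, both of which the paper leaves implicit.
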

\begin{proof}
    We rewrite 
    \begin{equation}
    \label{an and bn descents}
        T_n-\Ex[T_n]=\underbrace{T_n-\Ex[T_n\mid K_n]}_{:=\,A_n}+\underbrace{\Ex[T_n\mid K_n]-\Ex[T_n]}_{:=\, B_n}.
    \end{equation}
    Let $a_n=n^{-1}\cdot\Ex[K_n]$, and let $f(a_n)$ be as in Equation $\ref{deterministic descents}$. We will show that $n^{-1/2}\cdot|A_n-f(a_n)|\to 0$ in probability. Let $W_i=\1(U_i>U_{i+1})-1/2$. Note that 
    \[A_n=\sum_{i=1}^{\lfloor K_n\rfloor -1} W_i.\] Let $\alpha_n=\min(a_n n,K_n)$, and set $\beta_n=\max(a_n n,K_n)$. We have

    \begin{equation*}
        \begin{aligned}
        \Ex[(A_n-f(a_n))^2\mid K_n]&=\sum_{i=\alpha_n-1}^{\beta_n-1}\Var W_i+2\cdot\sum_{i=\alpha_n-1}^{\beta_n-2}\text{Cov}(W_i,W_{i+1})\\
        &=\frac{\beta_n-\alpha_n+1}{12}=\frac{|K_n-a_n n|+1}{12}.
        \end{aligned}
    \end{equation*}
    Taking expectations and applying Cauchy Schwarz and Assumption (3), we have that $\Ex[(A_n-f(a_n))^2]= o(n)$,
    so that $n^{-1/2}\cdot|A_n-f(a_n)|\to 0$ in $L^2$ and hence in probability.

    We turn our attention to the random variable $B_n$ from Equation $\eqref{an and bn descents}$. By direct computation and Assumption (3), we have
    \begin{equation}
    \label{formula for bn}
        B_n=\frac{K_n-an}{2}+o(\sqrt{n}).
    \end{equation}
    By Proposition $\ref{convergence of centered descents}$ and Assumption (1), we have that 
    \[\frac{f(a_n)}{\sqrt{n}}\Rightarrow_d\,\mathcal{N}(0,a/12).\]
    Then, by Lemma $\ref{slutsky ii}$ applied to the pair of random variables $A_n$ and $f(a_n)$, along with Equation $\ref{formula for bn}$ and Assumption (2), we have
    \begin{equation*}
        \begin{aligned}
        \frac{T_n-\Ex [T_n]}{\sqrt{n}}&=\frac{A_n}{\sqrt{n}}+\frac{B_n}{\sqrt{n}}\\
        &=\frac{f(a_n)}{\sqrt{n}}+\frac{K_n-an}{2\sqrt{n}}+o_p(1)\\
        &\Rightarrow_d\,\mathcal{N}\left(0,\frac{a}{12}+\frac{\tau^2}{4}\right).
        \end{aligned}
    \end{equation*}
    The last convergence in distribution follows from the independence of $f(a_n)$ and $K_n$ since $K_n$ and the $(U_i)_{i\geq 1}^n$ are independent, along with Proposition $\ref{convergence of centered descents}$ and Assumption (2).
\end{proof}
We can now deduce the limiting distribution of the number of descents $D_{r,n}$ in the critical regime as a special case of Theorem $\ref{descents technical}$. On the other hand, the result in the mixed regime will follow from manipulating the decomposition in Theorem $\ref{decomposition of descents}$ and applying Theorem $\ref{slutsky}$.
\begin{proof}[Proof of Theorem $\ref{descents main}$]
We begin by proving the $r=cn$ case, i.e. the critical regime. This follows from Theorem $\ref{descents technical}$ applied to the occupancy random variables $K_n:=K_{cn,n}$. The variance $a/12 +\tau^2/4$ from Theorem $\ref{descents technical}$ is computed with $a=1-e^{-c}$ and $\tau^2=e^{-c}-(1+c)e^{-2c}$.
By Proposition $\ref{facts about krn}$, the random variables $K_{cn,n}$ satisfy Assumptions (1)--(3) in Proposition $\ref{descents technical}$. 

We now derive the limiting distribution in the mixed regime. From the equality in distribution in Theorem $\ref{decomposition of descents}$, we can rewrite
\[D_{r,n}=_d \,O_p(1)+\sum_{i=1}^{n-1}\1(U_i>U_{i+1})-\underbrace{\sum_{i=K_{r,n}}^{n-1}\1(U_i>U_{i+1})}_{:=\,G_{r,n}}\]
where the $U_i$ are i.i.d. continuous uniform$(0,1)$ random variables. We will find conditions on $r$ that ensure $n^{-1/2}\cdot G_{r,n}\to 0$ in probability. Using the law of iterated expectations and Proposition $\ref{facts about krn}$, we find that
\begin{equation*}
    \begin{aligned}
        n^{-1/2}\cdot \Ex[G_{r,n}]&=\frac{n-\Ex[K_{r,n}]}{2\sqrt{n}}+o(1)\\
        &=\frac{\sqrt{n}\cdot(1-1/n)^r}{2} + o(1)\\
        &\leq \frac{\sqrt{n}\cdot\exp(-r/n)}{2}+o(1).
    \end{aligned}
\end{equation*}
Hence, $G_{r,n}\to 0$ in $L^1$, and therefore in probability, when there exists a sequence $c_n\to\infty$ such that $r\gtrsim (n\log n)/2 +c_n n$. We conclude by Theorem $\ref{slutsky}$ that $n^{-1/2}\cdot D_{r,n}$ has the same limiting distribution as the number of descents of a uniformly random permutation in this case. This concludes the proof of Theorem $\ref{descents main}$.
\end{proof}

\section{Inversions}
\label{Section 7}
In this section, we develop tools to prove convergence in distribution of the number of inversions under random-to-top shuffles. The section concludes with the proof of Theorem $\ref{inversions main}$. We will use similar methods as in Section $\ref{Section 6}$ to prove the asymptotic normality of inversions. We begin by finding the expected number of inversions in a combinatorial manner.
\begin{proposition}
    Let $I_{r,n}$ be the number of inversions after $r$ iterated random-to-top shuffles of an $n$-card deck. We have
    \[\Ex [I_{r,n}]=\frac{\binom{n}{2}}{2}\left(1-\left(\frac{n-2}{n}\right)^r\,\right).\]
\end{proposition}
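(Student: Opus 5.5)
I will compute $\Ex[I_{r,n}]$ by linearity of expectation over pairs of cards, rather than through the decomposition in Theorem \ref{decomposition of inversions}. Write
\[I_{r,n}=\sum_{1\le a<b\le n}\1(\text{card } b \text{ lies above card } a \text{ after } r \text{ shuffles}).\]
It then suffices to show that for each fixed pair $a<b$ the inversion probability is $\frac12\bigl(1-((n-2)/n)^r\bigr)$. Summing this over the $\binom n2$ pairs gives the claimed formula.

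Fix $a<b$. I split according to which of the two cards were ever selected during the $r$ shuffles.
\begin{itemize}
\item \emph{Neither card selected.} This has probability $((n-2)/n)^r$, since each shuffle independently avoids both cards with probability $(n-2)/n$. By Proposition \ref{resampling uniform}, unselected cards keep their relative increasing order, so $a$ stays above $b$ and there is no inversion.
\item \emph{Exactly one card selected.} Every selected card sits above every unselected card, again by Proposition \ref{resampling uniform}. So the pair is inverted exactly when the selected card is $b$.
\item \emph{Both cards selected.} The relative order of $a$ and $b$ is decided by which was selected last; the last-selected card ends up above.
\end{itemize}

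The key step is a symmetry argument. Swapping the labels $a$ and $b$ in every shuffle is a measure-preserving bijection on sequences of selections, since each shuffle picks a card uniformly. It maps the event ``only $b$ selected'' to ``only $a$ selected,'' so these two events have equal probability. On the event ``both selected,'' it exchanges ``$b$ last'' with ``$a$ last,'' so each has conditional probability $1/2$. (Equivalently, the first $K_{r,n}$ positions form a uniformly random ordering of the selected set, by Proposition \ref{resampling uniform}.)

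Putting the cases together,
\[\Prob(\text{inversion})=\tfrac12\Prob(\text{exactly one selected})+\tfrac12\Prob(\text{both selected})=\tfrac12\Bigl(1-\bigl(\tfrac{n-2}{n}\bigr)^r\Bigr).\]

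The only delicate point is justifying the label-swap symmetry carefully; everything else is bookkeeping.

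As a sanity check, one could also use Theorem \ref{decomposition of inversions}. Since $\Ex[R_i]=(n-i)/2$, this gives $\Ex[I_{r,n}]=\Ex\bigl[K_{r,n}(2n-K_{r,n}-1)\bigr]/4$. Evaluating this requires $\Ex[K_{r,n}^2]$, which follows from Proposition \ref{facts about krn}(1)--(2), and simplifies to the same expression. I would present the pairwise argument as the main proof because it avoids this algebra.
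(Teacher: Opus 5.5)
Your proof is correct and is essentially the paper's argument. The paper also uses linearity over pairs and encodes your three cases through the last-selection times $T_i$ (with $T_i=0$ if card $i$ is never chosen): the pair $a<b$ is inverted iff $T_b>T_a$, the tie event $\{T_a=T_b=0\}$ has probability $((n-2)/n)^r$, and the remaining probability splits evenly between the two orders by the same exchange symmetry. Your alternative check via $\Ex[K_{r,n}(2n-K_{r,n}-1)]/4$ and Proposition \ref{facts about krn} also simplifies to the same expression.
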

\begin{proof}
   Let $r$ balls be thrown into $n$ labeled boxes uniformly at random. A pair $(i,j)$ is an inversion after iterated random-to-top shuffles if and only if card $j$ is moved to the top sometime after the last time card $i$ is. This corresponds to box $j$ receiving a ball sometime after the last time box $i$ receives a ball. Let $T_i$ be the last throw that box $i$ receives a ball, and set $T_i=0$ if box $i$ never receives a ball. By the reasoning above, we have
    \[I_{r,n}=_d\sum_{i<j} \1(T_j>T_i).\]
    Summing over all possible indicators, we have
    \[\binom{n}{2}=\sum_{i<j} \1(T_j>T_i)+\sum_{i<j} \1(T_j<T_i)+\sum_{i<j} \1(T_j=T_i=0).\]
    Note that $\Prob(T_i=T_j=0)=((n-2)/n)^r$. Taking expectations of both sides and observing that $\mathbb{P}(T_i>T_j)=\mathbb{P}(T_i<T_j)$ for any $i\neq j$ allows us to solve for $\Ex[I_{r,n}]$.
\end{proof}
We note that the expected number of inversions is found in Section 8.4 of Diaconis and Fulman \cite{MathofShufflingCards} using spectral theory, but it appears that our combinatorial proof is new. 

\begin{figure}[htbp]
\centering

\includegraphics[height=0.25\textheight,keepaspectratio]{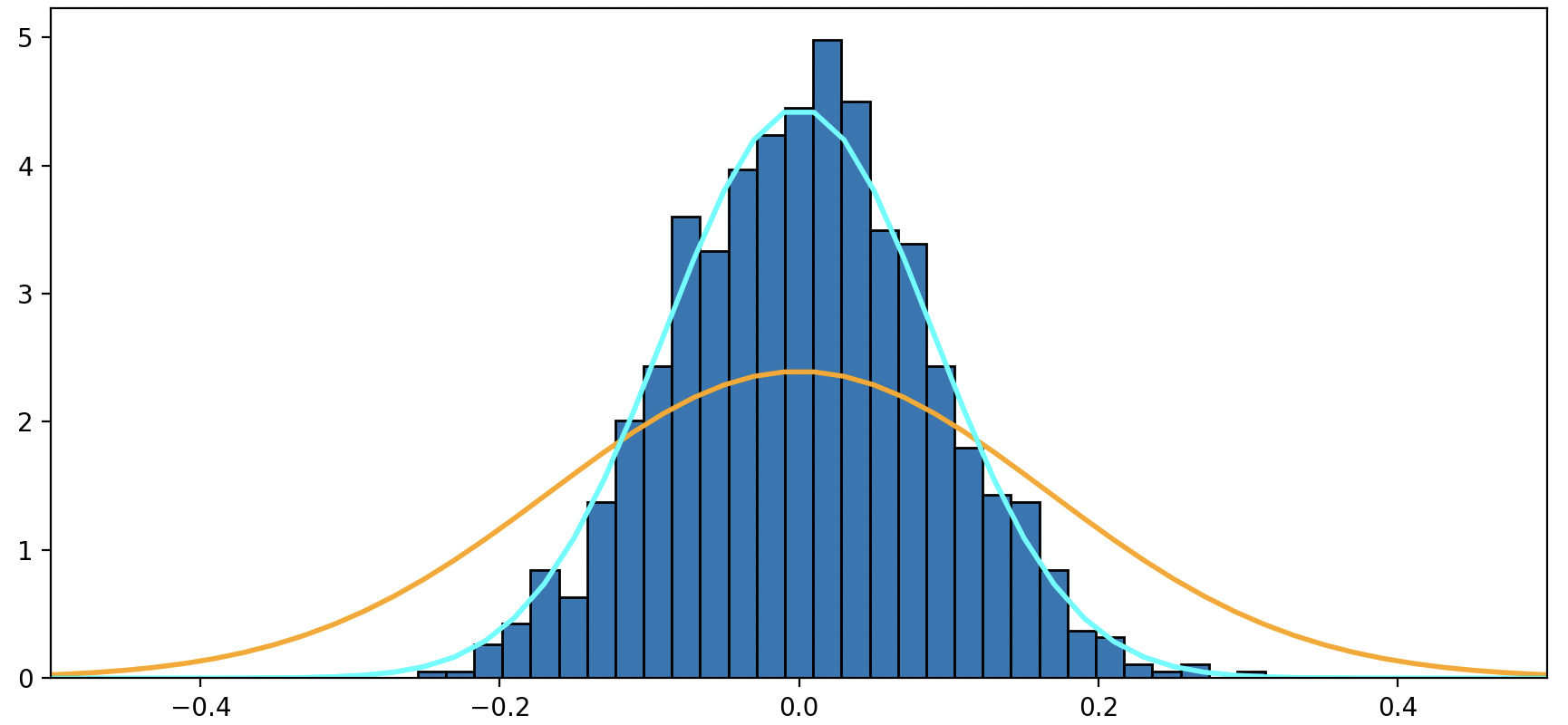}

\vspace{0.2cm}

\includegraphics[height=0.25\textheight,keepaspectratio,width=0.85\textwidth]{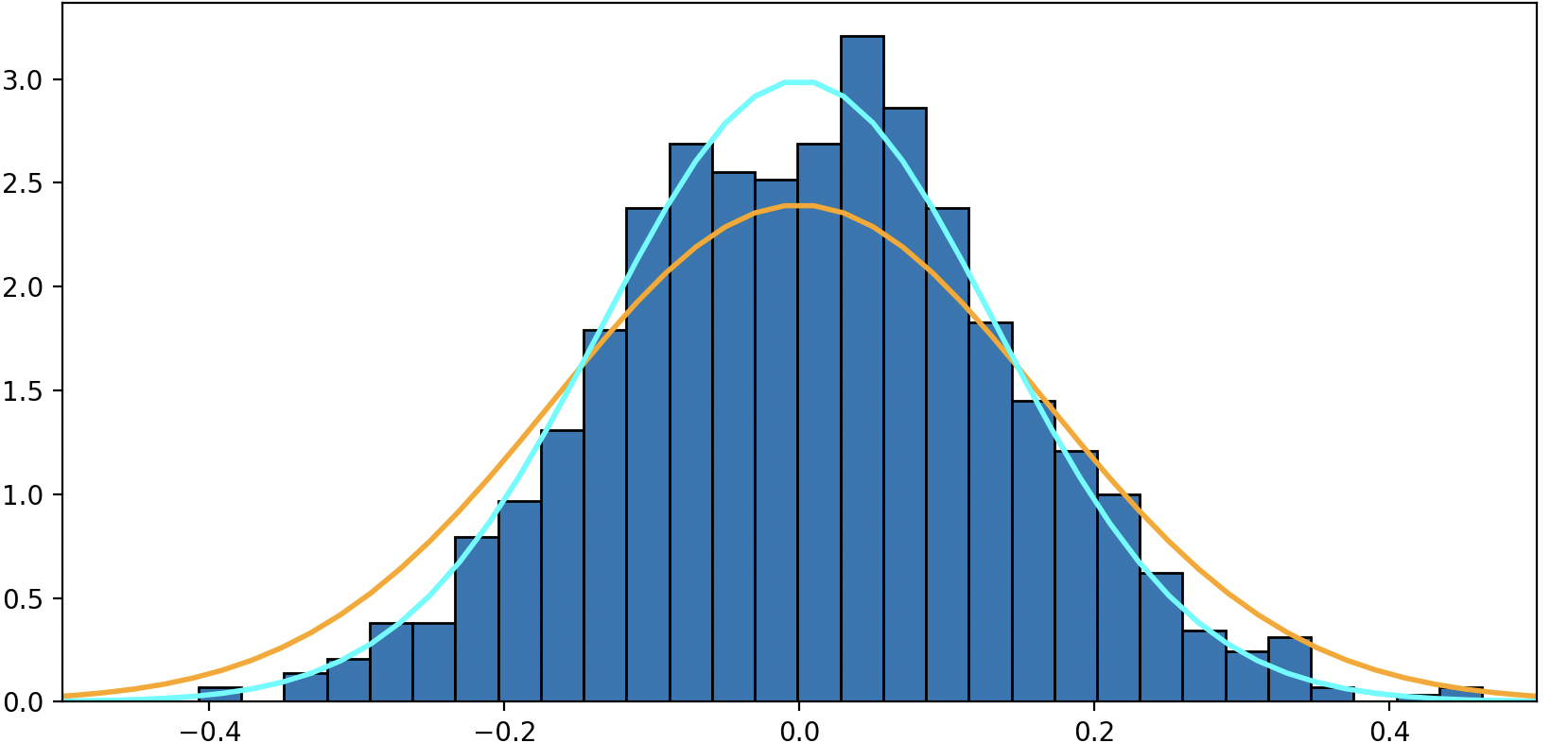}

\vspace{0.2cm}

\includegraphics[height=0.25\textheight,keepaspectratio]{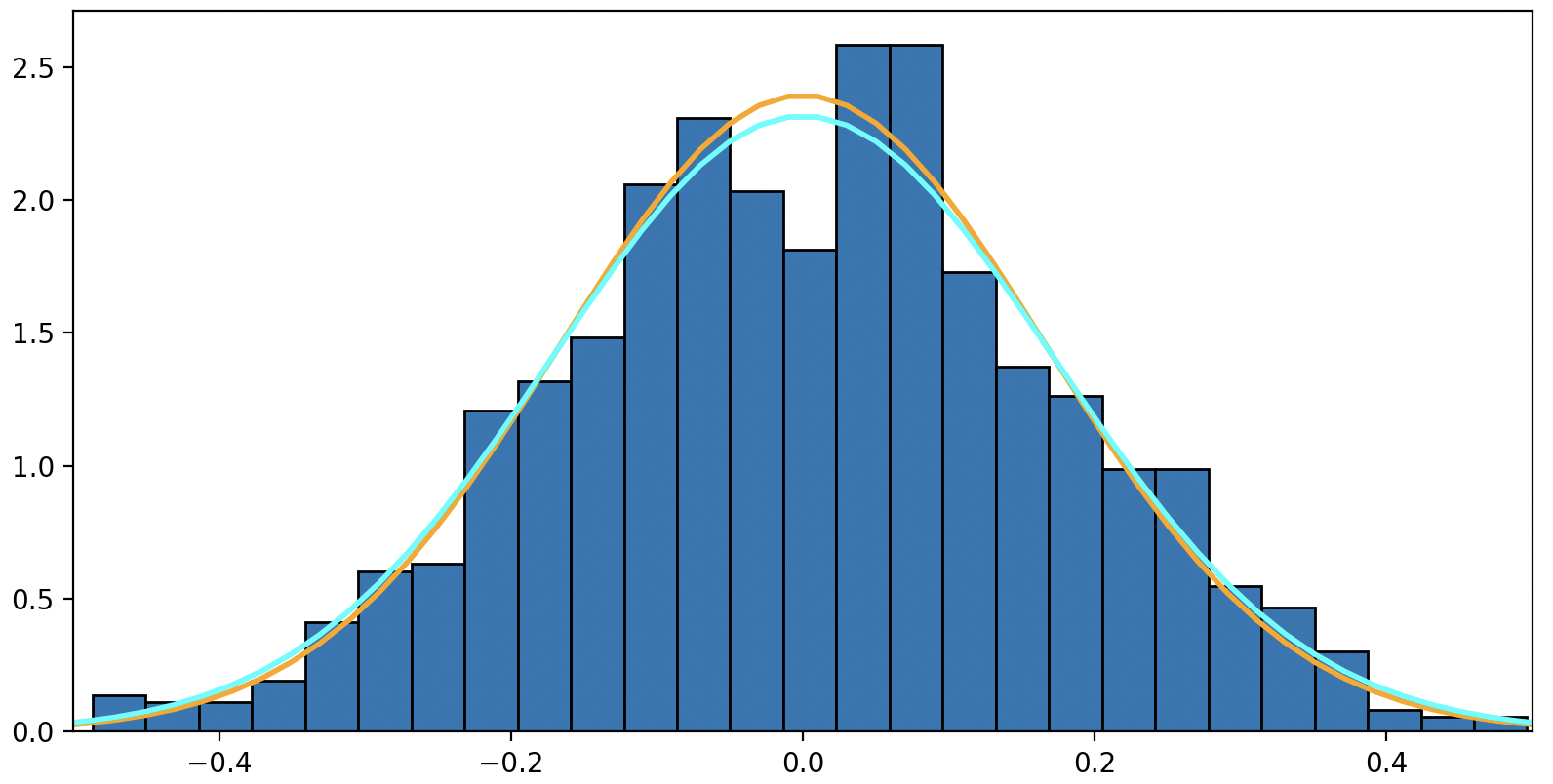}

\caption{Histograms of $1000$ trials of the number of inversions of $n=1000$ card decks after iterated random-to-top shuffles centered by $n(1-e^{-2r/n})$ and scaled by $n^{-3/2}$, normalized to form probability distributions. Top: $r=100$ shuffles. Middle: $r=250$ shuffles. Bottom: $r=1000$ shuffles. The cyan curves are the predicted $r=cn$ densities from Theorem $\ref{inversions main}$. The orange curves are $\mathcal{N}(0,1/36)$ densities.}
\label{fig:inversions-comparison}
\end{figure}

Analogously to our consideration of descents in the $r=cn$ case, consider the decomposition of inversions from Theorem $\ref{decomposition of inversions}$, where we have
\[I_{cn,n}=_d\sum_{i=1}^{K_{cn,n}} R_i.\]
Here, the $R_i$ are independent discrete uniform random variables supported on $\{0,1,\ldots,$ $n-i\}$. If we replace $K_{cn,n}$ with a deterministic index $\lfloor a_n n\rfloor $ with $a_n\to a\in(0,1)$, one would expect that by the independence of the $R_i$, we would get asymptotic normality. Indeed, this is the case.

\begin{proposition}
\label{deterministic convergence inversions}
    Let $R_i$ be independent discrete uniform random variables on $\{0,1,$ $\ldots,n-i\}$. Let $a_n$ be a deterministic sequence with $a_n\to a\in(0,1)$. Set 
    \[f(a_n)=\sum_{i=1}^{\lfloor a_n n\rfloor} (R_i-(n-i)/2).\] 
    We have
    \[\frac{f(a_n)}{n^{3/2}}\Rightarrow_d\,\mathcal{N}\left(0,\frac{1-(1-a)^3}{36}\right).\]
\end{proposition}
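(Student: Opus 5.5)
We plan to prove this with the Lindeberg--Feller central limit theorem for triangular arrays, exactly as in the uniform case of Proposition~\ref{uniform case convergences}. Set $m_n=\lfloor a_n n\rfloor$ and, for $1\le i\le m_n$, let
\[
X_{n,i}=\frac{R_i-(n-i)/2}{n^{3/2}}.
\]
For each fixed $n$ these are independent and mean zero, since $R_i$ is discrete uniform on $\{0,\dots,n-i\}$ and so has mean $(n-i)/2$. Then $f(a_n)/n^{3/2}=\sum_{i=1}^{m_n}X_{n,i}$, and it suffices to verify the two Lindeberg--Feller hypotheses in \cite[Theorem 3.4.10]{durrett2019probability}.

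\textbf{Variance.} A discrete uniform variable on $\{0,\dots,N\}$ has variance $((N+1)^2-1)/12$. Hence
\[
\sum_{i=1}^{m_n}\Var X_{n,i}=\frac{1}{12n^3}\sum_{i=1}^{m_n}\bigl((n-i+1)^2-1\bigr).
\]
Writing $i=xn$, this is a Riemann sum:
\[
\frac{1}{12}\cdot\frac1n\sum_{i=1}^{m_n}\Bigl(1-\frac{i}{n}\Bigr)^2+O(n^{-1}).
\]
Because $m_n/n\to a$, it converges to
\[
\frac{1}{12}\int_0^a(1-x)^2\,dx=\frac{1-(1-a)^3}{36}.
\]
To make this rigorous, we compare with the exact sum $\sum_{k=n-m_n+1}^{n}k^2$ using the closed form $\sum_{k\le N}k^2=N^3/3+O(N^2)$. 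This gives
\[
\frac{n^3-(n-m_n)^3}{3}+O(n^2),
\]
and after dividing by $12n^3$ the limit is as claimed.

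\textbf{Lindeberg condition.} This is essentially free. Every summand is uniformly bounded: $|X_{n,i}|\le (n-i)/(2n^{3/2})\le 1/(2\sqrt n)$. So for any fixed $\varepsilon>0$, once $n>1/(4\varepsilon^2)$ the events $\{|X_{n,i}|>\varepsilon\}$ are empty. The Lindeberg sum $\sum_i\Ex[X_{n,i}^2\1(|X_{n,i}|>\varepsilon)]$ therefore vanishes identically for large $n$.

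With both conditions verified, Lindeberg--Feller gives the stated normal limit. The only step needing any care is the variance asymptotic, and in particular checking that using $a_n$ instead of $a$ changes nothing. This is handled by continuity of $t\mapsto 1-(1-t)^3$ together with $m_n/n\to a$. Degenerate boundary cases are excluded by the hypothesis $a\in(0,1)$, so the limiting variance is strictly positive.
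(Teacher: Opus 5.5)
Your proposal is correct and takes the same route as the paper: compute $\sum_i \Var R_i \sim n^3\,(1-(1-a)^3)/36$ by comparing the sum with an integral, note that the centered summands are $O(n)$ and hence $O(n^{-1/2})$ after dividing by $n^{3/2}$, and conclude with the Lindeberg--Feller theorem for triangular arrays. Your version is slightly more careful than the paper's, since you give explicit $O(n^{-1})$ error terms in the variance and show that the Lindeberg sum vanishes identically for large $n$.
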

\begin{proof}
    Computing the variance, we have
    \begin{equation*}
        \begin{aligned}
        \Var f(a_n) &=\sum_{i=1}^{\lfloor a_n n\rfloor}\Var R_i\\
        &=\sum_{i=1}^{\lfloor a_n n\rfloor}\frac{(n-i)(n-i+2)}{12} \\
        &\sim\frac{1}{12}\int_{(1-a_n)n}^n x^2\,dx\sim n^3\cdot\frac{1-(1-a)^3}{36}
        \end{aligned}
    \end{equation*}
    Let $W_i=(R_i-(n-i)/2)$. We note that $|W_i|\leq n$ a.s. for every $i$, so the Lindeberg condition is satisfied as $|W_i|/\sqrt{\Var{f(a_n)}}\to 0$. The central limit theorem for triangular arrays as in Chapter 3 of \cite{durrett2019probability} implies the result.
\end{proof}

Using Proposition $\ref{deterministic convergence inversions}$, we are able to prove a more general result which works, under certain assumptions, for a general indexing random variable. Then, we will specialize our result to the case where the statistic is indexed by the random variable $K_{cn,n}$.

\begin{theorem}
\label{general inversions}
    Let $R_i$ be the independent discrete uniform random variables from Proposition $\ref{deterministic convergence inversions}$. Let $K_n$ be a sequence of random variables which are independent of the $R_i$ and satisfy the following assumptions.
    \begin{enumerate}
        \item $K_n/n\to a\in (0,1)$ in probability.
        \item $n^{-1/2}(K_n-\Ex[K_n])\Rightarrow_d\mathcal{N}(0,\tau^2)$ for some fixed $\tau^2\in[0,\infty)$.
        \item $\Var K_n=O(n)$ and $\Ex [K_n]=an+o(\sqrt{n})$.
    \end{enumerate}
    Then, letting $T_n=\sum_{i=1}^{\lfloor K_n\rfloor } R_i$, we have
    \[\frac{T_n-\Ex T_n}{n^{3/2}}\Rightarrow_d\,\mathcal{N}(0,\sigma^2(a,\tau^2))\]
    where 
        \[\sigma^2(a,\tau^2)=\frac{1-(1-a)^3}{36}+
    \frac{(1-a)^2}{4}\,\tau^2.
    \]
\end{theorem}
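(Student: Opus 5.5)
We follow the template of the proof of Theorem \ref{descents technical} and split
\[
T_n-\Ex[T_n]=\underbrace{T_n-\Ex[T_n\mid K_n]}_{A_n}+\underbrace{\Ex[T_n\mid K_n]-\Ex[T_n]}_{B_n}.
\]
Write $a_n=n^{-1}\Ex[K_n]$, $W_i=R_i-(n-i)/2$, and let $f(a_n)$ be as in Proposition \ref{deterministic convergence inversions}. Since $K_n$ is independent of the $R_i$, we have $A_n=\sum_{i=1}^{\lfloor K_n\rfloor}W_i$. The plan is to show three things:
\begin{itemize}
\item[(i)] $n^{-3/2}(A_n-f(a_n))\to 0$ in probability;
\item[(ii)] $n^{-3/2}B_n=\frac{(1-a)}{2}\,n^{-1/2}(K_n-\Ex K_n)+o_p(1)$;
\item[(iii)] the two leading terms are asymptotically independent Gaussians.
\end{itemize}

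For (i), conditionally on $K_n$ the difference $A_n-f(a_n)$ is a sum of independent centered $W_i$ over indices between $\alpha_n=\min(a_nn,K_n)$ and $\beta_n=\max(a_nn,K_n)$. Since $\Var W_i\le n^2/12$,
\[
\Ex[(A_n-f(a_n))^2\mid K_n]\le \frac{n^2(|K_n-a_nn|+1)}{12}.
\]
Taking expectations and applying Cauchy--Schwarz with $\Var K_n=O(n)$ from Assumption (3) gives $\Ex[(A_n-f(a_n))^2]=O(n^{5/2})=o(n^3)$. Hence (i) holds in $L^2$, and by Proposition \ref{deterministic convergence inversions} together with Lemma \ref{slutsky ii}, $n^{-3/2}A_n\Rightarrow_d\mathcal N\bigl(0,(1-(1-a)^3)/36\bigr)$.

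For (ii), we compute explicitly. With $g(k)=\sum_{i=1}^{k}(n-i)/2=\frac{k(2n-k-1)}{4}$, we have $B_n=g(\lfloor K_n\rfloor)-\Ex g(\lfloor K_n\rfloor)$. We Taylor expand $g$ around $\Ex K_n=an+o(\sqrt n)$. The derivative is $g'(k)=(2n-2k-1)/4$, which at $k=an$ is $\frac{n(1-a)}{2}+O(1)$. The quadratic remainder is $-\tfrac14(K_n-\Ex K_n)^2$, which is $O_p(n)$ and has expectation $O(n)$, so after centering it contributes $O_p(n)=o_p(n^{3/2})$. The floor adds only $O(n)$ error. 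This gives
\[
n^{-3/2}B_n=\frac{1-a}{2}\cdot\frac{K_n-\Ex K_n}{\sqrt n}+o_p(1),
\]
which converges to $\mathcal N\bigl(0,(1-a)^2\tau^2/4\bigr)$ by Assumption (2).

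For (iii), the main obstacle is the joint convergence, because $A_n$ is itself defined through $K_n$. We bypass this by replacing $A_n$ with $f(a_n)$ using (i). The variable $f(a_n)$ is a deterministic-index function of the $R_i$ alone, so it is independent of $K_n$. The pair $\bigl(n^{-3/2}f(a_n),\,n^{-1/2}(K_n-\Ex K_n)\bigr)$ therefore converges jointly to independent normals, for example by factorizing characteristic functions. The continuous mapping theorem applied to their sum, followed by Lemma \ref{slutsky ii} to absorb the $o_p(1)$ errors from (i) and (ii), yields a limiting variance equal to the sum $\frac{1-(1-a)^3}{36}+\frac{(1-a)^2}{4}\tau^2=\sigma^2(a,\tau^2)$.
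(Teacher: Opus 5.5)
Your proposal is correct and follows essentially the same route as the paper: the same split $T_n-\Ex[T_n]=A_n+B_n$, the same conditional second-moment comparison of $A_n$ with $f(a_n)$ via Cauchy--Schwarz and $\Var K_n=O(n)$, the same exact quadratic Taylor expansion of $\Ex[T_n\mid K_n]$ (your $g$ is the paper's $\mu_n$), and the same use of the independence of $f(a_n)$ and $K_n$ to add the two Gaussian limits. You are slightly more careful than the paper in two places: you account for the floor $\lfloor K_n\rfloor$, and you justify the joint convergence explicitly by factorizing characteristic functions.
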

\begin{proof}
    The proof proceeds in a similar manner to the proof of Theorem $\ref{descents technical}$. Let $a_n=n^{-1} \cdot \Ex K_n$ so that $a_n\to a$ by Assumption (3). We begin by writing
    \begin{equation}
    \label{an and bn inversions}
    T_n-\Ex [T_n]=\underbrace{T_n-\Ex [T_n\mid K_n]}_{:=\,A_n}+\underbrace{\Ex [T_n\mid K_n]-\Ex [T_n]}_{:=\, B_n}
    \end{equation}
    Let $f(a_n)$ be as in Proposition $\ref{deterministic convergence inversions}$. We will show that $n^{-3/2}\cdot |A_n-f(a_n)|\to 0$ in probability. Let $\alpha_n=\min(a_n n,K_n)$, and set $\beta_n=\max(a_n n,K_n)$. We have 
    \[\Ex [(A_n-f(a_n))^2\mid K_n]=\sum_{i=\alpha_n+1}^{\beta_n}\mathrm{Var}(R_i)
    \leq \frac{n^2}{12}\,|K_n-a_nn|\]
    almost surely. Taking expectations and applying both the Cauchy-Schwarz inequality and Assumption (3) gives
    \[n^{-3}\cdot \Ex [(A_n-f(a_n))^2]\leq (12n)^{-1}\sqrt{\Ex (K_n-a_n n)^2}
    =O(n^{-1/2}).\]
    We conclude that $n^{-3/2}\cdot |A_n-f(a_n)|\to 0$ in $L^2$ and hence in probability.
    
    Now we turn our attention to 
    $B_n:=\Ex [T_n\mid K_n]-\Ex [T_n]$. For any constant $M$, let $\mu_n(M)=\Ex [T_n\mid K_n=M]=M(2n-M-1)/4$. Let $\Delta_n=K_n-a_n n$. The Taylor expansion of $\mu_n(K_n)$ is exact since $\mu_n(M)$ is quadratic. Hence,
    \[\mu_n(K_n)=\mu_n(a_n n)+\mu_n'(a_n n)\Delta_n-\frac14\Delta_n^2.\]
    Since $E[B_n]=0$, where $B_n$ is defined in Equation $\eqref{an and bn inversions}$, we have
    \[B_n=\mu_n'(a_n n)\Delta_n-\frac14\bigl(\Delta_n^2-\mathbb E\Delta_n^2\bigr).\]
    By Assumptions (2) and (3), we have $\Delta_n=O_p(\sqrt{n})$. Therefore $n^{-3/2}(\Delta_n^2-\Ex [\Delta_n^2])\to 0$ in probability.
    In addition, we have
    \[
    \frac{\mu_n'(a_n n)}{n}
    =\frac{2n-1}{4n}-\frac{a_n n}{2n}\longrightarrow\frac{1-a}{2}.
    \]
    Therefore, letting $Z\sim\mathcal{N}(0,\tau^2)$ be the distributional limit from Assumption (2), we deduce that
    \begin{equation}
    \label{certain convergence inversions}
        \frac{B_n}{n^{3/2}}
        =
        \Bigl(\frac{\mu_n'(a_n n)}{n}\Bigr)\cdot \frac{\Delta_n}{\sqrt n}
        +o_p(1)
        \Rightarrow_d\, \frac{1-a}{2}\,Z=_d\,\mathcal{N}\left(0,\frac{(1-a)^2\cdot\tau^2}{4}\right)
    \end{equation}
    where we used Assumptions (1) and (2). Now, notice that $B_n$ and $f(a_n)$ are independent because $K_n$ and the random variables $R_i$ are independent. We conclude that
    \[\frac{T_n-\mathbb \Ex T_n}{n^{3/2}}
        =
        \frac{A_n}{n^{3/2}}+\frac{B_n}{n^{3/2}}
        =
        \frac{f(a_n)}{n^{3/2}}+\frac{B_n}{n^{3/2}}+o_p(1)
        \Rightarrow_d\, \mathcal{N}\Bigl(0,\frac{1-(1-a)^3}{36}+\frac{(1-a)^2}{4}\tau^2\Bigr)
        \]
    by Proposition $\ref{deterministic convergence inversions}$ and Equation $\eqref{certain convergence inversions}$.
\end{proof}
We are now ready to prove the central limit theorem for inversions. In particular, the critical regime is a special case of Theorem $\ref{general inversions}$.
\begin{proof}[Proof of Theorem $\ref{inversions main}$]
    We have that the $r=cn$ shuffles case follows from Theorem $\ref{general inversions}$ applied to the random variables $K_n:=K_{cn,n}$. Note that the random variables $K_{cn,n}$ satisfy all three assumptions in Theorem $\ref{general inversions}$ by Proposition $\ref{facts about krn}$. The last step is to compute the corresponding variance $\sigma^2(a,\tau^2)$ from Theorem $\ref{general inversions}$. By Proposition $\ref{facts about krn}$, we set $a=1-e^{-c}$ and $\tau^2=e^{-c}-(1+c)e^{-2c}$. We have

    \begin{equation*}
        \begin{aligned}
            \sigma^2(1-e^{-c},\,e^{-c}-(1+c)e^{-2c}) &=\frac{1-e^{-3c}}{36}+\frac{e^{-2c}}{4}\left(e^{-c}-(1+c)e^{-2c}\right)\\
            &=\frac{1+8e^{-3c}-9(1+c)e^{-4c}}{36}.
        \end{aligned}
    \end{equation*}
    
    We next address the mixed regime. Let the random variables $R_i$ be independent discrete uniform, each supported on $\{0,1,2,\ldots,n-i\}$. By Theorem $\ref{decomposition of inversions}$, we can rewrite
    \[I_{r,n}=_d\,\sum_{i=1}^n R_i-\underbrace{\sum_{i=K_{r,n}+1}^n R_i}_{:=\,G_{r,n}}.\]
    We will find conditions on $r$ to make $n^{-3/2}\cdot G_{r,n}\to 0$ in probability. We have that
    \begin{equation}
    \label{cdl exp of gnr}
        \begin{aligned}
         \Ex[G_{r,n}\mid K_{r,n}]&=\sum_{i=K_{r,n}+1}^n \frac{n-i}{2}\\
        &=\frac{(n-K_{r,n})(n-K_{r,n}-1)}{4}.
        \end{aligned}
    \end{equation}
    By the law of iterated expectations, Equation $\eqref{cdl exp of gnr}$, and Proposition $\ref{facts about krn}$, we have that
    \[n^{-3/2}\cdot \Ex[G_{r,n}]=\frac{n-1}{4\sqrt{n}}\left(1-\frac{2}{n}\right)^r\leq \frac{\sqrt{n}\cdot\exp(-2r/n)}{4}.\]
    This implies that $n^{-3/2}\cdot G_{r,n}$ converges to zero in $L^1$, and therefore in probability, when there exists a sequence $c_n\to\infty$ such that $r\gtrsim (n\log n)/4 +c_n n$. By Theorem $\ref{slutsky}$, under these assumptions on $r$, we have that $I_{r,n}$ and the number of inversions of a uniformly random permutation have the same limiting distribution. The convergence in this case follows after applying Proposition $\ref{uniform case convergences}$. This concludes the proof of Theorem $\ref{inversions main}$.

\end{proof}
\section{Future Directions}
\label{Section 8}
We conclude by highlighting several directions for future research. To start, we recall that the expected value of the number of $i$-cycles of $r$ iterated top-to-random shuffles of an $n$-card deck, when $i\geq 2$, is equal to $i^{-1}\cdot (1-(1-i/n))^r$. This was found by Richard Stanley in unpublished work using the Robinson-Schensted-Knuth (RSK) correspondence and symmetric function theory, as summarized in Section 8.4 of Diaconis and Fulman \cite{MathofShufflingCards}. With representation-theoretic methods, Dominic Arcona and the author have recovered this expected value when $i=2$ (i.e. the expected number of transpositions) in unpublished work. In a similar direction, Arcona \cite{arcona} and Fulman \cite{fulmanIcycles} both used representation theory to obtain the limiting distributions of cycle counts after iterated star transposition shuffles and iterated random $i$-cycle shuffles, respectively. This suggests that it would be of interest to establish a (joint) central limit theorem for the cycle counts of iterated random-to-top shuffles. 

From the standpoint of refining our results, it would be very interesting to obtain convergence rates in Theorem  $\ref{fixed points main}$, Theorem $\ref{descents main}$, and Theorem $\ref{inversions main}$. Stein's method, as surveyed by Ross \cite{Ross}, is a useful way to analyze rates of convergence for statistics; however, it is not immediately clear how to adapt it to the setting of random-to-top shuffles. One could also analyze the $n\ll r\ll (n\log n)/2$ regime for descents and the analogous $n\ll r \ll (n\log n)/4$ regime for inversions of iterated random-to-top shuffles. It may be the case that both statistics mix in $\omega(n)$ shuffles.

Examining the statistics of biased types of shuffling is another intriguing direction. The mixing time of biased random-to-top shuffling, which is also known as the Tsetlin Library, has been studied in some special cases \cite{biasedRTT}. The Tsetlin library also has fascinating enumerative properties; see Chatterjee, Diaconis, and Kim \cite{ChatterjeeDiaconisKim}. A natural extension would be to consider fixed points, inversions, and descents for iterated shuffles of this kind. 

Finally, we note that random-to-top shuffling and sampling without replacement are intrinsically connected. This is because the stationary distribution of the Tsetlin Library is the Luce distribution on permutations, which is a type of biased sampling without replacement. The Luce model has seen interest in the past year with a central limit theorem for the number of inversions shown in \cite{BorgaChatterjeeDiaconis}. Importantly, determining the limiting distribution(s) of the number of fixed points in the Luce model is an open question of the authors in \cite{BorgaChatterjeeDiaconis}. We would be delighted if the techniques in this paper could be used to make any progress in these areas.

\section*{AI Disclosure}
The author acknowledges that both ChatGPT 5.2-5.3 and Google Gemini were used under licenses from the University of Southern California (USC) to assist with the following tasks on both the first version and the current version of this article.
\begin{enumerate}
    \item Write Python code for the simulations in Figure $\ref{fig: fixed points}$, Figure $\ref{fig:descents-comparison}$, and Figure $\ref{fig:inversions-comparison}$.
    \item Test the distributional equalities in Section $\ref{Section 4}$ and test the combinatorial proof of Equation $\eqref{expected fixed points Pehlivan}$, both by simulating random-to-top shuffles.
    \item Search for relevant literature.
    \item Identify typos and grammar mistakes in previous drafts of this paper.
\end{enumerate}
Readers interested in viewing the corresponding conversations with each AI tool are kindly asked to contact the author.
\bibliographystyle{amsplain}
\bibliography{AC}

@book{durrett2019probability,
  title     = {Probability: Theory and Examples},
  author    = {Durrett, R.},
  edition   = {5},
  year      = {2019},
  publisher = {Cambridge University Press},
  address   = {Cambridge}
}

@article{m-dependentclt-hoeffding,
title = {The central limit theorem for dependent random variables},
author = {Hoeffding, W. and Robbins, H.},
journal = {Duke Math. Journal},
year = {1948},
volume = {15},
pages = {773-780}
}

@book{MathofShufflingCards,
author = {Diaconis, P. and Fulman, J.},
title = {The Mathematics of Shuffling Cards},
publisher = {AMS},
year = {2023},
}

@article{Margolius,
author = {Margolius, B.},
title = {Permutations with Inversions},
journal = {Journal of Integer Sequences},
volume = {4},
year = {2001}
}

@article{ArratiaTavare1992,
  title        = {The Cycle Structure of Random Permutations},
  author       = {Arratia, R. and Tavar{\'e}, S.},
  journal      = {Annals of Probability},
  volume       = {20},
  number       = {3},
  pages        = {1567--1591},
  year         = {1992},
  doi          = {10.1214/aop/1176989707}
}

@article{ChatterjeeDiaconisDescents,
author = {Chatterjee, S. and Diaconis, P.},
title = {A central limit theorem for a new statistic on permutations},
journal = {Indian Journal of Pure and Applied Math.},
year = {2018},
pages = {561-573},
volume = {48},
}

@article{FulmanDescents,
author = {Fulman, J.},
title = {Stein’s method and non-reversible Markov chains},
journal = {IMS Lecture Notes Monogr. Ser.},
year = {2004},
pages = {66-74}
}

@book{AsymptoticStatistics,
author = {van der Vaart, A.W.},
title = {Asymptotic Statistics},
publisher = {Cambridge University Press},
year = {1998}}

@article{Weiss1958Limiting,
  author       = {Weiss, I.},
  title        = {Limiting Distributions in Some Occupancy Problems},
  journal      = {The Annals of Mathematical Statistics},
  year         = {1958},
  volume       = {29},
  number       = {3},
  pages        = {878-884},
  month        = {September},
  doi          = {10.1214/aoms/1177706544},
}

@article{Ferrante,
author = {Ferrante, M. and Saltalamacchia, M.},
title = {The Coupon Collector's Problem},
journal = {Materials Matem\`atics},
volume = {2014},
}

@article{Pehlivan,
author = {Pehlivan, L.},
title = {On top to random shuffles, no feedback card guessing, and fixed points of permutations},
year = {2009},
note = {PhD Thesis}
}

@misc{fulmanIcycles,
author = {Fulman, J.},
title = {Fixed points of non-uniform permutations and representation theory of the symmetric group},
note = {arXiv preprint},
year = {2024},
howpublished = {\url{https://arxiv.org/pdf/2406.12139}}
}

@misc{arcona,
author = {Arcona, D.},
title = {Representation theory and cycle statistics for random walks on the symmetric group},
note = {arXiv preprint},
year = {2025},
howpublished = {\url{https://arxiv.org/pdf/2512.13969}},
}

@article{Ross,
author = {Ross, N.},
title = {Fundamentals of Stein's Method},
journal = {Probability Surveys},
volume = {8},
year = {2011},
pages = {210-293}}

@article{ChatterjeeDiaconisKim,
author = {Chatterjee, S. and Diaconis, P. and Kim, G.},
title = {Enumerative theory for the Tsetlin library},
journal = {Journal of Algebra},
volume = {655},
year = {2024},
pages = {139-162}}

@article{biasedRTT,
author = {Jonasson, J.},
title = {Biased random-to-top shuffling},
journal = {Ann. Appl. Prob.},
year = {2006},
volume = {16},
pages = {1034-1058}}

@misc{BorgaChatterjeeDiaconis,
author = {Borga, J. and Diaconis, P. and Chatterjee, S.},
title = {Permuton and local limits for the Luce model},
year = {2025},
note = {arXiv preprint},
howpublished = {\url{https://arxiv.org/pdf/2509.07729}}}

@article{KimLee,
    title = {Central limit theorem for descents in conjugacy classes of $S_n$},
    author = {Kim, G. and Lee, S.},
    year = {2020},
    journal = {Journal of Combinatorial Theory, Series A},
    volume = {169}

}

@article{LiuYin,
    title = {Descents and Flag Major Index on Conjugacy
Classes of Colored Permutation Groups Without
Short Cycles},
    author = {Liu, K. and Yin, M.},
    journal = {Electronic Journal of Combinatorics},
    year = {2025},
    volume = {32},
    pages = {3-47}
    }

@article{FulmanDescents1998,
    title = {The Distribution of Descents in Fixed Conjugacy Classes of the Symmetric Groups},
    author = {Fulman, J.},
    journal = {Journal of Combinatorial Theory, Series A},
    year = {1998},
    pages = {171-180},
    volume = {84}}

@misc{loth2023permutationstatisticsconjugacyclasses,
      title={Permutation Statistics in Conjugacy Classes of the Symmetric Group}, 
      author={Loth, J.C. and Levet, M. and Liu, K. and Stucky, E.N. and Sundaram, S. and Yin, M.},
      year={2023},
      note ={arXiv preprint},
      howpublished = {\url{https://arxiv.org/abs/2301.00898}}
}

@article{descentsmatchings,
    title = {Distribution of Descents in Matchings},
    author = {Kim, G.},
    journal = {Annals of Combinatorics},
    volume = {23},
    pages = {73-87},
    year = {2019}
}

@article{AthDiaconis,
title = {Functions of random walks on hyperplane arrangements},
pages = {410-437},
journal = {Adv. Appl. Math.},
year = {2010},
author = {Athanasiadis, C. and Diaconis, P.},
volume = {45}
}

@article{ShufflingStopping,
author = {Aldous, D. and Diaconis, P.},
title = {Shuffling cards and stopping times},
journal = {Amer. Math. Monthly},
pages = {333-348},
year = {1986},
volume = {93}}

@article{AnalysisTopToRandom,
author = {Diaconis, P. and Fill, J.A. and Pitman, J.},
title = {Analysis of Top to Random Shuffles},
journal = {Combinatorics, Probability and Computing},
volume = {1},
pages = {135-155},
year = {1992}
}

@article{BayerDiaconisRiffleShuffle,
title = {Trailing the Dovetail Shuffle to its Lair},
author = {Bayer, D. and Diaconis, P.},
journal = {Ann. Appl. Probab.},
year = {1992},
pages = {294-313},
volume = {2}
}

@article{Schramm,
title = {Compositions of random transpositions},
journal = {Israel Journal of Mathematics},
year = {2005},
pages = {221-243},
author = {Schramm, O.},
volume = {147}}
\end{document}